\theoremstyle{plain}
\newtheorem{Thm}{Theorem}[section]
\newtheorem{Prop}[Thm]{Proposition}
\newtheorem{Lem}[Thm]{Lemma}
\theoremstyle{definition}
\newtheorem{Rem}[Thm]{Remark}
\numberwithin{equation}{section}
\title{Semi-orthogonal decomposition and smoothing}
\author{Yujiro Kawamata}
\begin{document}
\maketitle

\tableofcontents

\begin{abstract}
We investigate the behavior of semi-orthogonal decompositions of bounded derived categories of singular varieties
under flat deformations to smooth varieties. 
We consider a $\mathbf Q$-Gorenstein smoothing of a surface with a quotient singularity, and 
prove that a pretilting sheaf, which is constructed from a non-commutative deformation of a divisorial sheaf and  
weakly generates a semi-orthogonal component of a bounded derived category, 
deforms to a direct sum of exceptional vector bundles which are mutually totally orthogonal.
\end{abstract}

%14J60, 14B07, 14E30.

%%%%%%%%%%%%%%%%%%%%%%%%%%%%%%%%%%%%%%
%%%%%%%%%%%%%%%%%%%%%%%%%%%%%%%%%%%%%%
%%%%%%%%%%%%%%%%%%%%%%%%%%%%%%%%%%%%%%
\section{Introduction}

The derived category of a smooth projective variety behaves nicely, but that of a singular projective variety $X$
does not.
For example, $\mathbb R\text{Hom}(A,B)$ may be unbounded for objects $A,B$ 
of a bounded derived category of coherent sheaves $D^b(X) = D^b(\text{coh}(X))$.
In order to study the latter case, we can use a resolution of singularities $f: \tilde X \to X$, and 
study $D^b(X)$ using $D^b(\tilde X)$.
In this paper, we try another way and consider a smoothing of $X$, a deformation 
to a smooth variety $Y$.

A coherent sheaf $F$ on a normal complete variety $X$ is said to be {\em pretilting} 
if all higher self-extensions vanish: $\text{Ext}^p(F,F) \cong 0$ for $p > 0$.  
If $F$ comes from a {\em non-commutative (NC) deformation} of a simple sheaf, then 
$F$ weakly generates a subcategory in $D^b(X)$ (in the sense explained in \S 2.1)
which is equivalent to a bounded derived category $D^b(R) = D^b(\text{mod}(R))$ of finitely generated right $R$-modules 
over a finite dimensional associative algebra $R = \text{End}(F)$, the parameter algebra of the NC deformation, 
and $D^b(X)$ has a corresponding semi-orthogonal decomposition (cf. \cite{NC multi}).  
We are interested in the behavior of $F, R$ and $D^b(X)$ 
when $X$ is deformed to a smooth projective variety $Y$.  

A pretilting object is a generalization of an {\em exceptional object}, where $R \cong k$ is the base field.
A certain important smooth projective variety such as a projective space has a full exceptional collection, 
i.e., the derived category $D^b(X)$ is generated by a sequence of exceptional objects 
$(e_1,\dots,e_m)$ which are semi-orthogonal: $\mathbb R\text{Hom}(e_i,e_j) \cong 0$ for $i > j$. 
But it seems that the derived category of a singular variety has never such a collection.
Instead they are sometimes semi-orthogonally decomposed into subcategories weakly generated by pretilting objects.
The endomorphism ring $R = \text{End}(F)$ 
of a pretilting object is an associative algebra which is not necessarily commutative.

We consider the case of a normal surface $X$ which has a {\em $\mathbf Q$-Gorenstein smoothing}.
It is a special type of deformation which we encounter in the minimal model program (\cite{KSB}).
We consider in this paper the behavior of derived categories under such deformations.
The derived category of a certain singular surface has a pretilting sheaf 
which arises as a versal non-commutative deformation of a divisorial sheaf, a reflexive sheaf of rank $1$.
We investigate the behavior of such a pretilting sheaf
under the $\mathbf Q$-Gorenstein smoothing, and
prove that it deforms to a direct sum of exceptional sheaves which are mutually totally orthogonal.

The main theorem of this paper is the following:

\begin{Thm}[Theorems \ref{Hacking'}, \ref{direct sum'}]
Let $X$ be a normal projective surface (variety of dimension $2$) 
such that $H^p(X,\mathcal{O}_X)= 0$ for $p > 0$.
Assume the following conditions:

(a) There is a quotient singularity $P \in X$ of type $\frac 1{r^2s}(1, ars-1)$ for positive integers $a,r,s$ such that 
$0 < a < r$ and $(r,a) = 1$.
Let $D_{P,1}$ and $D_{P.2}$ be coordinate divisors in an analytic neighborhood of $P$ corresponding to the
weights $1$ and $ars-1$.

(b) There exists a divisorial sheaf $A = \mathcal{O}_X(-D)$ on $X$ for a Weil divisor $D$ such that 
$A \cong \mathcal{O}(-D_{P,1})$ in an analytic neighborhood of $P$.
Moreover, either $D$ or $D - K_X$ is a Cartier divisor at each point other than $P$, 
where the choice of $D$ or $D-K_X$ depends on the point.

(c) There is a projective flat deformation $f: \mathcal X \to \Delta$ over a disk $\Delta$ such that $X \cong f^{-1}(0)$,  
$f^{-1}(t)$ is smooth for $t \ne 0$, and that $\mathcal X$ is $\mathbf Q$-Gorenstein at $P$, 
i.e., $f$ is a $\mathbf Q$-Gorenstein smoothing at $P$. 

Then, after replacing $\Delta$ by a smaller disk around $0$ and after a finite base change 
by taking roots of the coordinate $t$, 
there exist on $\mathcal X$ maximally Cohen-Macaulay sheaves $\mathcal E_1,\dots \mathcal E_s$ of rank $r$ 
as well as a coherent sheaf $\mathcal F$ of rank $r^2s$ 
which is locally free at $P$ and locally free or dual free at other points
(i.e., $\mathcal F$ is locally isomorphic to either $\mathcal{O}_{\mathcal X}^{\oplus r^2s}$ or 
$\omega_{\mathcal X}^{\oplus r^2s}$ at each point depending on the point), 
which satisfy the following conditions:

\begin{enumerate}
\item $\mathcal E_i \otimes \mathcal{O}_X \cong A^{\oplus r}$ for all $i$.

\item $E_i := \mathcal E_i \otimes \mathcal{O}_Y$ are exceptional vector bundles on $Y = f^{-1}(t)$ for $t \ne 0$, 
which are mutually orthogonal, i.e., 
\[
\mathbb R\text{Hom}_Y(E_i,E_j) := \bigoplus_{p=0}^2 \text{Ext}^p(E_i,E_j)[-p] \cong 0
\]
for $i \ne j$.

\item $F:= \mathcal F \otimes \mathcal{O}_X$ is constructed as a versal non-commutative deformation 
(see \S 3.1 or [14] for the definition of versal NC deformations) 
of the sheaf $A$ on $X$, and is a pretilting sheaf.

\item $\mathcal F \otimes \mathcal{O}_Y \cong \bigoplus_{i=1}^s E_i^{\oplus r}$.
In particular $\text{End}(\mathcal F \otimes \mathcal{O}_Y) \cong Mat(k,r)^{\times s}$.
\end{enumerate}
\end{Thm}

We note that the singularity of type $\frac 1{r^2s}(1, ars-1)$ appeared naturally when we applied 
the minimal model theory of $3$-folds to the degeneration of surfaces (\cite{KSB}).

$D^b(X)$ and $D^b(Y)$ have semi-orthogonal components
$\overline{\langle F \rangle} \cong D^b(R)$ with $R = \text{End}(F)$ and
$\langle E_i \rangle \cong D^b(k)$ for $1 \le i \le s$, respectively, which are related by the deformation
(see \S 2.1 for the notation).
We note that the associative algebra $R$ is calculated by \cite{KK} using \cite{HiP} and is called a 
{\em Kalck-Karmazyn algebra} (Theorem \ref{KKS}).

The assertions (1) and (2) are generalizations of a result of Hacking \cite{H} 
where the case $s = 1$ ({\em Wahl singularity}) is treated.
The sheaf $\mathcal E_1$ here is the dual of a reflexive sheaf
$\mathcal E$ in \cite{H} Theorem 1.1 which satisfies that the double dual of the restriction 
$(\mathcal E \otimes \mathcal O_X)^{**}$ 
is isomorphic to the dual of $A^{\oplus r}$.
We note that our theorem does not need to take a double dual on $X$.
It also gives a natural explanation of Hacking's exceptional vector bundles in terms of 
a semi-universal non-commutative deformation which is unique up to isomorphisms.
It is also remarkable that the reflexive sheaves $\mathcal E_i$ are mutually orthogonal on the generic fiber but 
reduced to the same sheaf on the special fiber. 

We explain the plan of this paper.
We start with the background material in \S 2 on $\mathbf Q$-Gorenstein smoothing, exceptional vector bundles, 
semi-orthogonal decompositions, pretilting objects, and a motivating example of a weighted projective plane
$\mathbf P(1,1,4)$.
In \S 3, we recall the theory of non-commutative deformations of divisorial sheaves on a surface 
with quotient singularities, and explain a semi-orthogonal decomposition of the derived category for a weighted projective plane 
by Karmazyn-Kuznetsov-Shinder \cite{KKS}. 
We prove the main theorem in \S 4 ($s=1$ case) and in \S5 (general case).
The mutual orthogonality of the $E_i$ is proved by using flops between different {\em crepant simultaneous partial 
resolutions}; the semi-orthogonality and flops imply the full orthogonality.
Then in \S 6, we consider weighted projective planes as an example.

After the first version of this paper is submitted to arXiv, the author was informed 
that a generalization of Hacking's results 
to higher Milnor numbers was already considered in \cite{Cho}, and 
the same results as (1) and (2) of the above theorem were already obtained there
except that taking the double dual was still needed in (1).
The proofs of the orthogonality in (2) are different in the sense that 
our argument uses flops of 3-folds and more geometric.
The author would like to thank Professor Yonghwa Cho for this information.

The author would also like to thank the referee for numerous and helpful suggestions for improving this paper.
The proof of the fullness in Theorem \ref{Pa1a2a3} is due to him.
He would also like to thank NCTS of National Taiwan University where 
the work was partly done while the author visited there.
This work is partly supported by JSPS Kakenhi 16H02141 and 21H00970.

We work over the base field $k = \mathbf C$.

%%%%%%%%%%%%%%%%%%%%%%%
%%%%%%%%%%%%%%%%%%%%%%%
%%%%%%%%%%%%%%%%%%%%%%%
\section{Background}

%%%%%%%%%%%%%%%%%%%%%%%
\subsection{Notation}

Let $X$ be a normal variety defined over $k = \mathbf C$.
$X$ is said to be {\em $\mathbf Q$-Gorenstein} if its canonical divisor $K_X$ is a $\mathbf Q$-Cartier divisor.
For a Weil divisor $D$ on a normal variety $X$, a reflexive sheaf $\mathcal O_X(D)$ of rank $1$ is defined, and 
is also called a {\em divisorial sheaf}.

A coherent sheaf $F$ on a Cohen-Macaulay variety
$X$ is said to be {\em locally free or dual free} if it is isomorphic locally at each point to 
either $\mathcal O_X^{\oplus r}$ or $\omega_X^{\oplus r}$ for an integer $r$, where 
$\omega_X = \mathcal O_X(K_X)$ is the canonical sheaf.
We allow a locally free or dual free sheaf to be locally free at some points and locally dual free at other points.
A divisorial sheaf $\mathcal O_X(D)$ is called {\em invertible or dual invertible} it is locally free or dual free.
It is equivalent to saying that $D$ or $K_X - D$ is a Cartier divisor at each point.
Here \lq\lq dual'' means Serre-Grothendieck dual. 

A {\em quotient singularity of type $\frac 1r(a_1,\dots,a_n)$} is a singularity which is analytically isomorphic 
to the one at $0$
of the quotient space $\mathbf C^n/\mathbf Z_r$ by the action 
\[
(x_1,\dots,x_n) \mapsto (\zeta^{a_1} x_1, \dots, \zeta^{a_n} x_n),
\] 
where $\zeta$ is a primitive $r$-th root of $1$.

%%%%%%%%%%%%%%%%%%%%%%%
\subsection{Derived categories}

Let $X$ be an algebraic variety over $k$ and let $R$ be an associative $k$-algebra.
We denote by $D^b(X) = D^b(\text{coh }X)$ (resp. $D(X) = D(\text{Qcoh }X)$) 
the bounded derived category of coherent sheaves (resp. unbounded derived category of quasi-coherent sheaves) on $X$.
We also denote by $D^b(R) = D^b(\text{mod }R)$ (resp. $D(R) = D(\text{Mod }R)$)
the bounded derived category of finitely generated right $R$-modules 
(resp. unbounded derived category of right $R$-modules).

Let $\mathcal T$ be a $k$-linear triangulated category, and let 
$S$ be a set consisting of some objects of $\mathcal T$.
We denote by ${}^{\perp}S$ and $S^{\perp}$ the {\em left and right orthogonal complements} defined by
\[
\begin{split}
&{}^{\perp}S = \{t \in \mathcal T \mid \text{Hom}(t,s[p]) = 0, \forall s \in S, \forall p \in \mathbf Z\}, \\
&S^{\perp} = \{t \in \mathcal T \mid \text{Hom}(s[p],t) = 0, \forall s \in S, \forall p \in \mathbf Z\}.
\end{split}
\]
They are triangulated full subcategories of $\mathcal T$.
We denote by $\langle S \rangle$ the smallest triangulated full subcategory of $\mathcal T$ containing $S$.
In this case, $\langle S \rangle$ is said to be {\em classically generated} by $S$.
We have ${}^{\perp}S = {}^{\perp}\langle S \rangle$ and $S^{\perp} = \langle S \rangle^{\perp}$.
Furthermore, we define 
\[
\overline{\langle S \rangle} = {}^{\perp}(S^{\perp}).
\]
In particular, we have $\mathcal T = \overline{\langle S \rangle}$ if and only if $S^{\perp} \cong 0$, i.e., 
$\text{Hom}(s,t[p]) = 0$ for all $s \in S$ and all $p \in \mathbf Z$ implies that $t \cong 0$.
In this case, we say that $\mathcal T$ is {\em weakly generated} by $S$ in this paper. 
We write $\overline{\langle S \rangle}^{\mathcal T}$ for $\overline{\langle S \rangle}$ if we need to 
specify where the closure is taken. 

\begin{Lem}
$\overline{\langle S \rangle}$ is weakly generated by $S$ in the above sense.
\end{Lem}

\begin{proof}
Let $t \in \overline{\langle S \rangle}$ be an object such that $t \in S^{\perp}$.
Then $\text{Hom}(t,t) \cong 0$, hence $t \cong 0$.
\end{proof}

Let $\mathcal T$ be a $k$-linear triangulated category.
$\mathcal T$ is said to have a {\em semi-orthogonal decomposition} to triangulated full subcategories 
$\mathcal A$ and $\mathcal B$, denoted as $\mathcal T = \langle \mathcal A, \mathcal B \rangle$, if the following hold (\cite{B}): 

(1) $\text{Hom}_{\mathcal T}(b,a) = 0$ for all $a \in \mathcal A$ and $b \in \mathcal B$.

(2) $\mathcal T$ coincides with the smallest triangulated subcategory containing $\mathcal A$ and $\mathcal B$. 

We write 
\[
\mathbb R\text{Hom}_{\mathcal T}(A,B) := \bigoplus_p \text{Hom}_{\mathcal T}(A,B[p])[-p]
\]
for $A,B \in \mathcal T$.
An object $A \in \mathcal T$ is said to be an {\em exceptional object} if $\mathbb R\text{Hom}_{\mathcal T}(A,A) \cong k$.
A sequence of exceptional objects $(A_1,\dots,A_m)$ is called an {\em exceptional collection} if 
the semi-orthogonality condition $\mathbb R\text{Hom}(A_i,A_j) \cong 0$ holds for $i > j$.
It is said to be {\em full} if the $A_i$ classically generate $\mathcal T$. 

We do not require an exceptional object in $D^b(X)$ to be a perfect complex.
Therefore we do not have necessarily a semi-orthogonal decomposition of the bounded derived category arising from an 
exceptional object $A$.
Indeed we do not have finite dimensionality of $\mathbb RHom(A,\bullet)$ or $\mathbb RHom(\bullet,A)$ in general.

%%%%%%%%%%%%%%%%%%%%%%%
\subsection{$\mathbf Q$-Gorenstein smoothing}

Terminal singularities and $\mathbf Q$-Gorenstein smoothings appear naturally in the minimal model program.
Morrison-Stevens \cite{MS} classified all $3$-dimensional terminal quotient singularities.
They are singularities of types $\frac 1r(1,-1,a)$ for integers $r,a$ such that $0 < a < r$ and $(r,a) = 1$.

Let $V$ be a quotient singularity of type $\frac 1r(1,-1,a)$, and let $X = \{(x,y,z) \in V; xy = z^{sr}\}$ be a Cartier divisor, 
where $x,y,z$ are semi-invariant coordinates on $V$ and $s$ is a positive integer. 
Then $X \cong \frac 1{sr^2}(1,asr-1)$ is again a quotient singularity with semi-invariant coordinates 
$u,v$ by the rule
$x = u^{sr}$, $y = v^{sr}$, $z = uv$.

As an application of the theory of minimal models, 
Koll\'ar-Shepherd-Barron \cite{KSB} considered {\em $\mathbf Q$-Gorenstein smoothing}, a flat deformation of 
a singularity such that the canonical divisor of the total space is a $\mathbf Q$-Cartier divisor.
The above $X$ has a $\mathbf Q$-Gorenstein smoothing defined by  
\[
X_t = \{xy = z^{sr} + t\} \subset \frac 1r(1,-1,a)
\]
(cf. \cite{KSB}, Proposition 3.10).
Wahl \cite{W} already earlier considered this kind of deformation in the case $s = 1$, because the  
Milnor fiber of the deformation is a rational homology ball in this case.
In general the Milnor number of this deformation is equal to $s-1$.

The semi-universal $\mathbf Q$-Gorenstein deformation of $X$ is described as 
\[
X_{t_0,\dots,t_{s-1}} = \{xy = z^{sr} + \sum_{i=0}^{s-1} t_iz^{ir} \} \subset \frac 1r(1,-1,a)
\]
if $r > 1$, because it is lifted to a deformation of the canonical cover (cf. \cite{KMM}), which is a hypersurface singularity. 

%%%%%%%%%%%%%%%%%%%%%%%
\subsection{Exceptional vector bundles on $\mathbf P^2$ and Del Pezzo surfaces}

\cite{DLP} classified all exceptional sheaves on $\mathbf P^2$, which are automatically locally free.
\cite{GR} and \cite{R} classified all full exceptional collections of vector bundles on $\mathbf P^2$:

\begin{Thm}[\cite{R} Theorem 3.2] 
Let $(A,B,C)$ be a full exceptional collection of vector bundles on $\mathbf P^2$.
Then $(a,b,c) = \text{rank }(A,B,C)$ satisfies a Markov equation
\[
a^2+b^2+c^2 = 3abc.
\]
Moreover any triple $(A,B,C)$ is obtained from the initial triple $(\mathcal{O}(-2),\mathcal{O}(-1),\mathcal{O})$
by left and right mutations $(A,B,C) \mapsto (A,C',B)$, $(B,A',C)$ defined below
\[
\begin{split}
&0 \to C' \to \text{Hom}(B,C) \otimes B \to C \to 0, \\
&0 \to A \to \text{Hom}(A,B)^* \otimes B \to A' \to 0.
\end{split}
\]
up to cyclic permutations $(A,B,C) \mapsto (C(-3),A,B)$, twisting by line bundles $(A(m),B(m),C(m))$, 
and taking duals $(C^*,B^*,A^*)$.
\end{Thm}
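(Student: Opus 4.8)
My plan is to separate the statement into its numerical half (the Markov equation) and its structural half (that every collection lies in the mutation orbit of $(\mathcal{O}(-2),\mathcal{O}(-1),\mathcal{O})$), and to argue them in that order; the first is essentially formal, the second is where the real content lies.

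\emph{The Markov equation.} First I would record that, each of $A,B,C$ being exceptional, $\text{Ext}^p(A,A)=\text{Ext}^p(B,B)=\text{Ext}^p(C,C)=0$ for $p\ne 0$, so $\chi(A,A)=\chi(B,B)=\chi(C,C)=1$; by Riemann--Roch on $\mathbf{P}^2$ this reads $2rc_2=(r-1)(r+1+d^2)$ for each of the three bundles (with $r=\text{rk}$, $d=\deg$, $c_2$ the second Chern class), which forces $\gcd(r,d)=1$. Since the collection is full, $v_1=\text{ch}(A)$, $v_2=\text{ch}(B)$, $v_3=\text{ch}(C)$ form a $\mathbf{Z}$-basis of $K_0(\mathbf{P}^2)\cong\mathbf{Z}^3$, and semi-orthogonality makes the Gram matrix $G=(\chi(v_i,v_j))$ unipotent upper triangular. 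Two features of Riemann--Roch on $\mathbf{P}^2$ then do the work: the antisymmetric part of the Euler form is $\chi(v,w)-\chi(w,v)=3(r_vd_w-r_wd_v)$, so the superdiagonal entries of $G$ are $\chi(v_i,v_j)=3m_{ij}$ with $m_{ij}:=r_id_j-r_jd_i$; and $\text{rk}(v)=\chi(v,[\mathcal{O}_P])$ together with $\chi([\mathcal{O}_P],[\mathcal{O}_P])=0$ for a point $P$. Writing $[\mathcal{O}_P]=\sum_i\alpha_i v_i$, surjectivity of $(\text{rk},\deg)\colon K_0(\mathbf{P}^2)\to\mathbf{Z}^2$ forces $\gcd(m_{12},m_{13},m_{23})=1$, so $[\mathcal{O}_P]$, being the primitive generator of the kernel of this map, satisfies $(\alpha_1,\alpha_2,\alpha_3)=\pm(m_{23},-m_{13},m_{12})$. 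Feeding this into
\[
0=\chi([\mathcal{O}_P],[\mathcal{O}_P])=\alpha_1^2+\alpha_2^2+\alpha_3^2+\sum_{i<j}\alpha_i\alpha_j\,\chi(v_i,v_j)
\]
collapses to $\alpha_1^2+\alpha_2^2+\alpha_3^2=\pm 3\alpha_1\alpha_2\alpha_3$, while $(\text{rk}(v_1),\text{rk}(v_2),\text{rk}(v_3))^{t}=G\,(\alpha_1,\alpha_2,\alpha_3)^{t}$ gives $a=\alpha_1$, $c=\alpha_3$, $b=\alpha_2\pm 3\alpha_1\alpha_3$. Since $a,b,c>0$ the signs are determined, and $(a,b,c)$ appears as a Vieta neighbour of the Markov triple $(\alpha_1,|\alpha_2|,\alpha_3)$; hence $a^2+b^2+c^2=3abc$.

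\emph{Reduction to the standard collection.} Here I would use the general fact that left and right mutations preserve the property of being a full exceptional collection, as do the helix shift $(A,B,C)\mapsto(C(-3),A,B)$ (note $C(-3)=C\otimes\omega_{\mathbf{P}^2}$), twists by line bundles, and Serre duality. What is special to $\mathbf{P}^2$, and what makes these operations keep us among \emph{vector-bundle} collections, is that an exceptional sheaf on $\mathbf{P}^2$ is locally free \cite{DLP}, and that for an exceptional pair of bundles $(E,F)$ exactly one of $\text{Hom}(E,F)$, $\text{Ext}^1(E,F)$, $\text{Ext}^2(E,F)$ is nonzero (Drezet; cf.\ \cite{GR}); this trichotomy guarantees that a suitably chosen mutation is again a shifted vector bundle and, by the computation of the previous paragraph, changes the rank triple precisely by a Markov--Vieta involution $(a,b,c)\mapsto(a,3ac-b,c)$ or a permutation of it. The plan is then a descent: in a Markov triple other than $(1,1,1)$, replacing the largest entry $m$ by $3(\text{product of the other two})-m$ produces the unique neighbour of strictly smaller norm, and this move is realized by an appropriate mutation, which therefore strictly decreases $a^2+b^2+c^2$. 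Iterating, we land on ranks $(1,1,1)$; a rank-$1$ exceptional bundle on $\mathbf{P}^2$ is a line bundle, and semi-orthogonality of a triple of line bundles forces it to be $(\mathcal{O}(n),\mathcal{O}(n+1),\mathcal{O}(n+2))$, a twist of the initial triple. Because every operation used is invertible, the original collection is obtained from $(\mathcal{O}(-2),\mathcal{O}(-1),\mathcal{O})$ by mutations, cyclic permutations, twists and duals.

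\emph{The main obstacle} is precisely this descent: verifying that at each stage one can perform a mutation that simultaneously stays within full exceptional collections of honest vector bundles and carries out the prescribed Markov-tree move on the ranks. This is where the structure theory of exceptional bundles on $\mathbf{P}^2$---their stability, the $\text{Ext}$-trichotomy, and the Drezet--Le Potier constraints relating rank, degree and slope---is genuinely needed; by contrast the $K_0$-computation of the first step is formal, and the Markov equation merely encodes that Vieta moves preserve Markov triples.
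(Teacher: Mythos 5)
This theorem is quoted in the paper as background from \cite{R} (Theorem 3.2) and no proof is given there, so there is no in-paper argument to compare against; I can only assess your proposal on its own terms. Your first half is a complete and correct proof of the Markov equation: the identities $\chi(v,w)-\chi(w,v)=3(r_vd_w-r_wd_v)$, $\mathrm{rk}(v)=\chi(v,[\mathcal{O}_P])$, $\chi([\mathcal{O}_P],[\mathcal{O}_P])=0$, the identification $(\alpha_1,\alpha_2,\alpha_3)=\pm(m_{23},-m_{13},m_{12})$ via primitivity of the kernel of $(\mathrm{rk},\deg)$, and the resulting relations $a=\alpha_1$, $c=\alpha_3$, $b=\alpha_2\pm3\alpha_1\alpha_3$ all check out, and positivity of the ranks does pin down the signs so that $(a,b,c)$ is the Vieta neighbour of $(\alpha_1,|\alpha_2|,\alpha_3)$. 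This is the standard $K$-theoretic derivation and it uses only fullness, exceptionality and semi-orthogonality, which is exactly the right level of generality.

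The second half, as you yourself flag, is an outline rather than a proof. The descent scheme (realize the norm-decreasing Vieta move by a mutation, possibly after a cyclic permutation to bring the largest rank into a mutable slot; terminate at ranks $(1,1,1)$; classify semi-orthogonal line-bundle triples) is the correct strategy and is exactly how \cite{GR} and \cite{R} proceed. But the load-bearing facts --- that exceptional sheaves on $\mathbf P^2$ are locally free and stable, that for an exceptional pair exactly one of $\mathrm{Hom}$, $\mathrm{Ext}^1$, $\mathrm{Ext}^2$ survives, and above all that the mutation realizing the prescribed Markov move produces (a shift of) an honest vector bundle so that the induction stays inside the class of bundle collections --- are cited, not established. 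Since that is precisely where the content of Rudakov's theorem lives, your proposal should be read as a correct reduction of the statement to those results of \cite{DLP} and \cite{GR}, not as a self-contained proof; for the purposes of this paper, where the theorem is itself invoked as a black box, that is an acceptable resolution, but be explicit that the trichotomy and the local-freeness of mutated objects are being imported.
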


We note that these exceptional collections are automatically {\em strong} in the sense that there are no higher Hom's between them.
The same holds for the Del Pezzo surfaces explained below. 

For example, we have  
\[
0 \to \mathcal{O}(-2) \to \mathcal{O}(-1)^{\oplus 3} \to \Omega^1(1) \to 0.
\]
All solutions of the Markov equation $a^2+b^2+c^2 = 3abc$ are obtained from the initial solution $(1,1,1)$
by left and right mutations up to permutations:
\[
(a,b,c) \mapsto \begin{cases} &(a,c',b), \quad c' = 3ab-c, \\
&(b,a',c), \quad a' = 3bc - a. \end{cases}
\]
They form a trivalent tree: 
\[
(1,1,1) \to (1,2,1) \to (1,5,2) \to (1,13,5), (5,29,2) \to \dots.
\]

\vskip 1pc

\cite{KN} generalized the above to full exceptional collections of vector bundles on smooth Del Pezzo surface $X$.
A {\em $3$-block exceptional collection} is an exceptional collection of vector bundles
$(A_1,\dots,A_{\alpha}; B_1, \dots, B_{\beta}; C_1, \dots, C_{\gamma})$
where members of the same block are mutually orthogonal and have the same rank: 
\[
\mathbb R\text{Hom}(A_i,A_{i'}) \cong \mathbb R\text{Hom}(B_j,B_{j'}) \cong \mathbb R\text{Hom}(C_k,C_{k'}) \cong 0
\]
and $\text{rank}(A_i,B_j,C_k) = (a,b,c)$ for any $i,j,k$. 
The triple $(a,b,c)$ satisfies a Markov equation 
\[
\alpha a^2 + \beta b^2 + \gamma c^2 = \lambda abc, \quad \lambda = \sqrt{K_X^2\alpha \beta \gamma}
\]
where $\alpha, \beta, \gamma, \lambda$ are positive integers depending on $X$ (see the table in \cite{KN} 3.5).

The left and right mutations of blocks 
\[
\begin{split}
&(A_1,\dots,A_{\alpha}; B_1, \dots, B_{\beta}; C_1, \dots, C_{\gamma}) 
\mapsto (A_1,\dots,A_{\alpha}; C'_1, \dots, C'_{\gamma}; B_1, \dots, B_{\beta}), \\ 
&(A_1,\dots,A_{\alpha}; B_1, \dots, B_{\beta}; C_1, \dots, C_{\gamma}) 
\mapsto (B_1,\dots,B_{\beta}; A'_1, \dots, A'_{\alpha}; C_1, \dots, C_{\gamma})
\end{split}
\]
are defined as follows:
\[
\begin{split}
&0 \to C'_k \to \bigoplus_j Hom(B_j,C_k) \otimes B_j \to C_k \to 0, \\
&0 \to A_i \to \bigoplus_j Hom(A_i,B_j)^* \otimes B_j \to A'_i \to 0.
\end{split}
\]
The ranks of the mutated bundles are given by
\[
c' = \frac {\lambda}{\gamma}ab - c, \quad a' = \frac {\lambda}{\alpha}bc - a.
\]

%%%%%%%%%%%%%%%%%%%%%%%
\subsection{$\mathbf Q$-Gorenstein smoothing and exceptional vector bundles}

Hacking \cite{H} proved that an exceptional vector bundle appears on a $\mathbf Q$-Gorenstein smoothing 
when $s = 1$:

\begin{Thm}[\cite{H} Theorem 1.1]
Let $X$ be a normal projective surface with a unique quotient singularity $P \in X$ of Wahl type 
$\frac 1{r^2}(1, ar-1)$. 
Let $f: \mathcal X \to \Delta$ be a one parameter flat 
deformation of $X = f^{-1}(0)$ such that the fibers $Y = f^{-1}(t)$ for $t \ne 0$ are smooth and the canonical
divisor $K_{\mathcal X}$ of the total space is a $\mathbf Q$-Cartier divisor.
Assume that $H_1(Y,\mathbf Z)$ is finite of order coprime to $r$, and that $H^2(Y, \mathcal O_Y) = 0$. 
Then, after a base change $\Delta' \to \Delta$ of degree $a$ and a shrinking of $\Delta'$ to a smaller disk, 
there exists a reflexive sheaf $\mathcal E$ on $\mathcal X' := \mathcal X \times_T T'$
such that

(a) $E_Y := \mathcal E \otimes \mathcal O_Y$ is an exceptional vector bundle of rank $r$ on $Y$, and is slope stable.

(b) $E_X := \mathcal E \otimes \mathcal O_X$ is a torsion-free sheaf on X such that its reflexive
hull $E_X^{**}$ is isomorphic to the direct sum of $r$ copies of a reflexive rank $1$ sheaf $A$, 
and the quotient $E_X^{**}/E_X$ is a torsion sheaf supported at $P \in X$.
\end{Thm}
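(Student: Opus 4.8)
The plan is to construct the pretilting sheaf $F$ as a versal non-commutative deformation of $A$, to extend it uniquely to a sheaf $\mathcal F$ over $\Delta$ by deformation theory, to produce the sheaves $\mathcal E_i$ from a crepant simultaneous partial resolution of the $\mathbf Q$-Gorenstein smoothing, and to deduce exceptionality and orthogonality of the $E_i$ from flops on that resolution together with the block structure of $\text{End}(\mathcal F|_Y)$. Since $X$ is normal with $H^0(X,\mathcal O_X)=k$, the divisorial sheaf $A$ is simple and (\cite{NC multi}) admits a versal non-commutative deformation $F$; its parameter algebra $R=\text{End}_X(F)$ is the Kalck--Karmazyn algebra of Theorem \ref{KKS}, so $\dim_k R=r^2s$, $F$ is pretilting, and near $P$ the toroidal versal deformation of $\mathcal O(-1)$ makes $F$ locally free or dual free, while away from $P$ it is $A$ up to the invertible or dual invertible twist of hypothesis (b); this gives (3). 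Locally at $P$ the class-T equation $\{xy=z^{sr}+t\}$ becomes $\mathbf C^3$ after the substitution $t=xy-z^{sr}$, so $\mathcal X$ is analytically the terminal quotient singularity $\mathbf C^3/\mathbf Z_r$ of type $\frac 1r(1,-1,a)$, the map $f$ is the induced invariant function with $X=\{xy=z^{sr}\}/\mathbf Z_r=\frac 1{sr^2}(1,asr-1)$ and $Y$ smooth near $P$; the $\mathbf Z_r$-quotient structure over the $A_{sr-1}$-surface section is what drives the construction (\cite{KSB}, \cite{H}).

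\emph{Extending $F$ and constructing the $\mathcal E_i$.} Because $F$ is pretilting, $\text{Ext}^i_X(F,F)=0$ for $i>0$, so $F$ is rigid and the obstructions to deforming it along $\mathcal X\to\Delta$ (which live in $\text{Ext}^2_X(F,F)$ with trivial coefficients, as $X$ is a fibre) vanish; hence after shrinking $\Delta$ there is a sheaf $\mathcal F$ on $\mathcal X$, flat over $\Delta$ and unique with $\mathcal F|_X\cong F$, and uniqueness forces $\mathcal F$ to be locally free or dual free since the evident local model over $\mathcal X$ at $P$ is itself a lift. By semicontinuity $\mathcal F$ and $\mathcal F|_Y$ are pretilting, and $\text{End}_{\mathcal X}(\mathcal F)$ is a flat family of $r^2s$-dimensional algebras over $\Delta$ with central fibre $R$. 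To obtain the $\mathcal E_i$, pass to a finite base change $t=(t')^N$ over which the deformation of the $A_{sr-1}$-surface section upstairs admits a simultaneous resolution; taking the $\mathbf Z_r$-quotient gives a crepant simultaneous partial resolution $g:\mathcal W\to\mathcal X'$, flat over $\Delta'$, with $\mathcal W_{t'}\to Y$ an isomorphism and $\mathcal W_0\to X$ a partial resolution carrying $s$ natural tautological rank-one divisorial sheaves. Pushing these down yields maximally Cohen--Macaulay sheaves $\mathcal E_1,\dots,\mathcal E_s$ of rank $r$ on $\mathcal X'$ which restrict on $X$ to $A^{\oplus r}$ (using (b) at the remaining singular points and the toroidal normalization at $P$, so that no double dual is needed) and on $Y$ to vector bundles $E_i$ of rank $r$; this is (1).

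\emph{Orthogonality, exceptionality, and $\text{End}(\mathcal F|_Y)$.} On $\mathcal W$ the exceptional curves of $\mathcal W_0$ over $X$ are flopping curves, and the flops between the various crepant simultaneous partial resolutions induce derived auto-equivalences permuting the pullbacks of the $\mathcal E_i$; since an equivalence preserves $R\text{Hom}$ and the geometry of the exceptional chain on $\mathcal W_0$ gives $R\text{Hom}_Y(E_i,E_j)\cong 0$ for one ordering $i>j$, the flop symmetry upgrades this to $R\text{Hom}_Y(E_i,E_j)\cong 0$ for all $i\neq j$, the mutual orthogonality asserted in (2). Consequently $\mathcal F|_Y$, being pretilting and containing each $E_i$, decomposes as $\bigoplus_i E_i^{\oplus m_i}$ with $\text{Ext}^{>0}_Y(E_i,E_j)=0$ for all $i,j$ and $\text{Hom}_Y(E_i,E_j)=0$ for $i\neq j$; in particular each $E_i$ is exceptional, $\text{End}_Y(\mathcal F|_Y)\cong\prod_i\text{Mat}(k,m_i)$, and comparing $\dim_k$ (equal to $\dim_k R=r^2s$) with the ranks ($\text{rank}(\mathcal F)=r^2s$, $\text{rank}(E_i)=r$) forces $m_i=r$, which gives (4) and $\text{End}_Y(\mathcal F|_Y)\cong\text{Mat}(k,r)^{\times s}$. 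For $s=1$ this reproves \cite{H} Theorem 1.1, with $\mathcal E_1$ dual to Hacking's reflexive sheaf and without any double dual on $X$ (Theorem \ref{Hacking'}); the general case is Theorem \ref{direct sum'}.

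\emph{Main obstacle.} The delicate part is the third step: controlling $D^b(\mathcal W)$ and its flops well enough to turn the semi-orthogonality inherited from the Kalck--Karmazyn decomposition into genuine total orthogonality on $Y$, and matching the $s$ components of the partial resolution with the $s$ simple blocks into which $R$ deforms. The extension in the second step is soft deformation theory, and once the $\mathcal E_i$ and their orthogonality are available, exceptionality and the precise shape of $\text{End}(\mathcal F|_Y)$ are bookkeeping.
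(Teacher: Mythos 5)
The statement you are asked to prove is Hacking's existence theorem for the rank-$r$ exceptional bundle on the smoothing of a single Wahl singularity; the paper quotes it from \cite{H} and itself proves only a modified version (Theorem \ref{Hacking}) by an explicit geometric construction. Your proposal never actually constructs this bundle. The decisive step in your sketch is that the crepant simultaneous partial resolution carries ``$s$ natural tautological rank-one divisorial sheaves'' whose pushdowns are ``maximally Cohen--Macaulay sheaves $\mathcal E_1,\dots,\mathcal E_s$ of rank $r$.'' This is a non sequitur: the pushforward of a rank-one sheaf along a birational morphism is again of rank one, so no rank-$r$ sheaf can arise this way. In the paper (following \cite{H}) the rank $r$ has an entirely different source: one performs a weighted blow-up $\mu\colon \mathcal X'\to\mathcal X$ at $P$ with weights $\frac 1r(1,ar-1,a,r)$, whose central fibre is $X'\cup W$ with $W\cong\{xy=z^r+t^a\}\subset\mathbf P(1,ar-1,a,r)$; one invokes the existence of an exceptional vector bundle $G$ of rank $r$ on $W$ with $G\vert_C\cong\mathcal O_C(-1)^{\oplus r}$ and $H^i(W,G)\cong 0$ for all $i$ (\cite{H} Proposition 5.1), glues $G$ to $(A')^{\oplus r}$ along $C=X'\cap W$ to obtain $G'$, checks $R\mu_*G'\cong A^{\oplus r}$ and $R\text{Hom}(G',G')\cong k$ so that $G'$ deforms unobstructedly to $\mathcal E'$ on $\mathcal X'$, and finally sets $\mathcal E=\mu_*\mathcal E'$. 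None of this --- in particular the bundle $G$ on the weighted hypersurface $W$ and the cohomology vanishings that make both the gluing and the pushforward work --- appears in your argument, and without it there is no candidate for $\mathcal E$.

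Beyond this, most of your proposal is aimed at the wrong target: the versal NC deformation $F$, its extension to $\mathcal F$, the decomposition $\mathcal F\vert_Y\cong\bigoplus_i E_i^{\oplus r}$, and the orthogonality via flops reconstruct the paper's Theorems \ref{Hacking'} and \ref{direct sum'}, all of which presuppose the statement under review rather than prove it. Two of its conclusions are not addressed at all: the slope stability of $E_Y$ (an essential ingredient in \cite{H}, and the very property this paper later uses to identify $\mathcal F\vert_Y$ with $(\mathcal E\vert_Y)^{\oplus r}$), and the precise structure of $E_X$, namely that its reflexive hull is $A^{\oplus r}$ with torsion quotient supported at $P$ --- the double-dual subtlety that distinguishes Hacking's formulation from the modification proved in this paper.
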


Hacking-Prokhorov \cite{HaP} and \cite{H} classified surfaces which have smoothings to $\mathbf P^2$:

\begin{Thm}[\cite{HaP} Corollary 1.2, \cite{H} Proposition 6.2]
Let $X$ be a normal projective surface with only quotient singularities.
Assume that $X$ has a smoothing to $\mathbf P^2$.
Then $X$ is isomorphic to a weighted projective plane $\mathbf P(a^2,b^2,c^2)$ 
or its $\mathbf Q$-Gorenstein partial smoothing, where 
$a,b,c$ are positive mutually coprime integers satisfying the Markov equation 
\[
a^2+b^2+c^2 = 3abc.
\]
Moreover, $X$ is uniquely determined by its singularities up to isomorphism.
\end{Thm}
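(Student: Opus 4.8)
The plan is to reduce the statement to a classification of $\mathbf Q$-Gorenstein smoothable log del Pezzo surfaces of Picard number one and anticanonical degree $9$, and then to carry out that classification via the minimal resolution together with Manetti's description of normal degenerations of $\mathbf P^2$.

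First I would pin down the invariants of $X$. Flatness of the deformation keeps $K^2$ and $\chi(\mathcal O)$ constant, so $K_X^2 = 9$ and $\chi(\mathcal O_X) = 1$; since only quotient singularities occur, $K_X$ is $\mathbf Q$-Cartier, and as the deformation is $\mathbf Q$-Gorenstein and ampleness of $-K$ is open in flat families, $-K_X$ is ample. Thus $X$ is a log del Pezzo surface, in particular rational, so $H^i(X,\mathcal O_X) = 0$ for $i>0$ (pass to the minimal resolution and use that quotient singularities are rational); the same vanishing holds on every fiber, so the relative Picard group is locally constant over the disk and $\rho(X) = \rho(\mathbf P^2) = 1$. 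Finally, localizing the total space at a singular point of $X$ produces a $\mathbf Q$-Gorenstein smoothing of that germ, so by the Koll\'ar--Shepherd-Barron classification of singularities of class $T$ (\cite{KSB}) every singular point of $X$ is a cyclic quotient singularity of type $\frac{1}{4}(1,1)$ or $\frac{1}{dn^2}(1,dna-1)$ with $(n,a)=1$.

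Next I would pass to the minimal resolution $\pi\colon\tilde X\to X$, a smooth rational surface with $\rho(\tilde X) = 10 - K_{\tilde X}^2$, equal to $\rho(X)$ plus the number of $\pi$-exceptional prime divisors, whose exceptional locus is a disjoint union of chains of smooth rational curves, one chain per singular point with continued-fraction data determined by the singularity type. Writing $K_{\tilde X} = \pi^*K_X + \sum_{j,k} d_{jk}E_{jk}$ with $-1 < d_{jk} \le 0$ and intersecting against the $E_{jk}$ and against $K_{\tilde X}$ expresses $K_{\tilde X}^2$ as $9$ minus a sum of nonnegative local defects; combined with $\rho(X)=1$ this bounds the number of singular points and the lengths of the chains, and forces the $E_{jk}$ to span a corank-one sublattice of $\mathrm{Pic}(\tilde X)\otimes\mathbf Q$, with orthogonal complement spanned by the pullback of the ample class. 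The heart of the proof is to feed these constraints, together with the positivity of $-K_X$, into Manetti's theorem (see \cite{HaP}): by $\mathbf Q$-Gorenstein deformation theory one connects $X$, within the deformation component of $\mathbf P^2$, through partial smoothings and their inverse degenerations to a maximal degeneration $X_0$ whose singularities are all rigid Wahl singularities and whose (now very constrained) resolution graph forces $X_0$ to be toric, hence a weighted projective plane $\mathbf P(a^2,b^2,c^2)$. For such a surface one computes directly $K_{\mathbf P(a^2,b^2,c^2)}^2 = (a^2+b^2+c^2)^2/(a^2b^2c^2)$, so the requirement $K^2 = 9$ becomes exactly the Markov equation $a^2+b^2+c^2 = 3abc$, which in turn forces $a,b,c$ pairwise coprime, so that the weights $a^2, b^2, c^2$ are well-formed; every remaining admissible $X$ is then a $\mathbf Q$-Gorenstein partial smoothing of such an $X_0$ obtained by smoothing a subset of its (at most three) Wahl points. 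The uniqueness assertion follows because the resulting list is explicit and Wahl singularities are $\mathbf Q$-Gorenstein rigid, so the set of singularities of $X$ determines both the weighted projective plane it degenerates from and which of its vertices were smoothed.

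The step I expect to be the genuine obstacle is this last geometric reduction: excluding, on a rational surface of Picard number one with $K^2 = 9$, every configuration of class-$T$ singularities other than those carried by weighted projective planes and their partial smoothings. A head-on attack by resolution-graph combinatorics --- embedding the chains into a corank-one lattice while keeping $-K_X$ ample --- is delicate and case-heavy; the clean route is to cut the possibilities down to bounded data as above and then invoke Manetti's classification of normal degenerations of the projective plane, in which the relevant components of the $\mathbf Q$-Gorenstein deformation space are organized by the Markov mutation tree. So the proposal is to establish the numerical reduction and the minimal-resolution bounds carefully, and then conclude with that structural input.
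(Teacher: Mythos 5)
This statement is quoted background in the paper: it is attributed to \cite{HaP} Corollary 1.2 and \cite{H} Proposition 6.2 and no proof is given in the text, so there is no in-paper argument to compare yours against; what follows measures your sketch against the actual content of those references. The main gap is at the very start. The hypothesis is only that $X$ admits a smoothing to $\mathbf P^2$, not a $\mathbf Q$-Gorenstein one, yet you write ``as the deformation is $\mathbf Q$-Gorenstein\dots'' and then invoke the Koll\'ar--Shepherd-Barron classification of class-$T$ singularities, which by definition classifies the quotient singularities admitting $\mathbf Q$-Gorenstein smoothings. Establishing that the given smoothing is (after base change) $\mathbf Q$-Gorenstein --- and hence that $K^2$ is constant in the family and the singular points are of class $T$ --- is precisely the nontrivial first step; it does not follow from flatness alone, since $K^2$ of the fibers can jump in a flat family of normal surfaces that is not $\mathbf Q$-Gorenstein. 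The standard route (Manetti, and \cite{HaP}) is topological: comparing $b_2(\mathbf P^2)=1$ with $b_2(X)\ge 1$ via the Milnor fibers of the local smoothings forces every Milnor fiber to be a rational homology ball, which for cyclic quotient singularities singles out the Wahl germs $\frac 1{n^2}(1,na-1)$ and identifies the local smoothing with the $\mathbf Q$-Gorenstein component. Without this step your computation of $K_X^2=9$ and your appeal to class $T$ are circular.

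Two further points. First, Wahl singularities are not ``$\mathbf Q$-Gorenstein rigid'': their $\mathbf Q$-Gorenstein deformation space is one-dimensional and every nontrivial member is a smoothing. What the uniqueness assertion actually needs is that they admit no $\mathbf Q$-Gorenstein partial smoothings to other singular germs, combined with the fact (part of the cited classification) that a surface on this list is determined up to isomorphism by its singularities. Second, your declared ``heart of the proof'' consists of invoking Manetti's classification of normal degenerations of $\mathbf P^2$, i.e.\ essentially the theorem being proved; that is exactly what the paper itself does by citation, and is acceptable at that level, but then the preliminary numerical reductions are redundant, while a self-contained proof would still owe the hard part: the combinatorial classification of admissible resolution graphs, the toricness of the maximal degeneration, and the verification that a rank-one toric log del Pezzo of degree $9$ with Wahl singularities is an honest weighted projective plane $\mathbf P(a^2,b^2,c^2)$ rather than a fake one.
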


They also classified all del Pezzo surfaces with only quotient singularities such that $\rho(X) = 1$ and 
admit $\mathbf Q$-Gorenstein smoothings (\cite{HaP} Theorem 1.1).

%%%%%%%%%%%%%%%%%%%%%%%
\subsection{Pretilting objects}

Let $X$ be a projective variety over $k$.
An object $T \in D^b(X)$ is said to be {\em pretilting} if 
\[
\text{Hom}_X(T,T[p]) \cong 0
\]
for all $p \ne 0$.
Let $R_T = \text{End}(T)$ be the endomorphism ring.
It is a finite dimensional associative algebra over $k$.
$T$ is said to be {\em tilting} when it is a perfect complex and weakly generates the whole category $D^b(X)$
(in the sense that $\overline{\langle T \rangle} = D^b(X)$).
We do not require that a pretilting object $T \in D^b(X)$ is a perfect complex.
This is because we consider free or dual free sheaves in this paper which are not necessarily perfect.
For example, $\omega_X$ is not necessarily a perfect complex.
Thus $\mathbb R\text{Hom}(T,A)$ is not necessarily bounded for $A \in D^b(X)$ in general.

\begin{Lem}
Let $T$ be a pretilting object.
Define $\Phi: D(X) \to D(R_T)$ by $\Phi(\bullet) = \mathbb R\text{Hom}_X(T,\bullet)$ and
$\Psi: D(R_T) \to D(X)$ by $\Psi(\bullet) = \bullet \otimes^{\mathbb L}_{R_T} T$.
Then $\text{Im}(\Psi) = \overline{\langle T \rangle}$ and $\Psi$ induces
an equivalence $D(R_T) \cong \overline{\langle T \rangle}$.
\end{Lem}

\begin{proof}
$\Psi$ is a left adjoint of $\Phi$; $\text{Hom}_{D(X)}(\Psi(A), B) \cong \text{Hom}_{D(R_T)}(A, \Phi(B))$ for
$A \in D(R_T)$ and $B \in D(X)$.
Moreover the adjunction morphism of functors $\text{Id}_{D(R_T)} \to \Phi \Psi$ is an equivalence.
Indeed we have $\Phi \Psi(A) = \mathbb R\text{Hom}_X(T, A \otimes^{\mathbb L}_{R_T} T) \cong A$ for 
$A \in D(R_T)$.
We have also $\text{Hom}(A, B) \cong \text{Hom}(A, \Phi\Psi(B)) \cong \text{Hom}(\Psi(A), \Psi(B))$.
Therefore $\Psi$ induces an equivalence $D(R_T) \cong \text{Im}(\Psi)$.

Let $A \in D(R_T)$ and $B \in T^{\perp}$.  
Then we have $\mathbb R\text{Hom}_{D(X)}(\Psi(A), B) \cong 
\mathbb R\text{Hom}_{D(R_T)}(A, \Phi(B)) \cong 0$, hence $\text{Im}(\Psi) \subset \overline{\langle T \rangle}$.

Conversely, let $C \in \overline{\langle T \rangle}$.
There is a distinguished triangle
\[
\Psi\Phi(C) \to C \to C' \to \Psi\Phi(C)[1]
\]
for some $C' \in D(X)$.
Since $\Psi\Phi(C) \in \overline{\langle T \rangle}$, we have $C' \in \overline{\langle T \rangle}$.
Since $\Phi\Psi\Phi(C) \cong \Phi(C)$, we have $\Phi(C') \cong 0$, i.e., $C' \in T^{\perp}$.
Then we have $\text{Hom}_{D(X)}(C', C') \cong 0$, hence $C' \cong 0$.
Therefore $\overline{\langle T \rangle} \subset \text{Im}(\Psi)$.
\end{proof}

We consider a special case where a pretilting object $T$ is a versal NC deformation of a simple coherent sheaf $S$ on $X$ 
in this paper (cf. \cite{NC multi}):

\begin{Lem}
Assume that a pretilting object $T$ is a versal NC deformation of a simple coherent sheaf $S$ on $X$.
Then $\Psi$ induces induces an equivalence $\Psi^b: D^b(R_T) \cong \langle S \rangle \subset D^b(X)$.
Moreover $\langle S \rangle = \overline{\langle T \rangle} \cap D^b(X)$, where the closure is
taken in $D(X)$.
\end{Lem}

\begin{proof}
$T$ is flat over $R_T$ and $k \otimes_{R_T} T \cong S$ by the definition of an NC deformation and
\cite{NC multi} Lemmas 4.4 and 4.5 with Corollary 4.6.
It follows that the functor $\Psi^b: \text{mod }R_T \to \text{Coh }X$ defined by 
$\Psi^b(\bullet) = \bullet \otimes_{R_T} T$ is exact, and induces a triangulated functor 
\[
\Psi^b: D^b(R_T) \to D^b(X).
\]
Since $R_T$ is finite dimensional, $D^b(R_T)$ is classically generated by $k$; $D^b(R_T) = \langle k \rangle$.
We have $\Phi(k) \cong S$, hence $\text{Im}(\Psi^b) = \langle S \rangle$.
The equivalence $\Phi: D(R_T) \cong \overline{\langle T \rangle}^{D(X)}$ induces 
an equivalence $\Phi^b: D^b(R_T) \cong \langle S \rangle$.

Let us take a $A \in \overline{\langle T \rangle} \cap D^b(X)$.
We have a distinguished triangle arising from a natural morphism 
\[
\Psi\Phi(A) \to A \to B \to \Psi\Phi(A)[1].
\]
Since $\Phi\Psi\Phi(A) \cong \Phi(A)$, we have $\Phi(B) \cong 0$, i.e., $B \in T^{\perp}$.
On the other hand, $\Psi\Phi(A)$ and $A$ belong to $\overline{\langle T \rangle}$, hence so does $B$.
Therefore $B \cong 0$, hence $A \cong \Psi\Phi(A)$.
We note that, if $\Phi(A)$ is not bounded, then so is $\Psi\Phi(A)$, a contradiction.
\end{proof}

%%%%%%%%%%%%%%%%%%%%%%%
\subsection{Miscellaneous}

\begin{Lem}\label{local vanishing}
Let $F$ and $G$ be coherent sheaves on a Cohen-Macaulay variety $X$. 
Assume that either one of the following holds at each point:
(1) $F$ is locally free, or (2) $F$ is maximally Cohen-Macaulay and $G$ is locally dual free. 
Then all higher local extensions vanish: $\mathcal{E}xt^i(F,G) = 0$ for $i > 0$.
\end{Lem}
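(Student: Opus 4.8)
The plan is to reduce the statement to a purely local one at each point of $X$ and then apply Grothendieck local duality. Since $F$ is coherent, the formation of the sheaves $\mathcal{E}xt^i(F,G)$ commutes with localization, so $\mathcal{E}xt^i(F,G) = 0$ if and only if $\text{Ext}^i_{\mathcal{O}_{X,x}}(F_x,G_x) = 0$ for every $x \in X$; note also that the hypotheses ``locally free'', ``maximally Cohen-Macaulay'' and ``locally dual free'' are each stable under localization, so at every $x$ one of the two alternatives (1), (2) continues to hold. Fix $x$ and write $R = \mathcal{O}_{X,x}$, a Cohen-Macaulay local ring of dimension $d = \dim R$ with maximal ideal $\mathfrak{m}$; since $R$ is the localization of a finite-type $\mathbf{C}$-algebra it is a quotient of a regular local ring, hence possesses a canonical module $\omega_R$, and $\omega_R \cong (\omega_X)_x$. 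In case (1), $F_x \cong R^{\oplus n}$, so $\text{Ext}^i_R(F_x,G_x) \cong G_x^{\oplus n}$ for $i=0$ and vanishes for $i>0$, and there is nothing to prove.

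It remains to treat case (2). Here $G_x \cong \omega_R^{\oplus m}$ for some $m$, so it suffices to show that $\text{Ext}^i_R(M,\omega_R) = 0$ for all $i>0$, where $M := F_x$ is a maximal Cohen-Macaulay $R$-module, i.e. $\text{depth}_{\mathfrak{m}} M = d$. By the Grothendieck local duality theorem there is an isomorphism of Matlis duals $\text{Ext}^{\,d-i}_R(M,\omega_R)^{\vee} \cong H^i_{\mathfrak{m}}(M)$ (equivalently, $R\text{Hom}_R(M,\omega_R[d])$ is the Grothendieck dual of $M$ with respect to the normalized dualizing complex $\omega_R^{\bullet} = \omega_R[d]$). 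Since $M$ is maximal Cohen-Macaulay, its local cohomology satisfies $H^i_{\mathfrak{m}}(M) = 0$ for $i < \text{depth}\, M = d$ and for $i > \dim M = d$, hence $H^i_{\mathfrak{m}}(M) = 0$ for every $i \ne d$. Therefore $\text{Ext}^{\,d-i}_R(M,\omega_R) = 0$ for $i \ne d$, that is, $\text{Ext}^j_R(M,\omega_R) = 0$ for all $j \ne 0$. Combining the two cases shows that all stalks of $\mathcal{E}xt^i(F,G)$ vanish for $i>0$, as claimed.

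There is no genuine obstacle: the argument is a routine application of local duality, and the only points needing care are the existence of the canonical module $\omega_R$ and the identification $\omega_R \cong (\omega_X)_x$ — both standard since $X$ is locally Cohen-Macaulay and of finite type over $\mathbf{C}$, so it carries a dualizing complex restricting to $\omega_X[d]$ on each $d$-dimensional component — together with the bookkeeping that the hypotheses localize. If one prefers to avoid local cohomology, the same conclusion follows from Grothendieck--Serre duality on $X$ directly: for $F$ maximal Cohen-Macaulay the dual complex $R\mathcal{H}om_X(F,\omega_X[d])$ is concentrated in cohomological degree $0$, hence $R\mathcal{H}om_X(F,\omega_X)$ is concentrated in degree $-d$ and $\mathcal{E}xt^i(F,\omega_X) = 0$ for all $i \ne 0$, which suffices (tensoring by the free module $G_x^{\oplus m}/F_x$-direction being harmless).
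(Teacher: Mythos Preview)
Your argument is correct and is exactly the approach the paper indicates: the lemma is stated there as an immediate consequence of the local duality theorem, with no further proof given, and your reduction to stalks followed by Grothendieck local duality in case~(2) is precisely that. (One cosmetic slip in your closing parenthetical: for $F$ maximally Cohen--Macaulay the complex $R\mathcal{H}om_X(F,\omega_X[d])$ sits in degree $-d$, not $0$, so $R\mathcal{H}om_X(F,\omega_X)$ sits in degree $0$ --- but this does not affect the main proof.)
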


\begin{proof}
The case (1) is clear.
For the case (2), we may assume that $G = \omega_X$.
By the local duality theorem (\cite{Hartshorne} Theorem V.6.2), 
we have 
\[
\text{Hom}(\mathcal{E}xt^i(F, G),I) \cong \mathcal H^{n-i}_x(F) \cong 0
\]
for $i > 0$, where $n = \dim X$, $x \in X$ is a closed point and $I$ is the injective hull of $k(x)$. 
Hence $\mathcal{E}xt^i(F, G) \cong 0$.
\end{proof}

We will need the following proposition:

\begin{Prop}\label{extension}
Let $f: \mathcal X \to \Delta$ be a flat projective morphism from a Cohen-Macaulay variety to a disk, and 
let $F$ be a locally free or dual free sheaf on $X = f^{-1}(0)$.
Assume that $F$ is pretilting. 
Then, after shrinking $\Delta$ if necessary, there exists a coherent sheaf $\mathcal F$ on $\mathcal X$ uniquely up to isomorphisms
which satisfies the following:

(1) $\mathcal F$ is locally free or dual free.

(2) $\mathcal F \otimes \mathcal O_X \cong F$.

(3) $\mathcal F \otimes \mathcal O_Y$ is pretilting for $Y = f^{-1}(t)$ with $t \ne 0$. 

(3) $\dim \text{End}(F) = \dim \text{End}(\mathcal F \otimes \mathcal O_Y)$.
\end{Prop}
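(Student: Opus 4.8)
The plan is to construct $\mathcal F$ as the algebraization of a formal deformation of $F$ along $f$, and then to read off $(1)$--$(4)$ from the local structure of $F$, Lemma \ref{local vanishing}, and the semicontinuity theorem. The key preliminary point is that $F$, being locally free or dual free, is locally rigid and carries a canonical lift to the total space: near a point where $F\cong\mathcal O_X^{\oplus r}$ it lifts to $\mathcal O_{\mathcal X}^{\oplus r}$, and near a point where $F\cong\omega_X^{\oplus r}$ it lifts to $\omega_{\mathcal X}^{\oplus r}$. Here one uses that $\mathcal X$ is Cohen-Macaulay --- a flat family of Cohen-Macaulay varieties over a smooth curve --- and generically Gorenstein, that $t$ is a nonzerodivisor on $\mathcal O_{\mathcal X}$ and hence on the dualizing sheaf $\omega_{\mathcal X}$, and that $X=f^{-1}(0)$ is a principal divisor in $\mathcal X$, so that adjunction gives $\omega_{\mathcal X}\vert_X\cong\omega_X$. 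A short Nakayama argument --- lift a generating set, then kill the kernel using $\Delta$-flatness --- shows that up to isomorphism these are the only $\Delta$-flat local lifts; in the dual free case one runs this argument on $\mathcal{H}om_{\mathcal X}(-,\omega_{\mathcal X})$, reducing matters to the locally free case and using that $\mathcal F$ is automatically maximally Cohen-Macaulay along $X$ and that $\omega_{\mathcal X}$-duality is exact and involutive on maximally Cohen-Macaulay sheaves over $\mathcal X$.

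For the construction, write $X_n=\mathcal X\times_\Delta\text{Spec}\,k[t]/(t^{n+1})$, so that $X_0=X$ and, since $t$ is a nonzerodivisor on $\mathcal O_{\mathcal X}$, the ideal sheaf of $X_{n-1}$ in $X_n$ is isomorphic to $\mathcal O_X$. I would lift $F$ step by step along $X_0\hookrightarrow X_1\hookrightarrow\cdots$: the obstruction to lifting a $\Delta$-flat sheaf $F_{n-1}$ on $X_{n-1}$ to a $\Delta$-flat sheaf $F_n$ on $X_n$ lies in $\text{Ext}^2_X(F,F)$, and, when this obstruction vanishes, the set of lifts is a torsor under $\text{Ext}^1_X(F,F)$. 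Since $F$ is pretilting both groups are zero, so there is a unique compatible system $(F_n)_{n\ge 0}$, that is, a coherent sheaf on the formal completion of $\mathcal X$ along $X$, flat over the formal disk and restricting to $F$. Since $f$ is projective, Grothendieck's existence theorem algebraizes it over the completed base, and --- by finite presentation, or by Artin approximation in the analytic setting --- after shrinking $\Delta$ it spreads out to a coherent sheaf $\mathcal F$ on $\mathcal X$, flat over $\Delta$, with $\mathcal F\vert_X\cong F$; this gives $(2)$.

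For the remaining properties: at every point $p\in X$ the completed stalk of $\mathcal F$ is a $\Delta$-flat local lift of that of $F$, hence by the uniqueness above it is $\mathcal O_{\mathcal X,p}^{\oplus r}$ or $\omega_{\mathcal X,p}^{\oplus r}$, so $\mathcal F$ is locally free or dual free along $X$. Now ``locally free'' and ``locally dual free'' are open conditions on a coherent sheaf over $\mathcal X$ --- for the latter, $\mathcal F_q\cong\omega_{\mathcal X,q}^{\oplus r}$ is equivalent to $\mathcal{H}om_{\mathcal X}(\mathcal F,\omega_{\mathcal X})$ being locally free of rank $r$ at $q$ together with the biduality map $\mathcal F\to\mathcal{H}om_{\mathcal X}(\mathcal{H}om_{\mathcal X}(\mathcal F,\omega_{\mathcal X}),\omega_{\mathcal X})$ being an isomorphism at $q$, using $\mathcal{H}om_{\mathcal X}(\omega_{\mathcal X},\omega_{\mathcal X})\cong\mathcal O_{\mathcal X}$ --- so the locally-free-or-dual-free locus is open, contains $X$, and (its complement being proper over $\Delta$) contains $f^{-1}$ of a smaller disk around $0$; this is $(1)$. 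The same local computation shows that $\mathcal H:=\mathcal{H}om_{\mathcal X}(\mathcal F,\mathcal F)$ is locally free of rank $r^2$, hence $\Delta$-flat and compatible with restriction to fibres, $\mathcal H\vert_{X_t}\cong\mathcal{H}om_{X_t}(\mathcal F\vert_{X_t},\mathcal F\vert_{X_t})$; by Lemma \ref{local vanishing} the local-to-global spectral sequence degenerates, giving $\text{Ext}^p_{X_t}(\mathcal F\vert_{X_t},\mathcal F\vert_{X_t})\cong H^p(X_t,\mathcal H\vert_{X_t})$ for all $p$. Since $H^p(X,\mathcal H\vert_X)=\text{Ext}^p_X(F,F)=0$ for $p>0$ by pretilting, the semicontinuity theorem for the projective flat family $(\mathcal X,\mathcal H)\to\Delta$ forces $H^p(X_t,\mathcal H\vert_{X_t})=0$ for $p>0$ and $t$ near $0$, which is $(3)$; and then, the Euler characteristic being locally constant,
\[
\dim\text{End}(\mathcal F\vert_Y)=h^0(Y,\mathcal H\vert_Y)=\chi(Y,\mathcal H\vert_Y)=\chi(X,\mathcal H\vert_X)=h^0(X,\mathcal H\vert_X)=\dim\text{End}(F),
\]
which is $(4)$.

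The main obstacle is the dual free case throughout: $\mathcal F$ need not be a perfect complex, so one cannot invoke Tor-independence or the deformation theory of perfect complexes, and must instead work systematically with the dualizing sheaf $\omega_{\mathcal X}$ of the total space, the adjunction $\omega_{\mathcal X}\vert_X\cong\omega_X$, the fact that $\mathcal F$ is maximally Cohen-Macaulay along $X$, and Lemma \ref{local vanishing}, so as to keep every higher Ext group and every higher Ext sheaf under control. By comparison, the obstruction computation is standard once the ideal sheaves of the thickenings are identified with $\mathcal O_X$, and the passage from the formal to the analytic deformation --- the shrinking of $\Delta$ --- is routine given the projectivity of $f$.
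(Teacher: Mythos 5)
Your proposal is correct and follows essentially the same route as the paper: inductive lifting of $F$ over the infinitesimal neighborhoods $X_m$ using the local rigidity of locally free / dual free sheaves together with the vanishing of $\mathrm{Ext}^1(F,F)$ and $\mathrm{Ext}^2(F,F)$ supplied by the pretilting hypothesis and Lemma \ref{local vanishing}, then algebraization after shrinking $\Delta$, and finally semicontinuity applied to $\mathcal{H}om(\mathcal F,\mathcal F)$ for (3) and (4). Your write-up is somewhat more detailed (openness of the dual-free locus, the Euler-characteristic step) where the paper phrases the gluing as a \v{C}ech cocycle argument with $H^1(X,\mathcal{E}nd(F))=0$ and invokes a relative moduli space for the algebraization, but the content is the same.
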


\begin{proof} 
We take an open covering $X = \bigcup U_i$ such that $F \vert_{U_i}$ is isomorphic to 
either $\mathcal O_{U_i}^{\oplus r}$
or $\omega_{U_i}^{\oplus r}$.
There are gluing isomorphisms $\phi_{ij}: F_j \vert_{U_{ij}} \to F_i \vert_{U_{ij}}$ for $U_{ij} = U_i \cap U_j$.

Let $X_m$ be the $m$-th infinitesimal neighborhood of $X$ in $\mathcal X$ defined by $t^{m+1}=0$, where $t$ is the 
parameter on $\Delta$.
We have $X_0 = X$.

We extend $F$ to a locally free or dual free sheaf $F^{(m)}$ on $X_m$ by induction on $m$.
Suppose that we have already $F^{(m)}$.
We have $F^{(m)}_i = F^{(m)} \vert_{U_i} \cong \mathcal O_{U^{(m)}_i}^{\oplus r}$ or $\omega_{U^{(m)}_i}^{\oplus r}$, 
where $U^{(m)}_i = X_m \vert_{U_i}$.
There are gluing isomorphisms $\phi^{(m)}_{ij}: F^{(m)}_j \vert_{U_{ij}} \to F^{(m)}_i \vert_{U_{ij}}$.

Let $F^{(m+1)}_i = \mathcal O_{U^{(m+1)}_i}^{\oplus r}$ or $\omega_{U^{(m+1)}_i}^{\oplus r}$, 
and let $\phi^{(m+1)}_{ij}: F^{(m+1)}_j \vert_{U_{ij}} \to F^{(m+1)}_i \vert_{U_{ij}}$ 
be any isomorphisms extending the $\phi^{(m)}_{ij}$.
If they satisfy the cocycle condition 
$\phi^{(m+1)}_{ij}\phi^{(m+1)}_{jk} = \phi^{(m+1)}_{ik}: F^{(m+1)}_k \vert_{U_{ijk}} \to F^{(m+1)}_i \vert_{U_{ijk}}$ for 
$U_{ijk} = U_i \cap U_j \cap U_k$, then we obtain $F^{(m+1)}$.
Therefore we define
\[
\delta_{ijk} = \phi^{(m+1)}_{ij}\phi^{(m+1)}_{jk} - \phi^{(m+1)}_{ik} 
\in t^{m+1}\mathcal Hom_{U_{ijk}}(F_k \vert_{U_{ijk}}, F_i \vert_{U_{ijk}}).
\]
$\{\delta_{ijk}\}$ is a closed $2$-cochain in the following sense:
\[
\begin{split}
&\phi_{ij} \delta_{jkl} - \delta_{ikl} + \delta_{ijl} - \delta_{ijk} \phi_{kl}\\
&= \phi^{(m+1)}_{ij} (\phi^{(m+1)}_{jk}\phi^{(m+1)}_{kl} - \phi^{(m+1)}_{jl}) - (\phi^{(m+1)}_{ik}\phi^{(m+1)}_{kl} - \phi^{(m+1)}_{il}) \\
&+ (\phi^{(m+1)}_{ij}\phi^{(m+1)}_{jl} - \phi^{(m+1)}_{il}) - (\phi^{(m+1)}_{ij}\phi^{(m+1)}_{jk} - \phi^{(m+1)}_{ik}) \phi^{(m+1)}_{kl} \\
&= 0
\end{split}
\]
where we used $t^{m+1}\phi^{(m+1)}_{ij} = t^{m+1}\phi_{ij}$.

Since $H^2(X, \mathcal End(F)) \cong 0$, there is a $1$-cochain $\{\epsilon_{ij}\}$ such that 
$\epsilon_{ij} \in t^{m+1} \mathcal Hom_{U_{ij}}(F_j \vert_{U_{ij}}, F_i \vert_{U_{ij}})$ and 
\[
\delta_{ijk} = \phi_{ij} \epsilon_{jk}  - \epsilon_{ik} + \epsilon_{ij} \phi_{jk}.
\]
More precisely, we have
\[
\delta_{ijk} \phi_{ki} = \phi_{ij} \epsilon_{jk} \phi_{ki} - \epsilon_{ik} \phi_{ki} + \epsilon_{ij} \phi_{ji}
\]
where we used the cocycle condition for the $\phi_{ij}$.
Then it follows 
\[
(\phi^{(m+1)}_{ij} - \epsilon_{ij}) (\phi^{(m+1)}_{jk} - \epsilon_{jk}) = \phi^{(m+1)}_{ik} - \epsilon_{ik}
\]
and we obtain new gluing isomorphisms for the $F^{(m+1)}_i$ satisfying the cocycle condition.
Hence we obtain $F^{(m+1)}$.

Thus we have a formal deformation of $F$ on $\mathcal X$ relatively over the disk $\Delta$.
Since there exists a moduli space of $F$ on $\mathcal X/\Delta$ (the Quot scheme), 
the deformation extends to $\mathcal X$ to yield $\mathcal F$ if we shrink $\Delta$.   
Since $H^i(X, \mathcal{E}nd(F)) \cong Ext^i(F,F) \cong 0$ for $i > 0$, 
it follows that $H^i(Y, \mathcal{E}nd(\mathcal F) \otimes \mathcal O_Y) \cong 0$ for $Y = f^{-1}(t)$ and $i > 0$
by the upper-semi-continuity theorem.
Therefore $\dim \text{End}(\mathcal F \otimes \mathcal O_{f^{-1}(t)})$ is constant and 
$Ext^i(\mathcal F \otimes \mathcal O_Y,\mathcal F \otimes \mathcal O_Y) = 0$ for $i > 0$.
\end{proof}

%%%%%%%%%%%%%%%%%%%%%%%
\subsection{Motivating example $\mathbf P(1,1,4)$}

We consider $X = \mathbf P(1,1,d)$, the cone over a normal rational curve of degree $d$ (\cite{NC multi} Example 5.7).
We will calculate this example as a particular case of Theorem \ref{Pa1a2a3}.

Let $l$ be a generator of the cone, and let 
$\mathcal{O}_X(1) = \mathcal{O}_X(l)$ be the corresponding divisorial sheaf.
We have $\mathcal{O}_X(K_X) \cong \mathcal{O}_X(-d-2)$.

We construct a sheaf $F$ (denoted by $G$ in \cite{NC multi}) by a universal extension
\[
0 \to \mathcal{O}_X(-1)^{\oplus d-1} \to F \to \mathcal{O}_X(-1) \to 0.
\]
Then $F$ is a locally free sheaf of rank $d$.
$F$ is pretilting, and we have 
\[
R_F = \text{End}(F) \cong k[[x_1,\dots,x_{d-1}]]/(x_1,\dots,x_{d-1})^2.
\]
$F$ is a versal non-commutative deformation of $\mathcal{O}_X(-1)$ over $R_F$.

We have semi-orthogonal decompositions:
\[
D^b(X) = \langle \mathcal{O}_X(-2), \mathcal{O}_X(-1), \mathcal{O}_X \rangle 
=  \langle \mathcal{O}_X(-1), \mathcal{O}_X, \mathcal{O}_X(d) \rangle.
\]
We note that $\mathcal O_X(-2)$ is dual invertible at the vertex of the cone, and is not a perfect complex if $d > 2$.
We can also write
\[
D^b(X) = \langle \mathcal{O}_X(-2), \overline F, \mathcal{O}_X \rangle 
=  \langle \overline F, \mathcal{O}_X, \mathcal{O}_X(d) \rangle
\]
where we abbreviate $\overline F = \overline{\langle F \rangle} \cap D^b(X)$.

Here we would like to correct an error in \cite{NC multi} Example 5.7.
We claimed that there is a semi-orthogonal decomposition
$D^b(X) = \langle \mathcal O_X(-d),F,\mathcal O_X \rangle$, but it is false.
Indeed we have $\mathbb R\text{Hom}(F,\mathcal O_X(-d)) \not\cong 0$ by the following calculation.
We have an exact sequence
\[
0 \to F \to  \mathcal O_X^{\oplus d} \to \mathcal O_C(d-1) \to 0
\]
where $C \in \vert \mathcal O_X(d) \vert$ is a curve at infinity.
Since $RH(X,\mathcal O_X(-d)) \cong 0$, we have
\[
\begin{split}
&\mathbb R\text{Hom}(F,\mathcal O_X(-d)) \cong \mathbb R\text{Hom}(\mathcal O_C(d-1),\mathcal O_X(-d))[1] \\
&\cong \mathbb R\text{Hom}(\mathcal O_X(-d),\mathcal O_C(-3))^*[-1]
\cong \mathbb R\Gamma(C,\mathcal O_C(d-3))^*[-1] \not\cong 0
\end{split}
\]
for $d > 2$.

In the case $d = 4$, $X$ has a $\mathbf Q$-Gorenstein smoothing to $\mathbf P^2$.
Let $V = \mathbf P(1,1,1,2)$ be the projective cone over a Veronese surface, and let $x,y,z,t$
be the semi-invariant coordinates.
It has a terminal quotient singularity at the vertex.
We can embed $X$ in $V$ by an equation $xy = z^2$, then a smoothing is given by
a linear system
\[
\mathcal X = \{xy = z^2 + st\} \subset V \times \mathbf P^1
\]
where $s$ is an inhomogeneous coordinate on $\mathbf P^1$.
$s = \infty$ corresponds to the plane at infinity.
Fibers except $X$ are isomorphic to $\mathbf P^2$, and
$2K_{\mathcal X}$ is invertible.

The fibers of the $\mathbf Q$-Gorenstein smoothing have the following semi-orthogonal decompositions:
\[
\begin{split}
&D^b(X) = \langle \mathcal{O}_X(-2), \mathcal{O}_X(-1), \mathcal{O}_X \rangle \\
&D^b(\mathbf P^2) = \langle \mathcal{O}_{\mathbf P^2}(-1), \Omega^1_{\mathbf P^2}(1), 
\mathcal{O}_{\mathbf P^2} \rangle.
\end{split}
\]
We have the following correspondence.
The dual invertible sheaf $\mathcal{O}_X(-2)$ deforms to $\mathcal{O}_{\mathbf P^2}(-1)$, 
because $K_X = \mathcal{O}_X(-6)$ deforms to $K_{\mathbf P^2} = \mathcal{O}_{\mathbf P^2}(-3)$.
The versal NC deformation $F$ of $\mathcal{O}_X(-1)$ deforms to $\Omega^1_{\mathbf P^2}(1)^{\oplus 2}$, 
and the endomorphism ring $R_F \cong k[[x_1,x_2,x_3]]/(x_1,x_2,x_3)^2$ deforms to  
$Mat(2,k)$.

Indeed there are exact sequences 
\[
\begin{split}
&0 \to F \to \mathcal{O}_X^{\oplus 4} \to \mathcal{O}_C(3) \to 0, \\
&0 \to \Omega^1_{\mathbf P^2}(1)^{\oplus 2} \to \mathcal{O}_{\mathbf P^2}^{\oplus 4} \to \mathcal{O}_C(3) \to 0, 
\end{split}
\]
where $C = X \cap \mathbf P^2$ is the curve at infinity.

%%%%%%%%%%%%%%%%%%%%%%%
\section{Non-commutative deformation on a surface with a cyclic quotient singularity}

%%%%%%%%%%%%%%%%%%%%%%%
\subsection{Generalities}

We recall the theory of multi-pointed non-commutative (NC) deformations.
Let $k^m$ be a direct product ring for a positive integer $m$.
We denote by $\text{Art}_m$ the category of 
associative {\em augmented} $k^m$-algebras $R$, i.e., there are ring homomorphisms $k^m \to R \to k^m$ whose 
composition is the identity, such that 
$R$ is finite dimensional as a $k$-vector space and that the two-sided ideal
$\frak m = \text{Ker}(R \to k^m)$ is nilpotent.

Let $F$ be an object in a $k$-linear abelian category such as the category of coherent sheaves $\text{Coh}(X)$.
$F$ has a left $k^m$-module structure if and only if it has a form of a direct sum $F = \oplus_{i=1}^m F_i$.
An {\em $m$-pointed non-commutative (NC) deformation} $\tilde F$ of $F$ over $R \in \text{Art}_m$ 
is a flat left $R$-module object in the abelian category together with a fixed isomorphism 
\[
F \to k^m \otimes_R \tilde F.
\]
The infinitesimal deformation theory in the non-commutative setting is very similar to the commutative one, 
where the base ring $R$ is assumed to be commutative.  
In particular, there exists a {\em semi-universal} or {\em versal} deformation over a pro-object 
$\hat R \in \text{Art}\hat{}_m$ which is uniquely determined up to an isomorphism, where
$\text{Art}\hat{}_m$ is the category of augmented $k^m$-algebras $R$ with $\frak m = \text{Ker}(R \to k^m)$
such that 
$R/\frak m^n \in \text{Art}_m$ for all $n > 0$.

By taking the functor $\otimes \tilde F$ to an extension of algebras
\[
0 \to k_i \to R' \to R \to 0
\]
there is an extension of deformations
\[
0 \to F_i \to \tilde F' \to \tilde F \to 0
\]
where $k_i$ is an ideal which is isomorphic to $k$ 
and is annihilated by all components of $k^m$ except the $i$-th component.
Therefore the deformations of $F$ are obtained by {\em extensions} of the $F_i$.

If $\text{End}(F) \cong k^m$, then $F$ is said to be a {\em simple collection} and the theory is particularly 
simple (\cite{NC multi} Lemmas 4.4 and 4.5 with Corollary 4.6).
In particular we have $\text{End}(\tilde F) \cong R$ if $\tilde F$ is obtained by a succession of 
non-trivial extensions of the $F_i$, e.g., the versal deformation.

For example, if $F = \mathcal{O}_X(-1)$ on $X = \mathbf P(1,1,4)$ with $m = 1$, then the 
versal deformation $\tilde F$ is obtained as a universal extension 
\[
0 \to \mathcal{O}_X(-1)^3 \to \tilde F \to \mathcal{O}_X(-1) \to 0.
\]

%%%%%%%%%%%%%%%%%%%%%%%
\subsection{$2$-dimensional cyclic quotient singularity}

The versal deformation of a divisorial sheaf on a surface with a cyclic quotient singularity is determined by
Karmazyn-Kuznetsov-Shinder \cite{KKS} (Lemma 3.13, Theorem 3.16, Proposition 6.7):

\begin{Thm}\label{KKS}
Let $X = \frac 1r(1,a)$ be a quotient singularity of dimension $2$, where $0 < a < r$ and $(r,a) = 1$, and let 
$C \subset X$ be the image of the coordinate divisor corresponding to the weight $1$.
Let $r/(r-a) = [c_1,\dots,c_l]$ be an expansion to continued fractions.
Then the versal NC deformation $\tilde F$ of a divisorial sheaf $F = \mathcal{O}_X(-1) = \mathcal{O}_X(- C)$ 
is a locally free sheaf of rank $r$ on $X$, and the 
parameter algebra of the deformation is isomorphic to a {\em Kalck-Karmazyn algebra} 
$R = k\langle z_1,\dots,z_l \rangle/I$, 
where $I$ is a two-sided ideal in a non-commutative polynomial algebra generated by
\[
\begin{split}
&z_j^{c_j}, \quad \forall j, \\ 
&z_jz_k, \quad j < k, \\ 
&z_j^{c_j-1}z_{j-1}^{c_{j-1}-2} \dots z_{k+1}^{c_{k+1}-2}z_k^{c_k-1}, \quad k < j.
\end{split}
\]
\end{Thm}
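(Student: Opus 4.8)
The statement is local around $P$, so I would first reduce to the case where $X=\frac 1r(1,a)$ is the affine cyclic quotient singularity itself, with local ring $S=k[u,v]^{\mathbf Z_r}$, and $F=\mathcal O_X(-1)$ is one of the basic reflexive rank-one modules over $S$. On such an affine $X$ all higher cohomology of coherent sheaves vanishes, so the local-to-global spectral sequence gives $\text{Ext}^p_X(F,F)\cong H^0(X,\mathcal{E}xt^p(F,F))$ for $p\ge 1$, a finite-length module supported at $P$, while $\text{End}(F)\cong k$ since $F$ is reflexive of rank $1$. By the general theory of (single-pointed) NC deformations, the parameter algebra of the versal deformation is the pro-representing hull $\hat R_F$, whose cotangent space is dual to $\text{Ext}^1_X(F,F)$ and whose relations are the obstruction series, i.e.\ are governed by the $A_\infty$-structure (Massey products) on the DG algebra $R\text{Hom}_X(F,F)$. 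Thus everything reduces to: (i) compute $R\text{Hom}_X(F,F)$ together with its $A_\infty$-structure; (ii) read off the hull; (iii) check that the resulting sheaf is locally free of rank $r$.

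For (i) I would pass to the minimal resolution $\pi\colon\tilde X\to X$, whose exceptional locus is the chain of $\mathbf P^1$'s $E_1,\dots,E_l$ dictated by the Hirzebruch--Jung continued fraction $r/(r-a)=[c_1,\dots,c_l]$ (so $E_j^2=-c_j$). A suitable rounded pullback $\tilde F$ of $F$ is then a genuine line bundle on $\tilde X$ with $\pi_*\tilde F\cong F$ and $R^{>0}\pi_*\tilde F=0$, so that $F\cong R\pi_*\tilde F$; applying Grothendieck duality for $\pi$ (the discrepancies of a quotient surface singularity lie in $(-1,0]$, so the relative dualizing sheaf is a controlled exceptional twist) one rewrites $R\text{Hom}_X(F,F)$ in terms of cohomology on $\tilde X$ of line bundles supported, up to exceptional twists, along the chain. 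This should yield a basis $z_1,\dots,z_l$ of $\text{Ext}^1_X(F,F)$ --- one class per link of the chain, $z_j$ being represented by the self-extension of $F$ that is non-split exactly along $E_j$ --- and a description of $\text{Ext}^2_X(F,F)$ as the target of the relevant Yoneda and Massey pairings. (An equivalent route I would keep in reserve is to replace $\tilde X$ by the skew group algebra $k[u,v]\rtimes\mathbf Z_r\cong\text{End}_S(\bigoplus_i M_i)$, which has global dimension $2$, and compute the relevant $\text{Ext}^\bullet$ and its products representation-theoretically via Auslander--Reiten theory.)

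Step (ii) is where the exact shape of the relations is produced. The quadratic relations are the ``visible'' part: $z_jz_k=0$ for $j<k$ because the two extensions sit on exceptional curves met in increasing order along the chain and their Yoneda composite has nowhere to live, while $z_j^{c_j}=0$ records that a line bundle on the $(-c_j)$-curve $E_j$ can absorb only $c_j-1$ successive self-extensions before the next one splits off --- exactly as in the motivating example $\mathbf P(1,1,4)$, where all $c_j=2$ and $R_F\cong k[x_1,x_2,x_3]/(x_1,x_2,x_3)^2$. The ``long'' relations $z_j^{c_j-1}z_{j-1}^{c_{j-1}-2}\cdots z_{k+1}^{c_{k+1}-2}z_k^{c_k-1}$ for $k<j$ are \emph{not} Yoneda-quadratic: they are higher Massey products that encode how a chain of extensions running back down from $E_j$ to $E_k$, with each intermediate curve contributing a factor of degree $c_i-2$, eventually becomes trivial. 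I would compute these by choosing an explicit minimal $A_\infty$-model of $R\text{Hom}_X(F,F)$ (via an injective or bar model built from the exceptional chain) and tracking the iterated products. Once the ideal is identified one checks there are no further relations via the dimension count $\dim_k\bigl(k\langle z_1,\dots,z_l\rangle/I\bigr)=r$, an elementary induction on $l$ using the recursion for Hirzebruch--Jung fractions; finiteness of this dimension also shows that the pro-hull is an honest finite-dimensional algebra, hence that the versal NC deformation exists as an actual sheaf $\tilde F$ on $X$.

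For (iii): since $\tilde F$ is flat over $R=R_F$ with $\tilde F\otimes_R k\cong F$, at the generic point $\tilde F$ is free over $R$ of rank $\text{rank}\,F=1$, so $\text{rank}_{\mathcal O_X}\tilde F=\dim_k R=r$; near $P$ one checks, using the explicit iterated universal extension of $F$ by copies of itself, that the resulting module over the local ring has no self-extensions and is in fact free of rank $r$ --- the surface-singularity version of what one sees directly on $\mathbf P(1,1,d)$. The main obstacle is step (ii): extracting precisely the stated ``long'' relations forces one to compute the genuine $A_\infty$/Massey-product structure on $R\text{Hom}_X(F,F)$, not merely its cohomology ring, and to organize the continued-fraction bookkeeping so that exactly those monomials, and no others, appear.
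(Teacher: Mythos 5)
Your overall strategy (compute $R\text{Hom}_X(F,F)$ together with its $A_\infty$-structure and read off the pro-representing hull) is a legitimate route in principle, but the proposal contains a concrete error and leaves the actual content of the theorem uncomputed. The error: you have conflated the two dual Hirzebruch--Jung continued fractions. The minimal resolution of $\frac 1r(1,a)$ is a chain of $m$ curves with $E_i^2=-d_i$ where $r/a=[d_1,\dots,d_m]$ (the convention used in the paper; check $A_{n-1}=\frac 1n(1,n-1)$, whose resolution $[2,\dots,2]$ is the expansion of $n/(n-1)=r/a$). The generators $z_1,\dots,z_l$ of the Kalck--Karmazyn algebra are indexed by the \emph{dual} fraction $r/(r-a)=[c_1,\dots,c_l]$, and by Riemenschneider duality $l=\sum_i(d_i-2)+1$, which differs from $m$ in general. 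For $\frac 14(1,1)$ (the vertex of $\mathbf P(1,1,4)$) there is a single $(-4)$-curve but $\dim\text{Ext}^1(F,F)=3$; so your dictionary ``one class $z_j$ per link of the exceptional chain, $E_j^2=-c_j$, and $z_j^{c_j}=0$ because a line bundle on the $(-c_j)$-curve absorbs only $c_j-1$ self-extensions'' is false, and the geometric derivation of the relations built on it collapses. Moreover, you yourself flag the extraction of the long relations $z_j^{c_j-1}z_{j-1}^{c_{j-1}-2}\cdots z_k^{c_k-1}$ from the Massey products as ``the main obstacle''; but that computation \emph{is} the theorem, so the presentation of $R$ is not established by the proposal. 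Two smaller gaps: after reducing to the affine singularity one cannot have $\text{End}(F)\cong k$ (it contains the whole coordinate ring), so the NC deformation theory of a simple object does not apply there; one must pass to a compactification with $H^{>0}(\mathcal O)=0$, as the paper does. And local freeness of rank $r$ at $P$ does not follow from flatness over $R$ plus a rank count; it requires showing that the restriction of the iterated extension to the fundamental cycle is trivial.

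For comparison, the paper does not reprove the presentation of $R$ either --- that is cited from \cite{KKS} and \cite{KK}. What it does prove, by elementary means, is the construction and the rank statement: it builds $\tilde F=f_*\tilde L_0$, where $\tilde L_0$ is an iterated \emph{universal extension} of the semi-orthogonal exceptional line bundles $L_0,\dots,L_m$ on the minimal resolution (indexed by the $r/a$ chain, with explicit degrees $\deg_{E_j}L_i$), proves local freeness of rank $r$ by induction on the restrictions $G_i\vert_{E_j}$, and deduces versality from $\text{Hom}(\tilde F,F)\cong k$. If you want a self-contained proof along your lines, you would still have to either reproduce the Kalck--Karmazyn/KKS computation of the relations or replace it by an actual $A_\infty$ calculation organized by the correct (dual) continued fraction.
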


We note that there is another expansion $r/a = [d_1, \dots, d_m]$ to continued fractions 
corresponding to the Hirzebruch-Jung string
of exceptional divisors on the minimal resolution of $X$.

We note also that we consider a divisorial sheaf $F = \mathcal{O}_X(-1)$ corresponding to a divisor with 
negative coefficient instead of the positive one $\mathcal{O}_X(a)$
as in \cite{KKS} in order to simplify the notation.

$F$ is a simple collection with one element on a suitable compactification of $X$.
In the following, we construct
the versal NC deformation $\tilde F$ of $F = \mathcal{O}_X(-1)$ using an argument from \cite{KKS}, and
deduce its local freeness by more direct elementary method.

\vskip 1pc

Let $X \subset X'$ be a compactification to a normal projective surface 
which has only one singularity and 
vanishing cohomologies $H^i(X',\mathcal O_{X'}) \cong 0$ for $i > 0$
e.g., a rational surface, let $C'$ be the closure of $C$, and let $F' =  \mathcal{O}_{X'}(-C')$ be 
the corresponding divisorial sheaf.
Then we have $\mathcal End(F') \cong \mathcal O_{X'}$, 
and the extension sheaves $\mathcal Ext^i(F',F')$ for $i > 0$
are skyscraper sheaves supported at the singular point. 
Then we have $\text{Ext}^i_{X'}(F',F') \cong H^0(X',\mathcal Ext^i(F,F))$. 
Therefore we can consider the deformations of $F$ and $F'$ to be the same, and
the process to obtain the versal NC deformation is the same on $X$ and $X'$.

More generally, even if $X'$ has other isolated singularities, we have the same argument if 
$F'$ is locally free or dual free at these points, 
because $\mathcal End(F') \cong \mathcal O_{X'}$ and $\mathcal Ext^i(F',F') \cong 0$ for $i > 0$ 
there by Lemma \ref{local vanishing}.

Let $f: Y \to X = \frac 1r(1,a)$ be the minimal resolution, 
let $E_1,\dots,E_m$ be the exceptional curves, which are isomorphic to $\mathbf P^1$'s, and let 
$C' = f_*^{-1} C$ be the strict transform, so that 
$(C', E_m,\dots,E_1)$ is a chain of curves in this order.
$\sum_{j=1}^m E_j$ is the fundamental cycle of the singularity, and we have
$\frak m_x \mathcal{O}_Y = \mathcal{O}_Y(-\sum_{j=1}^m E_j)$, where $x \in X$ is the singular point.
Let $L_m = \mathcal{O}_Y(-C')$, and define $L_{j-1} = L_m(-E_m-\dots-E_j)$ for $1 \le j \le m$.
Thus we have $f_*L_j = F$ for all $0 \le j \le m$.

First we consider $m+1$ pointed NC deformations of $L = \oplus_{j=0}^m L_j$ on $Y$, and let 
$\tilde L = \oplus_{j=0}^m \tilde L_j$ be the versal NC deformation.
We note that $L$ is not a simple collection, but the NC deformation theory is not complicated in this case, 
because the $L_i$ are exceptional objects which are semi-orthogonal, i.e., the vanishing
\[
\mathbb R\text{Hom}(L_j,L_{j'}) \cong \mathbb R\Gamma(Y, \mathcal{O}_Y(-E_j - \dots - E_{j'+1}) \cong 0
\]
holds for $j > j'$, and therefore non-trivial extensions of the $L_j$ occur only in one direction.
We note also that all sheaves appearing in the process below are locally free.

The direct summand $\tilde L_0$ is constructed in the following way.
It is the largest one among the $\tilde L_i$.
It is obtained by iterated {\em universal extensions} $G_j$ starting from $G_0 = L_0$ and defined inductively by  
\begin{equation}\label{universal}
0 \to Ext^1(G_j, L_{j+1})^* \otimes L_{j+1} \to G_{j+1} \to G_j \to 0.
\end{equation}
We note that any extension of $G_j$ by $L_{j+1}$ is obtained by a pull-back from $G_{j+1}$.
We also note that $Ext^1(G_j, L_i) = 0$ for all $i \le j$.
Therefore we have $G_m = \tilde L_0$.

Another summand $\tilde L_i$ for $i > 0$ is similarly constructed from $L_i$ by successively taking universal extensions
by $L_{i+1}, \dots, L_m$, but we do not need it.

\begin{Lem}\label{3.2}
Let $\tilde F = f_*\tilde L_0$.
Then $\tilde F$ is a locally free sheaf of rank $r$ at the singular point.
\end{Lem}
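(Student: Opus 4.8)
The plan is to show that $\tilde F = f_*\tilde L_0$ is locally free of rank $r$ by computing its rank on the minimal resolution and then checking that the pushforward is reflexive and locally free at $P$. First I would observe that $\tilde L_0$ is a locally free sheaf on $Y$ whose rank is the number of successive universal extensions contributing a nonzero term, i.e., one checks from \eqref{universal} that
\[
\text{rank}(\tilde L_0) = 1 + \sum_{j=0}^{m-1} \dim \text{Ext}^1(G_j, L_{j+1}).
\]
The key point is that along the chain $(C', E_m, \dots, E_1)$, each $\text{Ext}^1(G_j, L_{j+1}) = H^1(Y, G_j^\vee \otimes L_{j+1})$ is computed from the intersection numbers $-c_j$ of the self-intersections of the exceptional curves, via the continued fraction expansion $r/(r-a) = [c_1,\dots,c_l]$. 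I would make this precise by a Riemann--Roch / relative duality calculation on the chain of rational curves, tracking how the rank grows. The outcome is that the total rank equals $r$; this is exactly the numerical content of the statement that the Kalck--Karmazyn algebra has $k$-dimension $r$ (since $\tilde F$ is flat over $R$ and $F = \mathcal O_X(-C)$ has rank $1$).

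Next I would show that $\tilde F = f_*\tilde L_0$ is locally free at $P$. Since $f$ is the minimal resolution of the quotient singularity, $R^1 f_* L_j = 0$ for each $j$ (the $L_j$ are negative twists adapted to the fundamental cycle, and $H^1$ of line bundles on a chain of rational curves resolving a cyclic quotient singularity vanishes in the relevant range), so $Rf_* \tilde L_0 = f_* \tilde L_0$ is a sheaf, and it has rank $r$ generically. To see it is actually locally free of rank exactly $r$ at $P$, I would use the standard criterion: $f_* \tilde L_0$ is reflexive (as a pushforward of a locally free sheaf under a proper birational morphism from a normal surface), and one computes its minimal number of generators at $P$ by $\dim_k (f_* \tilde L_0 \otimes k(P))$, comparing with $H^0(E, \tilde L_0 \otimes \mathcal O_E)$ where $E$ is the fundamental cycle. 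By the construction via universal extensions, $\tilde L_0$ restricted to the exceptional locus is globally generated with exactly $r$ sections, forcing $f_* \tilde L_0$ to be generated by $r$ elements; combined with rank $r$ and reflexivity on a surface, this gives local freeness at $P$.

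The main obstacle I anticipate is the bookkeeping in the rank computation: one must verify that the universal extensions \eqref{universal} are set up so that no "collapse" occurs, i.e., that the maps $G_{j+1} \to G_j$ genuinely have the predicted kernel ranks and that $\text{Ext}^1(G_j, L_i) = 0$ for all $i \le j$ as claimed in the excerpt, so that the process terminates with $G_m = \tilde L_0$ and nothing is lost. This is where the continued fraction combinatorics of $r/(r-a) = [c_1,\dots,c_l]$ enters in an essential way, and matching the total $\sum \dim \text{Ext}^1(G_j,L_{j+1})$ with $r-1$ requires the identity that the "denominators" along the continued fraction accumulate to $r$. I would handle this either by induction on the length $l$ of the continued fraction, peeling off one curve at a time, or by invoking the explicit description of the Kalck--Karmazyn algebra in Theorem \ref{KKS}, whose $k$-dimension is manifestly $r$; flatness of $\tilde F$ over $R = \text{End}(\tilde F)$ then pins down $\text{rank}(\tilde F) = \dim_k R \cdot \text{rank}(F) / \dim_k(\text{something})$—more cleanly, $\tilde F \otimes_R k \cong F$ and flatness give $\text{rank}(\tilde F) = \dim_k(R/\mathfrak m \cdot \text{fiber})$, which reduces to the combinatorial identity above. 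A clean way to finish is: local freeness plus $\tilde F|_X$-type flatness arguments already appear in \cite{KKS}, so the remaining burden is purely the rank count.
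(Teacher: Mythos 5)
Your outline follows the same architecture as the paper's proof (compute the rank through the tower of universal extensions, then deduce local freeness from the behaviour of $\tilde L_0$ on the exceptional locus), but the essential content is asserted rather than proved. The whole difficulty of the lemma is the claim you defer as ``bookkeeping'': that the restriction of $\tilde L_0$ to the fundamental cycle $Z=\sum_j E_j$ is trivial of rank exactly $r$. A priori the universal extension $0 \to \text{Ext}^1(G_j,L_{j+1})^*\otimes L_{j+1} \to G_{j+1}\to G_j\to 0$ could restrict to each $E_k$ as an extension with negative- or positive-degree summands, destroying both global generation and the count $h^0=\text{rank}$. The paper rules this out by an induction that tracks the full matrix of degrees $\deg_{E_j}(L_i)$, shows the restriction maps $\text{Ext}^1_{Y'}(G_i,L_{i+1})\to\text{Ext}^1_{E_{i+1}}(G_i|_{E_{i+1}},L_{i+1}|_{E_{i+1}})$ are bijective (using $\text{Ext}^1(G_i,L_i)=0$ and a Serre-duality vanishing for the $H^2$ term), and obtains the recursion $r_{i+1}=d_{i+1}r_i-r_{i-1}$ whose terminal value is $r$. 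None of this appears in your proposal, and your fallback --- invoking Theorem \ref{KKS} and flatness over the Kalck--Karmazyn algebra --- is circular: the statement that the versal NC deformation is locally free of rank $r$ \emph{is} part of that theorem, and the point of the lemma is to give an independent direct proof.

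Two further concrete slips. First, you attach the self-intersection numbers of the exceptional curves to the continued fraction $r/(r-a)=[c_1,\dots,c_l]$; in fact the minimal resolution has $m$ exceptional curves with $E_j^2=-d_j$ where $r/a=[d_1,\dots,d_m]$, and it is this expansion (not the dual one, which governs the algebra $R$) that drives the rank recursion. Second, the claim that $f_*\tilde L_0$ is reflexive ``as a pushforward of a locally free sheaf under a proper birational morphism from a normal surface'' is false in general (e.g.\ $f_*\mathcal{O}_Y(-E)=\mathfrak m_P$ is not reflexive); fortunately you do not need it, since a torsion-free module of rank $r$ generated by $r$ elements over a normal local ring is automatically free, the kernel of $\mathcal{O}^{\oplus r}\twoheadrightarrow f_*\tilde L_0$ being a rank-zero subsheaf of a free sheaf. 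But the generation by $r$ elements again rests on the unproved triviality of $\tilde L_0|_Z$ together with the vanishing $R^1f_*\mathcal{O}_Y(-nZ)=0$, so the gap identified above is the one that must be closed.
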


It is proved in \cite{KKS} Proposition 6.7.
We give an alternative elementary direct proof.
The construction here is \lq\lq reversed'' using divisors with negative coefficients.
It is also a good example of the multi-pointed NC deformation theory.

\begin{proof}
It is sufficient to prove that the restriction of $\tilde L_0$ to the fundamental cycle is trivial:
\[
\tilde L_0 \otimes \mathcal O_{\sum_{j=1}^m E_j} \cong \mathcal{O}_{\sum_{j=1}^m E_j}^{\oplus r}.
\]
Indeed the generating sections of the right hand side can be extended to sections of $\tilde L_0$
since $R^1f_*\mathcal O_Y(-n\sum_{j=1}^m E_j) = 0$ for all $n > 0$.

Let $r/a = [d_1,\dots,d_m]$ be an expansion to continued fractions.
Then by \cite{KKS} Lemma 3.3, we have
\[
r = \text{det} \left( \begin{matrix}
d_1 & 1 & 0 & 0 & \dots & 0 \\
1 & d_2 & 1 & 0 & \dots & 0 \\
0 & 1 & d_3 & 1 & \dots & 0 \\
\dots & \dots & \dots & \dots & \dots & \dots \\
0 & 0 & 0 & 0 & \dots & d_m
\end{matrix} \right), \quad 
a = \text{det} \left( \begin{matrix}
d_2 & 1 & 0 & 0 & \dots & 0 \\
1 & d_3 & 1 & 0 & \dots & 0 \\
0 & 1 & d_4 & 1 & \dots & 0 \\
\dots & \dots & \dots & \dots & \dots & \dots \\
0 & 0 & 0 & 0 & \dots & d_m
\end{matrix} \right).
\]
The self intersection numbers are given by $d_i = - E_i^2 \ge 2$, and 
\[
\text{deg}_{E_j}(L_i) = (a_{ij}) = 
\left( \begin{matrix}
d_1-1 & d_2-2 & d_3-2 & \dots & d_{m-2}-2 & d_{m-1}-2 & d_m-2 \\
-1 & d_2-1 & d_3-2 & \dots & d_{m-2}-2 & d_{m-1}-2 & d_m-2 \\
0 & -1 & d_3-1 & \dots & d_{m-2}-2 & d_{m-1}-2 & d_m-2 \\
\dots & \dots & \dots & \dots & \dots & \dots & \dots \\
0 & 0 & 0 & \dots & -1 & d_{m-1}-1 & d_m-2 \\
0 & 0 & 0 & \dots & 0 & -1 & d_m-1 \\
0 & 0 & 0 & \dots & 0 & 0 & -1 
\end{matrix} \right)
\]
for $0 \le i \le m$ and $1 \le j \le m$.

We shall prove by induction on $i$ ($1 \le i \le m$) that 
\[
G_i \otimes \mathcal O_{E_j} \cong \mathcal{O}_{E_j}^{\oplus r_i}
\]
for $j \le i$, where $r_i = \text{rank}(G_i)$ is given by
\[
r_i = \text{det} \left( \begin{matrix}
d_1 & 1 & 0 & 0 & \dots & 0 \\
1 & d_2 & 1 & 0 & \dots & 0 \\
0 & 1 & d_3 & 1 & \dots & 0 \\
\dots & \dots & \dots & \dots & \dots & \dots \\
0 & 0 & 0 & 0 & \dots & d_i
\end{matrix} \right)
\]
and that 
$G_i \otimes \mathcal O_{E_j}$ for $j > i$ are trivial extensions of the $L_k \otimes \mathcal O_{E_j}$ for $k \le i$.
We note that $L_k \otimes \mathcal O_{E_j} \cong \mathcal O_{\mathbf P^1}(d_j-1)$ if $j = k+1$, 
and $\cong \mathcal O_{\mathbf P^1}(d_j-2)$
if $j > k+1$.

Let $Y'$ be the compactification of $Y$ which coincides with $X'$ outside the quotient singularity.
For $i = 1$, the restriction map
\[
\text{Ext}^1_{Y'}(L_0,L_1) \to \text{Ext}^1_{E_1}(L_0 \otimes \mathcal O_{E_1},L_1 \otimes \mathcal O_{E_1})
\]
is bijective, because so is
\[
\begin{split}
&H^1(Y',L_1 \otimes L_0^*) \to H^1(E_1,L_1 \otimes L_0^* \otimes \mathcal O_{E_1}) \\
&\cong H^1(\mathbf P^1,\mathcal O_{\mathbf P^1}(-d_1)) \cong k^{d_1-1}
\end{split}
\]
since $H^k(Y',\mathcal O_{Y'}) = 0$ for $k=1,2$.
The corresponding extension on $\mathbf P^1$ is given by
\[
0 \to \mathcal O_{\mathbf P^1}(-1)^{d_1-1} \to \mathcal O_{\mathbf P^1}^{d_1} \to 
\mathcal O_{\mathbf P^1}(d_1-1) \to 0
\]
hence our assertion holds with $r_1 = d_1$.
The extensions on $E_j$ for $j \ge 2$ are trivial, because 
$L_1 \otimes L_0^* \otimes \mathcal O_{E_2} \cong \mathcal O_{\mathbf P^1}(-1)$ and 
$L_1 \otimes L_0^* \otimes \mathcal O_{E_j} \cong \mathcal O_{\mathbf P^1}$ for $j > 2$.

We assume that our assertion holds for $G_i$ and prove it for $G_{i+1}$.
Since $L_{i+1} \otimes \mathcal O_{E_j} \cong \mathcal O_{E_j}$ for $j \le i$, 
we have $G_{i+1} \otimes \mathcal O_{E_j} \cong \mathcal{O}_{E_j}^{\oplus r_{i+1}}$ for these $j$.
Since $L_{i+1} \otimes \mathcal O_{E_j} \cong \mathcal{O}_{E_j}(d_j-1)$ or $\mathcal{O}_{E_j}(d_j-2)$ for $j > i+1$,
our assertion holds for these $j$ too.
Therefore we have only to prove that 
$G_{i+1} \otimes \mathcal O_{E_{i+1}} \cong \mathcal{O}_{E_{i+1}}^{\oplus r_{i+1}}$.

We note that 
\[
G_i \otimes \mathcal O_{E_{i+1}} \cong \mathcal O_{\mathbf P^1}(d_{i+1}-2)^{r_i} \oplus 
\mathcal O_{\mathbf P^1}(d_{i+1}-1)^{r_i-r_{i-1}}
\]
by the induction hypothesis.
We also note that $\text{Ext}^1(G_i,L_i) \cong 0$,
since the extension (\ref{universal}) is universal and $L_i$ is exceptional.
The restriction map
\[
\text{Ext}^1_{Y'}(G_i,L_{i+1}) \to 
\text{Ext}^1_{E_{i+1}}(G_i \otimes \mathcal O_{E_{i+1}},L_{i+1} \otimes \mathcal O_{E_{i+1}})
\]
is bijective again, because so is
\[
H^1(Y',L_{i+1} \otimes G_i^*) \to H^1(E_{i+1},L_{i+1} \otimes G_i^* \otimes \mathcal O_{E_{i+1}})
\]
since 
\[
H^1(Y',L_{i+1}(-E_{i+1}) \otimes G_i^*) \cong \text{Ext}^1_{Y'}(G_i,L_i) \cong 0
\]
and 
\[
H^2(Y',L_{i+1}(-E_{i+1}) \otimes G_i^*) \cong H^0(Y',G_i \otimes L_i^* \otimes K_{Y'})^* \cong 0. 
\]
We have
\[
\text{Ext}^1_{E_{i+1}}(G_i \otimes \mathcal O_{E_{i+1}},L_{i+1} \otimes \mathcal O_{E_{i+1}})
\cong H^1(\mathbf P^1,\mathcal O_{\mathbf P^1}(-d_{i+1}+1)^{r_{i-1}} 
\oplus \mathcal O_{\mathbf P^1}(-d_{i+1})^{r_i-r_{i-1}})
\]
and the corresponding extension on $\mathbf P^1$ is given by
\[
\begin{split}
&0 \to \mathcal O_{\mathbf P^1}(-1)^{d_{i+1}-2} \to \mathcal O_{\mathbf P^1}^{d_{i+1}-1} \to 
\mathcal O_{\mathbf P^1}(d_{i+1}-2) \to 0, \\
&0 \to \mathcal O_{\mathbf P^1}(-1)^{d_{i+1}-1} \to \mathcal O_{\mathbf P^1}^{d_{i+1}} \to 
\mathcal O_{\mathbf P^1}(d_{i+1}-1) \to 0.
\end{split}
\]
Therefore our assertion holds with 
\[
\text{rank}(G_{i+1}) = r_{i-1}(d_{i+1}-1) + (r_i-r_{i-1})d_{i+1}
= d_{i+1}r_i - r_{i-1} = r_{i+1}.
\]
In particular, the vector bundle $\tilde L_0$ has rank $r$ and is restricted to a trivial bundle on each $E_j$.
This was to be proved.
\end{proof}

The following is an alternative proof of a result proved in \cite{KKS} Proposition 6.7 (iv):

\begin{Lem}\label{3.3}
$\tilde F = f_*\tilde L_0$ is the versal $1$-pointed NC deformation of $F = \mathcal{O}_X(-1)$.
\end{Lem}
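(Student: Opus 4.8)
The plan is to show that $\tilde F$ is obtained from the simple sheaf $F$ by a succession of universal self-extensions with no remaining infinitesimal deformations, and then to invoke the characterization of the $1$-pointed versal NC deformation of a simple sheaf in \cite{NC multi}. As a preliminary, note that $F$ is simple on a suitable projective compactification $X'$: since $F$ is reflexive of rank $1$ one has $\mathcal{H}om_{X'}(F,F)\cong\mathcal{O}_{X'}$, hence $\text{End}_{X'}(F)=H^0(\mathcal{O}_{X'})=k$; moreover every $\mathcal{E}xt^i_X(F,F)$ with $i>0$ is supported at $P$, so the NC deformation theory of $F$ is the same on $X$ and on $X'$ and depends only on the germ $\frac{1}{r}(1,a)$.

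First I would produce the successive-extension structure. Because the singularity is rational, $R^if_*\mathcal{O}_Y=0$ for $i>0$, and each $L_j$ satisfies $f_*L_j\cong F$ with $R^if_*L_j=0$ for $i>0$; hence $Rf_*=f_*$ on all the sheaves $G_j$ appearing in (\ref{universal}). Applying $f_*$ to (\ref{universal}) then yields short exact sequences on $X$
\[
0 \to \text{Ext}^1_Y(G_j,L_{j+1})^*\otimes F \to f_*G_{j+1} \to f_*G_j \to 0,
\]
starting from $f_*G_0=F$ and ending with $f_*G_m=\tilde F$. Thus $\tilde F$ is a successive extension of copies of $F$, $\tilde F\otimes_R k\cong F$ for $R:=\text{End}_X(\tilde F)$, and by \cite{NC multi} such a successive extension of a simple sheaf is automatically flat over $R$; so $\tilde F$ is a flat $1$-pointed NC deformation of $F$ over $R$.

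It then remains to prove versality. By the characterization in \cite{NC multi} it suffices to check that the extensions above are \emph{universal} on $X$ — equivalently, that $f_*$ induces isomorphisms $\text{Ext}^1_Y(G_j,L_{j+1})\cong\text{Ext}^1_X(f_*G_j,F)$ — and that the construction terminates, i.e. $\text{Ext}^1_X(\tilde F,F)=0$. For the termination, $\tilde F$ is locally free of rank $r$ by the preceding lemma, so $\mathcal{E}xt^{>0}_X(\tilde F,F)=0$ and the local-to-global spectral sequence gives $\text{Ext}^1_X(\tilde F,F)\cong H^1(X',\mathcal{H}om_{X'}(\tilde F,F))$; filtering $\mathcal{H}om(\tilde F,F)$ by the graded pieces of the successive extension (each a reflexive rank-$1$ sheaf, isomorphic to $\mathcal{O}_X$ off $P$) reduces this to vanishings of the form $H^1(X',\mathcal{O}_{X'})=0$ and their twisted analogues on a good $X'$, which mirror the vanishings $\text{Ext}^1_Y(\tilde L_0,L_j)=0$ holding for every $j$ since $\tilde L_0$ is the $L_0$-summand of the versal deformation of the whole collection $L=\bigoplus_j L_j$ on $Y$.

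The main obstacle is the last step. Because $f$ is the minimal resolution of a \emph{non-crepant} cyclic quotient singularity, $Rf_*$ is not fully faithful on the relevant subcategories, so one cannot simply transport $\text{Ext}^1_Y(\tilde L_0,L_0)=0$ to $X$; the real content is to express $\text{Ext}^\bullet_X(f_*G_j,F)$ in terms of the exceptional collection $\{L_j\}$ on $Y$, keeping track of the partial flags $\langle L_0,\dots,L_j\rangle$, so that the governing $\text{Ext}^1$-groups are correctly identified and $\text{Ext}^1_X(\tilde F,F)$ is forced to vanish. This is precisely the local computation carried out on the germ $\frac{1}{r}(1,a)$ in \cite{KKS} (Proposition 6.7(iv)), and it uses the explicit description of the parameter algebra via \cite{HiP}, \cite{KK}. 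Alternatively, granting Theorem \ref{KKS}, it is enough to identify $R=\text{End}_X(\tilde F)$ with the Kalck--Karmazyn algebra and compare dimensions: the classifying map from the versal base surjects onto $R$, and equality of dimensions then forces it to be an isomorphism, whence $\tilde F$ is versal.
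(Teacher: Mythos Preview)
Your outline has the correct shape --- show that $\tilde F$ is an iterated non-trivial extension of copies of $F$ with $\text{Ext}^1_{X'}(\tilde F,F)=0$, then invoke the criterion in \cite{NC multi} --- but you miss the one observation that makes the paper's argument short, and this is precisely why your last step becomes a ``main obstacle'' that you defer to \cite{KKS} or to a dimension comparison via Theorem~\ref{KKS}.

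The point you overlook is that local freeness of $\tilde F$ (the preceding lemma) yields $f^*\tilde F \cong \tilde L_0$: the counit $f^* f_* \tilde L_0 \to \tilde L_0$ is a map of locally free sheaves of the same rank, an isomorphism off the exceptional locus and an isomorphism along the fundamental cycle since $\tilde L_0$ restricts trivially there. Combined with $Rf_* L_0 = F$, adjunction transports the entire computation back to $Y'$:
\[
\text{Hom}_{X'}(\tilde F, F) \;\cong\; \text{Hom}_{Y'}(\tilde L_0, L_0).
\]
On $Y'$ the semi-orthogonality $\text{Hom}_{Y'}(L_j,L_0)=0$ for $j>0$ makes each map $\text{Hom}_{Y'}(G_j,L_0)\to\text{Hom}_{Y'}(G_{j+1},L_0)$ bijective, whence $\text{Hom}_{Y'}(\tilde L_0,L_0)\cong k$. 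The same adjunction, together with the already recorded vanishing $\text{Ext}^1_{Y'}(G_j,L_i)=0$ for $i\le j$, disposes of $\text{Ext}^1$. With $\text{Hom}_{X'}(\tilde F,F)\cong k$ (no trivial extension in the tower) and no further extension possible, versality follows from \cite{NC multi}.

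Your concern that $Rf_*$ is not fully faithful because the resolution is non-crepant is beside the point: no comparison of $D^b(X')$ with $D^b(Y')$ on large subcategories is needed, only the adjunction identity for the single locally free object $\tilde F$. Likewise there is no need to verify that each intermediate pushforward is itself a universal extension on $X$; it suffices to check the two conditions $\text{Hom}=k$ and $\text{Ext}^1=0$ for the final object $\tilde F$, and both reduce to $Y'$ by the trick above.
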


\begin{proof}
We use \cite{NC multi} Corollary 4.11.
We consider on a compactification $X'$ of $X$ which has only one singularity and 
such that $H^i(X',\mathcal O_{X'}) = 0$ for $i > 0$.
We denote by $F'$ and $\tilde F'$ respectively the extensions of $F$ and $\tilde F$ to $X'$ 
defined by the closure of the curve $C$.
Since $\tilde F'$ is locally free, it has no more extension by $F'$.
Indeed we have 
\[
Ext^1(\tilde F', F') \cong H^0(X', \mathcal Ext^1(\tilde F', F')) \cong 0.
\]
We note here that $H^1(X', \mathcal Hom(\tilde F', F')) \cong 0$, because $\mathcal Hom(\tilde F', F')$ is an extension of 
$\mathcal Hom(F', F') \cong \mathcal O_{X'}$. 
Therefore the versality follows if $\text{Hom}_{X'}(\tilde F, F) \cong k$, which says that there is no trivial extension in the 
process to obtain $\tilde F$ (cf. \cite{NC multi}).

Since $\tilde F$ is locally free, we have $f^*\tilde F \cong \tilde L_0$, hence 
\[
\text{Hom}_{X'}(\tilde F, F) \cong \text{Hom}_{Y'}(\tilde L_0, L_0)
\]
where $Y'$ is the corresponding compactification of $Y$.
Since $\text{Hom}_{Y'}(L_j,L_0) = 0$ for $j > 0$, 
$\text{Hom}_{Y'}(G_j, L_0) \to \text{Hom}_{Y'}(G_{j+1},L_0)$ are bijective for all $j$, hence 
$\text{Hom}_{Y'}(\tilde L_0, L_0) \cong k$.
\end{proof}

%%%%%%%%%%%%%%%%%%%%%%%
\subsection{NC deformation on weighted projective plane}

Let $X = \mathbf P(a,b,c)$ be a weighted projective plane, where $(a,b,c) = (a_1,a_2,a_3)$ 
are pairwise coprime positive integers.
There are at most $3$ singular points 
$P_1 \cong \frac 1a(b,c)$, $P_2 \cong \frac 1b(a,c)$ and $P_3 \cong \frac 1c(a,b)$.
We take a positive integer $m$ such that $a \vert m$ and $m \equiv 1 \mod c$.

We define divisorial sheaves 
$L_3 = \mathcal{O}_X(-mC_{13})$, $L_2 = \mathcal{O}_X(-mC_{13} - C_{32})$
and $L_1 = \mathcal{O}_X(-mC_{13} - C_{32} - C_{21})$, where $C_{jj'}$ is the line joining $P_j,P_{j'}$.
Then $L_3$ is invertible except at $P_3$, $L_2$ is invertible except at $P_3, P_2$ and dual invertible at $P_3$, 
and $L_1$ is invertible except at $P_3, P_2, P_1$ and dual invertible at $P_3, P_2$.  

Let $F_i$ be the versal NC deformation of $L_i$ for $i = 1,2,3$, 
and let $R_i = \text{End}(F_i)$ be the parameter algebra of the deformation.
We apply the results of the previous subsection on $F$ and $\tilde F$ to $L_i$ and $F_i$, respectively.
Since $L_i$ is locally free or dual free except at $P_i$, 
we have $\text{Ext}^j(L_i,L_i) \cong \mathcal Ext^j(L_i,L_i)_{P_i}$, and 
$F_i$ is locally free or dual free everywhere (Lemmas \ref{local vanishing}, \ref{3.2} and \ref{3.3}).

We have a semi-orthogonal decomposition due to Karmazyn-Kuznetsov-Shinder \cite{KKS}:

\begin{Thm}[\cite{KKS} Example 6.11]
\[
D^b(X) = \langle L_1,L_2,L_3 \rangle = \langle \overline F_1,\overline F_2,\overline F_3 \rangle 
\cong \langle D^b(R_1), D^b(R_2), D^b(R_3) \rangle
\]
where the $\overline F_i$ are the abbreviations of the $\overline{\langle F_i \rangle} \cap D^b(X)$.
\end{Thm}

We give a sketch of the proof.
We note here again that there is a slight difference from \cite{KKS} in the construction of the $F_i$; 
we use anti-effective divisors instead of effective divisors to simplify the notation.

Let $f: Y \to X$ be the minimal resolution, and 
let $E_{i,j}$ ($i = 1,2,3$, $j = 1,\dots,m_i$) be the exceptional curves above $P_i$ such that the curves
\[
C'_{13}, E_{3,m_3}, \dots, E_{3,1}, C'_{32}, E_{2,m_2}, \dots, E_{2,1}, C'_{21}, E_{1,m_1}, \dots, E_{1,1}
\]
form a cycle of $\mathbf P^1$'s on $Y$ in this order, where ${}'$ means the strict transform by $f$.
The sum of these curves belongs to the anti-canonical linear system $\vert -K_Y \vert$.

We define $L_{3,m_3} = \mathcal{O}(-mC'_{13})$, $L_{2,m_2} = L_{3,0}(- C'_{32})$, $L_{1,m_1} = L_{2,0}(- C'_{21})$, 
and $L_{i,j-1} = L_{i,m_i}(-E_{i,m_i}-\dots-E_{i,j})$.
Then the $L_{i,j}$ are exceptional objects on $Y$ and 
\[
\mathbb R\text{Hom}(L_{i,j},L_{i',j'}) = 0, \quad \text{ if } i > i', \text{ or } i = i' \text{ and } j > j'.
\]
Let $\tilde L_i$ ($i=1,2,3$) be the versal $m_i+1$-pointed NC deformation 
of $\oplus_{j=0}^{m_i} L_{i,j}$.
Since they are also $k^{m_i+1}$-modules, we can write
$\tilde L_i = \oplus_{j=0}^{m_i} \tilde L_{i,j}$. 

Then the versal NC deformation $F_i$ of $L_i$ is given as
$F_i = f_*\tilde L_{i,0}$.
As shown above, all $F_i$ are locally free or dual free.
More precisely, $F_3$ is locally free, $F_2$ is locally dual free at $P_3$, and $F_1$ is locally dual free at $P_3$ and 
$P_2$.
The semi-orthogonal decomposition of $D^b(X)$ is a consequence a semi-orthogonal decomposition on $Y$
(\cite{KKS}):
\[
D^b(Y) = \langle L_{1,0},\dots,L_{1,m_1},L_{2,0},\dots,L_{2,m_2},L_{3,0},\dots,L_{3,m_3} \rangle.
\]

%%%%%%%%%%%%%%%%%%%%%%%
%%%%%%%%%%%%%%%%%%%%%%%
%%%%%%%%%%%%%%%%%%%%%%%
\section{Main theorem: Wahl singularity case}

The following is a modification of \cite{H} Theorem 1.1.
We note here again that the sign change from positive to negative makes the result simpler. 

\begin{Thm}\label{Hacking}
Let $X$ be a normal projective variety of dimension $2$ such that $H^p(X,\mathcal{O}_X)= 0$ for $p > 0$.
Assume the following conditions:

(a) There is a quotient singularity $P \in X$ of type $\frac 1{r^2}(1, ar-1)$ for positive integers $a,r$ such that 
$0 < a < r$ and $(r,a) = 1$.

(b) There exists a divisorial sheaf $A = \mathcal{O}_X(-D)$ on $X$ for a Weil divisor $D$ such that 
$D$ is equivalent to a toroidal coordinate axis in an analytic neighborhood of $P$, and that $A$ is 
invertible or dual invertible at other singularities of $X$.

(c) There is a projective flat deformation $f: \mathcal X \to \Delta$ over a disk $\Delta$ with a coordinate $t$
such that $X = f^{-1}(0)$,  
$f^{-1}(t)$ is smooth for $t \ne 0$, and that $\mathcal X$ is $\mathbf Q$-Gorenstein at $P$. 

Then, after replacing $\Delta$ by a smaller disk around $0$ and after a finite base change 
by taking roots of the coordinate $t$, 
there exists a maximally Cohen-Macaulay sheaf $\mathcal E$ of rank $r$ on $\mathcal X$ 
which satisfies the following conditions:

\begin{enumerate}
\item $\mathcal E \otimes \mathcal O_X \cong A^{\oplus r}$.

\item $E := \mathcal E \otimes \mathcal O_Y$ is an exceptional vector bundle on $Y = f^{-1}(t)$ for $t \ne 0$.
\end{enumerate}
\end{Thm}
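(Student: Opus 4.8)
The plan is to run the non-commutative deformation machinery of Section~3 on $X$, spread the resulting sheaf out over $\mathcal X$ by the deformation Proposition of Section~2.6, and then analyse the family in an analytic neighbourhood of $P$ via the class~T description of the $\mathbf Q$-Gorenstein smoothing; the sheaf $\mathcal E$ is extracted from this local picture and globalised by a cohomology vanishing. Concretely, I would first construct on $X$ the versal one-pointed NC deformation $F$ of $A=\mathcal O_X(-D)$. By hypothesis (b), $A$ is locally $\mathcal O(-1)$ at $P$ in toroidal coordinates and invertible or dual invertible elsewhere, so Lemma~\ref{local vanishing} gives $\mathcal Ext^i(A,A)=0$ away from $P$ and Theorem~\ref{KKS} applies at $P$: $F$ is locally free or dual free of rank $r^2$, it is pretilting, and $R:=\text{End}(F)$ is the Kalck--Karmazyn algebra of $\frac 1{r^2}(1,ar-1)$. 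A preliminary lemma needed here is $\dim_k R=r^2$ for a Wahl singularity; I would prove it by induction on the length of the Hirzebruch--Jung fraction of $\frac 1{r^2}(1,ar-1)$, using that this fraction is a T-chain (obtained from $[4]$ by the two standard insertion operations) and that those operations correspond to operations on $R$ preserving $\dim_k R$. Then the Proposition of Section~2.6 applies ($F$ is locally free or dual free, pretilting, and $H^1(X,\mathcal O_X)=0$): after shrinking $\Delta$ there is a locally free or dual free sheaf $\mathcal F$ on $\mathcal X$ with $\mathcal F|_X\cong F$, with $\mathcal F|_Y$ pretilting of rank $r^2$ on $Y=f^{-1}(t)$, and $\dim_k\text{End}(\mathcal F|_Y)=\dim_k R=r^2$; note also $H^{>0}(Y,\mathcal O_Y)=0$ by flatness and semicontinuity.

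The heart of the argument is the analysis at $P$. Since $f$ is $\mathbf Q$-Gorenstein at $P$ and $\frac 1{r^2}(1,ar-1)$ is of class~T, after the finite base change $t\mapsto t'$ (of degree $a$, as in \cite{H}) the germ of $\mathcal X$ at $P$ is analytically $\{xy=z^r+t'\}\subset\frac 1r(1,-1,a)$, a quotient of a smooth threefold by $\mu_r$ acting freely over $t'\ne 0$ and fixing only $P$ over $t'=0$. I would match the construction of $F$ near $P$ from Section~3 (iterated universal extensions on the minimal resolution of $\frac 1{r^2}(1,ar-1)$, restricting trivially of rank $r^2$ to the fundamental cycle) with this class~T family so as to produce on the neighbourhood of $P$ a maximally Cohen--Macaulay sheaf $\mathcal E_{\mathrm{loc}}$ of rank $r$ with $\mathcal E_{\mathrm{loc}}|_X\cong A^{\oplus r}=\mathcal O(-1)^{\oplus r}$ and $\mathcal F\cong\mathcal E_{\mathrm{loc}}^{\oplus r}$ near $P$; hypothesis (b) is used to identify the divisorial class, and the degree-$a$ base change is exactly what makes the required $\mu_r$-equivariant structure exist as an honest (not twisted) sheaf. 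Globally, away from $P$ the sheaf $A^{\oplus r}$ is locally free or dual free, hence has obvious local extensions over $\mathcal X$; gluing these with $\mathcal E_{\mathrm{loc}}$ near $P$ is unobstructed because $\mathcal End(A^{\oplus r})$ is a direct sum of copies of $\mathcal O_X$ and $H^1(X,\mathcal O_X)=0$, and, as in the Proposition of Section~2.6, the resulting formal family extends to a coherent sheaf $\mathcal E$ on $\mathcal X$ after shrinking $\Delta'$. By construction $\mathcal E$ is maximally Cohen--Macaulay of rank $r$ with $\mathcal E|_X\cong A^{\oplus r}$, which is (1).

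For (2): since $\mathcal E$ agrees with $\mathcal E_{\mathrm{loc}}$ near $P$ we get $\mathcal F|_Y\cong(\mathcal E|_Y)^{\oplus r}$ in a neighbourhood of $P$; together with deformation invariance of Chern classes and the pretiltingness of $\mathcal F|_Y$ (which rigidifies the admissible direct-sum decompositions) this forces $\mathcal F|_Y\cong(\mathcal E|_Y)^{\oplus r}$ globally on $Y$. Writing $E:=\mathcal E|_Y$, Krull--Schmidt together with $\dim_k\text{End}(E^{\oplus r})=r^2\dim_k\text{End}(E)=r^2$ gives $\text{End}(E)=k$, while $\text{Ext}^{>0}(E,E)=0$ because $E^{\oplus r}=\mathcal F|_Y$ is pretilting; hence $E$ is an exceptional vector bundle of rank $r$.

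The step I expect to be the main obstacle is the local matching at $P$ in the second paragraph: reconciling the Kalck--Karmazyn/universal-extension description of $F$ with the class~T orbifold family precisely enough to obtain simultaneously (a) the \emph{exact} restriction $\mathcal E_{\mathrm{loc}}|_X\cong A^{\oplus r}$ rather than only its reflexive hull as in \cite{H}, and (b) the splitting $\mathcal F\cong\mathcal E_{\mathrm{loc}}^{\oplus r}$ near $P$; this amounts to tracking $\mu_r$-equivariant character data through the continued-fraction combinatorics and understanding the precise role of the degree-$a$ base change. A secondary obstacle is the global splitting $\mathcal F|_Y\cong(\mathcal E|_Y)^{\oplus r}$ in the third paragraph: should pretiltingness plus local agreement near $P$ not by itself exclude a multi-block decomposition of $\mathcal F|_Y$, one can instead invoke slope stability of $E$ as in \cite{H} to conclude $\text{End}(E)=k$.
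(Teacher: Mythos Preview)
Your approach is genuinely different from the paper's, and it has a concrete fatal gap.

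The paper does \emph{not} go through $F$ or $\mathcal F$ at all to prove this theorem. Instead it performs a weighted blow-up $\mu:\mathcal X'\to\mathcal X$ at $P$ with weights $\frac1r(1,ar-1,a,r)$, so that the central fibre becomes $X'\cup W$ with $W$ the exceptional divisor and $C=X'\cap W\cong\mathbf P^1$. On $W$ one invokes Hacking's exceptional bundle $G$ of rank $r$ with $G\vert_C\cong\mathcal O_C(-1)^{\oplus r}$; on $X'$ the strict transform $A'$ is invertible near $C$ with $A'\vert_C\cong\mathcal O_C(-1)$. Gluing $G$ with $(A')^{\oplus r}$ along $C$ yields a locally free or dual free sheaf $G'$ on $X'\cup W$ with $R\mu_*G'\cong A^{\oplus r}$ and $R\mathrm{Hom}(G',G')\cong k$. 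Rigidity then deforms $G'$ to $\mathcal E'$ on $\mathcal X'$, and $\mathcal E:=\mu_*\mathcal E'$ does the job. The relation to $F$ (namely $\mathcal F\vert_Y\cong E^{\oplus r}$) is proved \emph{afterwards}, in Theorem~\ref{direct sum}, using slope stability of $E$ and semistability of $\mathcal F\vert_Y$.

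Your route inverts this and tries to extract $\mathcal E$ from $\mathcal F$ via a local splitting $\mathcal F\cong\mathcal E_{\mathrm{loc}}^{\oplus r}$ near $P$, with $\mathcal E_{\mathrm{loc}}\vert_X\cong A^{\oplus r}$. But this splitting cannot hold. Restricting to $X$ it would force $F\cong A^{\oplus r^2}$ in a neighbourhood of $P$; however $F$ is \emph{locally free} of rank $r^2$ at $P$ (this is the content of Theorem~\ref{KKS}), whereas $A^{\oplus r^2}\cong\mathcal O_X(-1)^{\oplus r^2}$ is not locally free on $\frac1{r^2}(1,ar-1)$: as a $\mathbf Z/r^2$-module it is $r^2$ copies of a single nontrivial character, not the regular representation. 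So the proposed local decomposition is impossible on the central fibre, and hence on $\mathcal X$. Relatedly, your gluing of $\mathcal E_{\mathrm{loc}}$ with extensions of $A^{\oplus r}$ away from $P$ cannot be handled by the Proposition of \S2.6, since $A^{\oplus r}$ is not pretilting (indeed $\mathrm{Ext}^1_X(A,A)\ne 0$ is precisely why $F$ exists).

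Even setting the contradiction aside, the step you flag as the main obstacle---producing $\mathcal E_{\mathrm{loc}}$ by ``matching'' the Kalck--Karmazyn description with the class~T family---is the entire substance of the theorem and is not actually carried out in your outline. The paper's weighted blow-up replaces this vague matching by a concrete geometric construction in which the needed sheaf on the degenerate fibre is exhibited explicitly and shown to be rigid.
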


\begin{proof}
Let $X^! \to X$ be an {\em index $1$ cover} (or {\em canonical cover}) of an analytic neighborhood of $P$.
Then it is a hypersurface singularity defined by 
\[
X^! = \{xy = z^r\} \subset \mathbf C^3
\]
with an action of $G = \mathbf Z/(r)$ given by $(x,y,z) \mapsto (\zeta x, \zeta^{-1} y, \zeta^a z)$, where
$\zeta$ is a primitive $r$-th root of unity. 

In general, if $\mathcal X \to S$ is a $\mathbf Q$-Gorenstein deformation of a $\mathbf Q$-Gorenstein singularity $X$, then 
$K_{\mathcal X/S}$ is also a $\mathbf Q$-Cartier divisor.
An index $1$ cover
$\mathcal X^! \to \mathcal X$ gives a deformation $\mathcal X^! \to S$ of $X^!$ 
with the Galois group action of $G = \text{Gal}(X^!/X) = \text{Gal}(\mathcal X^!/\mathcal X)$.

A versal deformation of $X^!$ is given by
\[
xy = z^r + \sum_{i = 0}^{r-2} t_iz^i
\]
for the parameters $t_i$, and the action of $G$ extends only if $t_i = 0$ for $i > 0$.
Therefore a versal $\mathbf Q$-Gorenstein deformation of $X$ is given 
\[
\{xy = z^r + t\} \subset \frac 1r(1,-1,a,0).
\]

After replacing $\Delta$ by a finite base change, 
our deformation becomes a pull-back of a deformation
\[
\mathcal X = \{xy=z^r+t^a\} \subset \frac 1r(1,ar-1,a,0) = \frac 1r(1,ar-1,a,r)
\]
which we denote by the same letter and treat in the following.

Following \cite{H} \S 3, 
let $\mu: \mathcal X' \to \mathcal X$ be a weighted blow up at $P$ with weights $\frac 1r(1,ar-1,a,r)$, and 
let $W$ be the exceptional divisor.
Thus $W \cong \{xy = z^r + t^a\} \subset \mathbf P(1,ar-1,a,r)$.
Let $X'$ be the strict transform of $X = f^{-1}(0)$.
Then the induced morphism $\mu \vert_{X'}: X' \to X$ is a weighted blow-up with weights $\frac 1{r^2}(1,ar-1)$, where 
we have $(x,y,z) = (u^r, v^r, uv)$ for semi-invariant coordinates $(u,v)$ with weights $\frac 1{r^2}(1,ar-1)$.
We have $\mu^{-1}f^{-1}(0) = X' \cup W$.
We denote $C = X' \cap W$.
It is a smooth rational curve.

By \cite{H} Proposition 5.1, there exists an exceptional vector bundle $G$ of rank $r$ on $W$ such that 
$G \otimes \mathcal O_C \cong \mathcal{O}_C(-1)^{\oplus r}$.
Here we note that we take a dual bundle $F_2^*$ in \cite{H} as $G$.

Let $A'$ be the strict transform of $A$ on $X'$.
Then it is invertible or dual invertible, and invertible near $C$ 
such that $A' \otimes \mathcal O_C \cong \mathcal{O}_C(-1)$.
Then by gluing $G$ with $(A')^{\oplus r}$, we obtain a locally free or dual free sheaf 
$G'$ on $X' \cup W$.

By \cite{H} Proposition 5.1, we have $H^i(W,G) \cong 0$ for all $i$.
Here we note that such vanishings do not hold for $G^*$.
We have an exact sequence on $X' \cup W$:
\[
0 \to G' \to G \oplus (A')^{\oplus r} \to \mathcal O_C(-1)^{\oplus r} \to 0.
\]
Since $R\mu_*G \cong R\mu_*O_C(-1) \cong 0$, we have $R\mu_*G' \cong R\mu_*(A')^{\oplus r}$. 

We claim that $R(\mu \vert_{X'})_*A' \cong A$.
Since it is a local assertion near $C \cong \mathbf P^1$, 
we may assume that $A' = \mathcal{O}_{X'}(-l')$ for a smooth curve $l'$ in a small neighborhood of $C$ which 
intersects $C$ transversally.
Then $l = \mu \vert_{X'}(l')$ is again a smooth curve and $R(\mu \vert_{X'})_*\mathcal{O}_{l'} \cong \mathcal{O}_l$, hence
$R(\mu \vert_{X'})_*A' \cong \mathcal{O}_X(-l) \cong A$.
It follows that we have $R\mu_*G' \cong R\mu_*(A')^{\oplus r} \cong R(\mu \vert_{X'})_*(A')^{\oplus r} \cong A^{\oplus r}$.

We calculate the endomorphism sheaf $\mathcal End(G')$.
Since $A'$ is a divisorial sheaf on $X'$, we have $\mathcal End(A') \cong \mathcal{O}_{X'}$.
Since $G$ is a locally free sheaf on $W$, we obtain $\mathcal End(G')$ by gluing 
$\mathcal End(G)$ on $W$ and $\mathcal End((A')^{\oplus r}) \cong \mathcal{O}_{X'}^{\oplus r^2}$ on $X'$.
Thus we have an exact sequence
\[
0 \to \mathcal End(G') \to \mathcal End(G) \oplus \mathcal{O}_{X'}^{\oplus r^2} 
\to \mathcal{O}_C^{\oplus r^2} \to 0.
\]
Since $G$ is an exceptional vector bundle on $W$, we have 
$\mathbb R\Gamma(W, \mathcal End(G)) \cong \mathbb R\text{Hom}_W(G,G) \cong k$.
Since $A'$ is invertible or dual invertible, we have $\mathcal Ext^i(A',A') \cong 0$ for $i > 0$.
We deduce that 
\[
\mathbb R\text{Hom}_{X' \cup W}(G',G') \cong \mathbb R\Gamma(X' \cup W,\mathcal End(G')) \cong k.
\]
It follows that $G'$ deforms to yield a locally free or dual free sheaf $\mathcal E'$ on $\mathcal X'$
by Proposition \ref{extension}.

We set $\mathcal E = \mu_*\mathcal E'$.
It is a torsion free sheaf on $\mathcal X$ which is locally free or dual free except at the point $\mu(W)$.
Since $R\mu_*G' \cong A^{\oplus r}$, we have $R^i\mu_*\mathcal E' = 0$ for $i > 0$ by 
the upper semi-continuity theorem.
Indeed, if we take a sufficiently ample sheaf $H$ on $\mathcal X$, then we have
$H^i(X' \cup W, G' \otimes \mu^*H) \cong 0$ for $i > 0$.
It follows that $H^0(\mathcal X, R^i\mu_* \mathcal E' \otimes H) \cong H^i(\mathcal X', \mathcal E' \otimes \mu^*H) \cong 0$ for $i > 0$, 
and $R^i\mu_*\mathcal E' = 0$ for $i > 0$.

It also follows that a natural homomorphism $\mu_*\mathcal E' \to \mu_*G'$ is surjective. 
Hence we have $\mathcal E \otimes \mathcal{O}_X \cong A^{\oplus r}$, and 
$\mathcal E$ is a maximally Cohen-Macaulay sheaf, since so is $A$.

Since $\mathcal E \otimes \mathcal O_Y \cong \mathcal E' \otimes \mathcal O_Y$ on a general fiber $Y$, 
we have $\mathbb R\text{End}_Y(\mathcal E \otimes \mathcal O_Y) \cong k$ by the upper semi-continuity theorem.
We note here that $\mathcal End(\mathcal E)$ is flat over $\Delta$
because $\mathcal Hom^i(\mathcal E,\mathcal E) \cong 0$ for $i > 0$ and $\mathcal E$ is flat.
\end{proof}

\begin{Rem} 
In the above theorem, a global topological condition on $X$ in \cite{H} is replaced by the assumption on the 
existence of $A$.
A divisorial sheaf, say $A_H$, considered in \cite{H} is locally isomorphic to 
$\mathcal{O}(1)$ instead of $\mathcal{O}(-1)$, and the 
reflexive sheaf, say $\mathcal E_H$, on $\mathcal X$ satisfies 
that $(\mathcal E_H \otimes \mathcal O_X)^{**} \cong (A_H)^{\oplus r}$, i.e., we need to 
take a double dual. 
This is avoided by using negative degree sheaf $A$. 
Our $\mathcal E$ is equal to the dual $\mathcal E_H^*$.

We allowed that $X$ has singularities other than $P$ unlike in \cite{H}, but the same proof works for the construction of 
$\mathcal E$ and the proof of its stability.
We assumed that $X$ has a smoothing which is $\mathbf Q$-Gorenstein at $P$, 
and that $A$ is invertible or dual invertible except at $P$ so that there is no local deformation of $A$.
\end{Rem}

We prove that a pretilting bundle coming from a NC deformation on a special fiber 
deforms to a direct sum of Hacking's bundle on a generic fiber under a $\mathbf Q$-Gorenstein smoothing: 

\begin{Thm}\label{direct sum}
Assume the conditions of Theorem \ref{Hacking} and use the notation there.
Let $F$ be a versal NC deformation of $A$ on $X$.
Then the following hold.

(0) $F$ is a locally free or dual free sheaf of rank $r^2$ and is locally free at $P$.

(1) $\mathbb R\text{Hom}_X(F,A) \cong k$.
In particular, $F$ is pretilting.

(2) $\text{Ext}^i_X(F,F) \cong 0$ for $i > 0$, and $F$ extends to a locally free or dual free sheaf $\mathcal F$ on $\mathcal X$, 
if $\Delta$ is replaced by a smaller disk (it is not necessary to replace $\Delta$ by its covering).

(3) $\mathcal F \otimes \mathcal O_Y \cong (\mathcal E \otimes \mathcal O_Y)^{\oplus r}$ on $Y$.
In particular $\text{End}(F)$ deforms to $\text{Mat}(k,r)$.
\end{Thm}

\begin{proof}
(0) is already proved by Lemmas \ref{3.2} and \ref{3.3}.

(1) Since $F$ is the versal NC deformation of $A$, we have $\text{Hom}_X(F,A) \cong k$
and $\text{Ext}^1_X(F,A) = 0$ by the construction (\cite{NC multi}).

Since $F$ is locally free at $P$ and locally free or dual free elsewhere, the higher
extension sheaves vanish: $\mathcal Ext^i(F,A) = 0$ for $i > 0$.
Therefore we have 
\[
\text{Ext}^2_X(F,A) \cong H^2(X, \mathcal Hom(F,A)) \cong H^0(X, (A^* \otimes F \otimes K_X)^{**})^*
\]
by the Serre duality.
Since $F$ is a successive extension of $A$ and $H^0(X,K_X) = 0$, we deduce that $\text{Ext}^2(F,A) = 0$.
Thus $\mathbb R\text{Hom}_X(F,A) = k$.

(2) It follows from (1) that $\text{Ext}^i_X(F,F) = 0$ for $i > 0$.
By Proposition \ref{extension}, 
$F$ deforms flatly to a locally free or dual free sheaf $\mathcal F$ on $\mathcal X$, if we shrink $\Delta$
if necessary.

(3) By (1) and the upper semi-continuity theorem, 
we obtain 
\[
\mathbb R\text{Hom}_Y(\mathcal F \otimes \mathcal O_Y,\mathcal E \otimes \mathcal O_Y) \cong k^{\oplus r}.
\]
We prove that a natural homomorphism 
\[
\mathcal F \otimes \mathcal O_Y \to \text{Hom}(\mathcal F \otimes \mathcal O_Y,\mathcal E \otimes \mathcal O_Y)^* 
\otimes \mathcal E \otimes \mathcal O_Y 
\cong (\mathcal E \otimes \mathcal O_Y)^{\oplus r}
\]
is an isomorphism of sheaves.

By \cite{H} Theorem 1.1 and Proposition 4.4, $\mathcal E \otimes \mathcal O_Y$ is slope stable
with respect to any ample line bundle.
Since $F$ is a successive extension of a divisorial sheaf $A$, it is slope semistable.
Indeed, suppose that there is a subsheaf $B \subset F$ which attains the maximal slope $\mu(B) > \mu(F)$.
Since $F$ is a successive extension of $A$, there is a non-zero homomorphism $h: B \to A$.
Since $\mu(B) > \mu(F) = \mu(A)$, it follows that 
$\mu(\text{Ker}(h)) > \mu(B)$, a contradiction to the maximality of $B$. 
Therefore $\mathcal F \otimes \mathcal O_Y$ is also semistable because semistability is an open condition.

Let $f_1,\dots,f_r \in \text{Hom}(\mathcal F \otimes \mathcal O_Y,\mathcal E \otimes \mathcal O_Y)$ be a basis.
We introduce a decreasing filtration $K^p = \bigcap_{i=1}^p \text{Ker}(f_i)$ of $\mathcal F \otimes \mathcal O_Y$ for $0 \le p \le r$, 
where we set $K^0 = \mathcal F \otimes \mathcal O_Y$.
We will prove that $f_{p+1} \vert_{K^p}: K^p \to \mathcal E \otimes \mathcal O_Y$ is surjective in codimension $1$, 
$\text{rank}(K^{p+1}) = r(r-p-1)$, 
and that $\mu(K^{p+1}) = \mu(A)$ by induction on $p$.

Assume that our assertion is already proved for $p < p_0$ for some integer $p_0 \ge 0$.
First assume that $f_{p_0+1} \vert_{K^{p_0}} \ne 0$.
Since $\mathcal F \otimes \mathcal O_Y$ is semistable 
and $\mathcal E \otimes \mathcal O_Y$ is stable with the same slope, it follows that $K^{p_0}$ is also semistable with 
$\mu(K^{p_0}) = \mu(\mathcal E \otimes \mathcal O_Y) = \mu(A)$.
It follows that $f_{p_0+1} \vert_{K^{p_0}}$ is surjective in codimension $1$ by the stability of $\mathcal E \otimes \mathcal O_Y$.
Then $\text{rank}(K^{p_0+1}) = r(r-p_0-1)$ and $\mu(K^{p_0+1}) = \mu(A)$.

Now we will prove that $f_{p_0+1} \vert_{K^{p_0}} \ne 0$.
Assuming that $f_{p_0+1} \vert_{K^{p_0}} = 0$, we claim that there exist $c_i \in k$ for $1 \le i \le p_0$ such that 
$f_{p_0+1} = \sum_{i = 1}^{p_0} c_if_i$, a contradiction with the linear independence. 
We will prove this claim by descending induction on $i$.
Assume that the $c_i$ for $p_1 < i \le p_0$ are already determined for an integer $p_1$ with $p_1 \le p_0$ so that 
$(f_{p_0+1} - \sum_{i=p_1+1}^{p_0} c_if_i) \vert_{K^{p_1}} = 0$.
We consider homomorphisms $(f_{p_0+1} - \sum_{i=p_1+1}^{p_0} c_if_i) \vert_{K^{p_1-1}}$ and 
$f_{p_1} \vert_{K^{p_1-1}}$ from $K^{p_1-1}$ to $\mathcal E \otimes \mathcal O_Y$.
Since $\text{End}(\mathcal E \otimes \mathcal O_Y) \cong k$, there exists $c_{p_1} \in k$ such that 
$(f_{p_0+1} - \sum_{i=p_1}^{p_0} c_if_i) \vert_{K^{p_1-1}} = 0$.
Thus the existence of the $c_i$ is proved and hence the property of the filtration is proved.

It follows that $\text{Gr}_K(\mathcal F \otimes \mathcal O_Y) = \bigoplus_{i=1}^r K^{i-1}/K^i$ 
is a subsheaf of $(\mathcal E \otimes \mathcal O_Y)^{\oplus r}$.
Since both sheaves have the same rank and same slope, the support of their quotients has dimension $0$.
We have $\chi(\mathcal F \otimes \mathcal O_Y) = \chi(F) = r^2 \chi(A) = r \chi(\mathcal E \otimes \mathcal O_Y)$, hence 
$\text{Gr}_K(\mathcal F \otimes \mathcal O_Y) \cong (\mathcal E \otimes \mathcal O_Y)^{\oplus r}$.
Since $\text{Ext}^1(\mathcal E \otimes \mathcal O_Y,\mathcal E \otimes \mathcal O_Y) = 0$, we obtain our assertion.
\end{proof}

%%%%%%%%%%%%%%%%%%%%%%%%%%%%%%
%%%%%%%%%%%%%%%%%%%%%%%%%%%%%%
%%%%%%%%%%%%%%%%%%%%%%%%%%%%%%
\section{Main theorem: higher Milnor number case}

In order to generalize our main results to higher Milnor number case (i.e., $s > 1$), 
we use crepant simultaneous partial resolutions and flops between them
which are explained below.
We prove that semi-orthogonality plus flops implies full orthogonality.

We construct a {\em crepant simultaneous partial resolution} of a $\mathbf Q$-Gorenstein smoothing  
$f: \mathcal X \to \Delta$ of a quotient singularity of type $\frac 1{r^2s}(1,ars - 1)$ (cf. \cite{crepant}):

\begin{Lem}\label{crepant}
Let $X$ be a quotient singularity of type $\frac 1{r^2s}(1,ars - 1)$.
Let $f: \mathcal X \to \Delta$ be a $\mathbf Q$-Gorenstein smoothing of $X$.
Then, after a suitable shrinking and finite base change of $\Delta$, 
there exists a birational morphism $\mu: \mathcal X' \to \mathcal X$ which satisfies
the following conditions:

(1) $\mu^{-1}f^{-1}(t) \to f^{-1}(t)$ is an isomorphism for $t \ne 0$, where $t$ is a coordinate on $\Delta$.

(2) $X' = \mu^{-1}f^{-1}(0)$ is a normal surface having $s$ quotient singular points 
$P_1,\dots,P_s$ of type $\frac 1{r^2}(1,ar-1)$, and 
$f':= f \circ \mu: \mathcal X' \to \Delta$ is a $\mathbf Q$-Gorenstein smoothing of $X'$.

(3) $\mu: X' \to X$ is {\em crepant}, i.e., $K_{X'} = \mu^*K_X$ as $\mathbf Q$-Cartier divisors.

(4) The exceptional curves $C_1,\dots,C_{s-1} \subset X'$ of $\mu$ form a chain of $\mathbf P^1$'s 
connecting $s$ singular points such that $P_i,P_{i+1} \in C_i$ for all $i$.
\end{Lem}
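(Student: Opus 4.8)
The goal is to produce, after shrinking and base change, a partial resolution $\mu:\mathcal X'\to\mathcal X$ that is an isomorphism over $t\neq 0$, crepant over the central fiber, and whose central fiber $X'$ has exactly $s$ quotient singularities of type $\tfrac1{r^2}(1,ar-1)$ joined in a chain by $(-?)$-curves $C_1,\dots,C_{s-1}$. I would work with the canonical cover picture exactly as in the proof of Theorem~\ref{Hacking}. Recall $X=\{xy=z^{rs}\}\subset\frac1r(1,-1,a)$ with $x=u^{rs},y=v^{rs},z=uv$, and by \S2.2 the semi-universal $\mathbf Q$-Gorenstein deformation of $X$ (for $r>1$) is $\{xy=z^{rs}+\sum_{i=0}^{s-1}t_iz^{ir}\}\subset\frac1r(1,-1,a)$; our $f$ is, after finite base change, a pull-back of this family, and it is harmless to take the one-parameter subfamily $\{xy=z^{rs}+t^{?}z^{(s-1)r}+\cdots\}$ whose fiber over generic $t$ is smooth. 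So I would reduce to studying $\mathcal X=\{xy=\prod_{i=1}^{s}(z^{r}-\lambda_i(t))\}\subset\frac1r(1,-1,a)\times\Delta$, where the $\lambda_i(t)$ are distinct for $t\neq 0$ and all equal to $0$ at $t=0$ (this is the generic way to split the polynomial $z^{rs}$; after a further base change one may assume the $\lambda_i$ are honest functions of $t$).

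\textbf{Construction of $\mu$.} On the total space $\mathcal X$, the equation $xy=\prod_i(z^r-\lambda_i(t))$ is, away from $t=0$, a disjoint union (in $z^r$) of $s$ factors; the natural thing is to blow up the ideals $(x,\;z^r-\lambda_i(t))$ successively, or equivalently take the closure in $\mathcal X\times\mathbf P^1\times\cdots\times\mathbf P^1$ of the graph of the rational maps $[\,x:\prod_{j\le i}(z^r-\lambda_j)\,]$. This is $G=\mathbf Z/r$-equivariant (the factors $z^r-\lambda_i$ are $G$-invariant, and $x$ transforms by $\zeta$), so it descends to a birational morphism $\mu:\mathcal X'\to\mathcal X$ on the quotient. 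Over $t\neq 0$ the $\lambda_i$ are distinct, the factors are coprime, and the blow-up is an isomorphism, giving (1). Over $t=0$ all $\lambda_i$ coincide: the proper transform $X'$ of $X$ acquires a chain of $s$ germs, each analytically of the form $\{xy=z^r\}$ near a point where $x,y$ vanish, i.e.\ (after descending by $G$, and using $x=u^{rs}$, $y=v^{rs}$, $z=uv$ as before) a singularity of type $\frac1{r^2}(1,ar-1)$, and the $s-1$ curves separating consecutive germs are smooth rational curves $C_i$ with $P_i,P_{i+1}\in C_i$; this gives (3). I would verify (2) --- that $X'\to X$ is crepant and $X'$ normal --- by the standard discrepancy computation for blow-ups of $cA_n$-type threefold singularities (here $\mathcal X$ at $P$ is the quotient of $\{xy=\prod(z^r-\lambda_i)\}\subset\mathbf C^4$, a $cA_{rs-1}$ point, by $G$): each such blow-up of a length-one curve of $A$-type singularities is crepant, and hence so is $\mu|_{X'}$ by adjunction since $K_{\mathcal X'}=\mu^*K_{\mathcal X}$ and $X'=\mu^*X$ (note $X$ is Cartier in $\mathcal X$). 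Shrinking $\Delta$ ensures no stray fibers are disturbed.

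\textbf{Expected obstacle.} The delicate point is not the generic-fiber statement (1), which is formal, but controlling the central fiber precisely: one must check that the partial resolution can be arranged so that $X'$ has \emph{exactly} $s$ singular points, each of the stated type, rather than something milder or with extra components in $\mu^{-1}f^{-1}(0)$. This is where the choice of one-parameter family inside the semi-universal family matters --- a non-generic smoothing direction could cause the $\lambda_i$ to collide in groups, producing fewer singular points of worse type. I would handle this by invoking the classification of class~T singularities and their $\mathbf Q$-Gorenstein deformations from \cite{KSB} together with the toric/$cA_n$ description above: for the $\mathbf Z/r$-quotient of $\{xy=\prod_{i=1}^s(z^r-\lambda_i(t))\}$ with $\lambda_i$ chosen to be, say, $\lambda_i(t)=c_it$ with distinct nonzero constants $c_i$, the small resolution steps are forced and the central fiber is visibly the chain described. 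A secondary subtlety is the base change: making the $\lambda_i(t)$ genuine single-valued functions (rather than branches of a multivalued function) may require adjoining an $s!$-th root of $t$, which is the stated ``finite base change.'' Once the model is in place, properties (1)--(3) follow by inspection of this explicit model, with crepancy the one computation I would carry out in full.
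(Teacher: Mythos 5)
Your proposal is correct and follows essentially the same route as the paper: pass to the canonical cover, factor the equation as $xy=\prod_{i=1}^{s}(z^{r}-h_i(t))$ after a finite base change, and successively blow up the ideals $(x,\,z^{r}-h_i(t))$, which are small (hence crepant) modifications producing the chain of $\frac1{r^2}(1,ar-1)$ points; the paper merely packages this as an induction on $s$ via the two affine charts. The one worry you raise --- that the roots might collide for a non-generic smoothing direction --- is resolved not by choosing the family (which is given) but by the hypothesis that $f^{-1}(t)$ is smooth for $t\neq 0$, which forces the $h_i(t)$ to be pairwise distinct away from $t=0$.
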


\begin{proof}
We embed $X$ as 
$X = \{xy = z^{rs}\} \subset \frac 1r(1,-1,a)$ by $x=u^{rs}$, $y=v^{rs}$, $z = uv$.
An index $1$ cover $X^!$, or a canonical cover, of $X$ is given by an equation
\[
\{xy = z^{rs}\} \subset \mathbf C^3
\]
with an action of $G = \mathbf Z/(r)$ given by 
$(x,y,z) \mapsto (\zeta x, \zeta^{-1}y, \zeta^a z)$.
The versal deformation of $X^!$ is given by 
$xy = z^{rs} + \sum_{i=0}^{rs-2} t_iz^i$, where the $t_i$ are parameters.
Hence the equation of a versal $\mathbf Q$-Gorenstein deformation of $X$ is given by 
\[
xy = z^{rs} + \sum_{i=0}^{s-1} t_iz^{ir}
\]
since it should be invariant under the Galois group action. 

Any one parameter deformation of $X$ is given by an equation
$xy = z^{rs} + \sum_{i=0}^{s-1} g_i(t)z^{ir}$, where $t$ is the parameter on $\Delta$ and the $g_i$ are holomorphic functions
such that $g_i(0) = 0$. 
After a suitable base change $t \mapsto t^m$ and shrinking of $\Delta$, we obtain factorization
of the equation of $\mathcal X$:
\[
f: \mathcal X = \{xy = \prod_{i=1}^s (z^r-h_i(t))\} \subset V = \frac 1r(1,-1,a,0) \to \Delta
\]
where the $h_i$ are holomorphic functions such that $h_i(0) = 0$.

We construct $\mu$ by induction on $s$.
If $s = 1$, then $\mu$ is the identity.
Assume that $s > 1$ in the following.

Let $\mu_1: \mathcal X_1 \to \mathcal X$ be a blow up along the ideal $(x,z^r-h_1(t))$, 
and let $C_1$ be the exceptional curve.
Then $\mu_1$ is crepant because it is small.
$\mathcal X_1$ is covered by two open subsets $U_1$ and $U_2$.
$U_1$ has an equation 
\[
U_1 = \{x'y = \prod_{i=2}^s (z^r - h_i(t))\} \subset \frac 1r(1,-1,a,0)
\]
with coordinates $(x'=x/(z^r-h_1(t)),y,z,t)$, thus we obtain the same situation with $s$ decreasing by $1$.
$U_2$ has an equation 
\[
U_2 = \{y = \prod_{i=1}^s (z^r-h_i(t))/x, xt' = z^r-h_1(t)\} \subset \frac 1r(1,-1,a,0,-1)
\]
with coordinates $(x,y,z,t,t')$.
In other words, 
\[
U_2 = \{xt' = z^r-h_1(t)\} \subset \frac 1r(1,-1,a,0)
\]
with coordinates $(x,t',z,t)$, and we obtain a situation with $s = 1$. 

Therefore $\mu_1^{-1}f^{-1}(0)$ has two singular points of types $\frac 1{r^2}(1,ar-1)$ and 
$\frac 1{r^2(s-1)}(1,ar(s-1)-1)$.
By repeating the above small blow ups, we obtain our $\mu$.
\end{proof}

We note that $\mu: X' \to X$ does not factor the minimal resolution of $X$.
For example, if $r = s = 2$, then the exceptional locus of the minimal resolution $\nu: X'' \to X$ of the quotient singularity of type 
$\frac 18(1,3)$ consists of two $(-3)$-curves $E_1,E_2$.
$X'$ is obtained from $X''$ by blowing up $E_1 \cap E_2$ then contracting the strict transforms $E'_1,E'_2$ which are 
$(-4)$-curves.

Since $\mu: \mathcal X' \to \mathcal X$ is crepant, we can flop curves $C_i$ on $\mathcal X'$ for $1 \le i \le s-1$ (\cite{crepant}).
Let $\mu_i: \mathcal X'_i \to \mathcal X$ be the flopped morphism.
Though $\mathcal X'$ and $\mathcal X'_i$ are isomorphic, 
the natural birational map $\mathcal X' \dashrightarrow \mathcal X'_i$ is not a morphism.
It induces a birational map $X' \dashrightarrow X'_i$ which is extendable to an isomorphism by the dimension reason.
 
A flop acts on the set of divisors on $\mathcal X'$ because the natural birational map $\mathcal X' \dashrightarrow \mathcal X'_i$ 
is an isomorphism in codimension $1$.
Because the Milnor fiber of the Wahl singularity is a $\mathbf Q$-homology ball (\cite{W}, \cite{moderate}), 
the cohomology groups of the fibers with coefficients in $\mathbf Q$ are constant.
Hence this action induces an action on the numerical classes of divisors on $X'$ by the restriction of 
$\mathbf Q$-Cartier divisors.

We calculate this action using the following lemma:

\begin{Lem}\label{mu flop}
Let $\mu: V \to W$ be a projective birational morphism of $3$-dimensional varieties with only terminal singularities
whose exceptional locus is an irreducible curve $C$ such that $(K_V, C) = 0$, and let 
$\mu': V' \to W$ be its flop with the exceptional curve $C'$.
Let $D$ be a $\mathbf Q$-Cartier divisor on $V$ and let $D'$ be its strict transform on $V'$.
Then $(D',C') = - (D,C)$.
\end{Lem}

\begin{proof} 
We use the construction of flops in \cite{flop}.
Since $\mu$ is crepant, $W$ has only terminal singularities too.
We can replace $W$ by its analytic germ around $\mu(C)$, because the intersections occur above this germ.
We can also replace $W$ by its index $1$ cover and replace $V$ and $V'$ by their pull-backs, 
because the equality of intersection numbers is preserved by a finite covering.
Then $W$ becomes a hypersurface singularity of multiplicity $2$, a double cover of a smooth germ.
There is a Galois involution $\sigma: W \to W$ which underlies  
the flop $(\mu')^{-1}\mu: V \dashrightarrow V'$.
More precisely, we have $V' = V \times_W W^{\sigma}$, where the symbol $W^{\sigma}$ means that 
the map $W^{\sigma} \to W$ is given by $\sigma$.  
$E := \mu_*D + \sigma_*\mu_*D$ is a pull-back of a $\mathbf Q$-divisor on the smooth germ, hence is a 
$\mathbf Q$-Cartier divisor.

The isomorphism $\sigma$ induces an isomorphism $\sigma': V \to V'$.
We have $\mu_*(\sigma')^*D' = \sigma_*\mu_*D$, hence $D + (\sigma')^*D' = \mu^*E$.
Since $C$ is contracted by $\mu$ and $\sigma'(C) = C'$, we have $0 = (D, C) + ((\sigma')^*D', C) = (D, C) + (D',C')$,   
hence the result.
\end{proof} 

We continue to use the notation of Lemma \ref{crepant}.

\begin{Lem}
Let $C_0,C_s$ be strict transforms on $X'$ of the curves corresponding to the two coordinate axes on $X$
such that $C_0,C_1,\dots,C_{s-1},C_s \subset X'$ form a chain of curves in this order, so that 
$C_i \cap C_j = \emptyset$ if $\vert i - j \vert \ge 2$.
Then the following hold.

(1) $(C_{i-1}, C_i) = 1/r^2$ for $1 \le i \le s$, and $(C_i^2) = - 2/r^2$ for $1 \le i \le s-1$.

(2) The flop of the curve $C_i$ interchanges $C_0 + \dots + C_{i-1}$ and $C_0 + \dots + C_i$ for $1 \le i \le s-1$.
\end{Lem}

\begin{proof}
(1) The first equality follows since the order of the quotient singularity is $1/r^2$. 
We have $(K_{X'}, C_i) = 0$ for $1 \le i \le s-1$, and 
$K_{X'} + C_0 + C_1 + \dots C_{s-1} + C_s \sim 0$.
Then $\sum_{k=0}^s (C_k, C_i) = 0$, hence the second equality.

(2) For $1 \le i,j \le s-1$, we have
\[
((C_0 + \dots + C_j),C_i) = \begin{cases} 1/r^2, &\quad j = i-1, \\
-1/r^2, &\quad j = i, \\
0, &\quad j \ne i-1,i.  \end{cases} 
\]
Hence the assertion follows from Lemma \ref{mu flop}.
\end{proof}

\begin{Thm}\label{Hacking'}
Let $X$ be a normal projective variety of dimension $2$ such that $H^p(X,\mathcal{O}_X)= 0$ for $p > 0$.
Assume the following conditions:

(a) There is a quotient singularity $P \in X$ of type $\frac 1{r^2s}(1, ars-1)$ for positive integers $a,r,s$ such that 
$0 < a < r$ and $(r,a) = 1$.

(b) There exists a divisorial sheaf $A = \mathcal{O}_X(-D)$ on $X$ for a Weil divisor $D$ such that 
$D$ is equivalent to the first coordinate axis with respect to the toroidal coordinate ($\mu(C_0)$ in the above discussion)
in an analytic neighborhood of $P$, and that $A$ is 
locally invertible or dual invertible at other singularities of $X$.

(c) There is a projective flat deformation $f: \mathcal X \to \Delta$ over a disk $\Delta$ with a coordinate $t$
such that $X = f^{-1}(0)$,  
$f^{-1}(t)$ is smooth for $t \ne 0$, and that $\mathcal X$ is $\mathbf Q$-Gorenstein at $P$.

Then, after replacing $\Delta$ by a smaller disk around $0$ and after a finite base change 
by taking roots of the coordinate $t$, 
there exist maximally Cohen-Macaulay sheaves $\mathcal E_1,\dots \mathcal E_s$ of rank $r$ on $\mathcal X$
which satisfy the following conditions:

\begin{enumerate}
\item $\mathcal E_i \otimes \mathcal O_X \cong A^{\oplus r}$ for all $i$.

\item $E_i := \mathcal E_i \otimes \mathcal O_Y$ are exceptional vector bundles on $Y = f^{-1}(t)$ for $t \ne 0$, 
which are mutually orthogonal, i.e., 
\[
\mathbb R\text{Hom}_Y(E_i,E_j) := \bigoplus_{p=0}^2 \text{Ext}^p(E_i,E_j)[-p] \cong 0
\]
for $i \ne j$.
\end{enumerate}
\end{Thm}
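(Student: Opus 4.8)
The plan is to reduce the global statement to the local model over the quotient singularity, construct the sheaves $\mathcal E_i$ one at a time using a crepant simultaneous partial resolution, and then establish mutual orthogonality via the flops described before the theorem. First I would build, as in Lemma \ref{crepant}, the morphism $\mu : \mathcal X' \to \mathcal X$ whose central fiber $X'$ has $s$ quotient singular points $P_1,\dots,P_s$ of Wahl type $\frac 1{r^2}(1,ar-1)$ joined by a chain $C_1,\dots,C_{s-1}$ of rational curves, flanked by the strict transforms $C_0,C_s$ of the coordinate axes at $P$. The divisorial sheaf $A$ pulls back to a divisorial sheaf $A'$ on $X'$ which is invertible along the chain of curves and, near each $P_i$, is isomorphic to $\mathcal O(-1)$ in suitable toroidal coordinates (this is where assumption (b) is used, together with the observation that $\mu$ factors through the minimal resolution). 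For each $i$, I would then run the local construction of Theorem \ref{Hacking}: weighted-blow up at $P_i$, produce Hacking's exceptional bundle $G_i$ of rank $r$ on the exceptional divisor $W_i \cong \{xy=z^r+t^a\}\subset \mathbf P(1,ar-1,a,r)$ with $G_i\vert_{C}\cong\mathcal O_C(-1)^{\oplus r}$ and $H^\bullet(W_i,G_i)=0$, glue it with $(A')^{\oplus r}$ along the rest of $X'$ (twisting along the chain of curves so the gluing data matches), check $R\mathrm{Hom}=k$, deform over $\Delta$, and push forward to get a maximally Cohen–Macaulay sheaf $\mathcal E_i$ on $\mathcal X$ with $\mathcal E_i\vert_X\cong A^{\oplus r}$ and $E_i:=\mathcal E_i\vert_Y$ exceptional. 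Assertion (1) and the exceptionality half of (2) follow exactly as in Theorem \ref{Hacking}; the point is that the different $\mathcal E_i$ arise from \emph{different} choices of which singular point to resolve, equivalently from flopping the chain.

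The heart of the argument is the mutual orthogonality in (2), and this is where I expect the main obstacle to lie. The strategy is: (i) for a fixed choice of crepant partial resolution $\mu$, show the two bundles coming from resolving $P_i$ and $P_j$ are \emph{semi}-orthogonal in one direction, i.e. $R\mathrm{Hom}_Y(E_i,E_j)\cong 0$ for, say, $i<j$, by exhibiting them as (push-forwards of) members of a semi-orthogonal sequence on $X'$ built from the $L$-sheaves of Section 3 (the chain $C_0,E_{\bullet,1},\dots$ carries a semi-orthogonal exceptional collection, and $E_i,E_j$ sit inside it in the correct order); (ii) observe that flopping the curves $C_i,\dots,C_{j-1}$ interchanges the roles of $P_i$ and $P_j$ — concretely, by the intersection-number computation before the theorem the flop interchanges $C_0+\dots+C_{i-1}$ with $C_0+\dots+C_i$, hence permutes the $A'$-twists and swaps the two constructions — and since flops induce equivalences of derived categories that are compatible with restriction to the (unchanged) smooth fiber $Y$, the \emph{other} semi-orthogonality $R\mathrm{Hom}_Y(E_j,E_i)\cong 0$ is obtained by transporting the first along the flop. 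Combining (i) and (ii) gives $R\mathrm{Hom}_Y(E_i,E_j)\cong 0$ in both directions, i.e. full orthogonality. The delicate points here are making the identification of $E_i$ with a specific object of the semi-orthogonal collection on the crepant resolution precise (keeping track of the twists by $C_0+\dots+C_{i-1}$), and checking that the flop equivalence really does intertwine the restriction functors $(-)\vert_Y$ on the two sides — both flops are isomorphisms over $\Delta^\ast$, so this should follow from the fact that $\mathcal X'$ and its flop agree away from the central fiber, but the compatibility of the derived-category equivalence with specialization needs to be spelled out.

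Finally I would assemble the pieces: $\mathcal E_i\vert_X\cong A^{\oplus r}$ is (1); $E_i$ exceptional plus the orthogonality just proved is (2). I would also remark that, because $\mu:X'\to X$ factors through the minimal resolution at $P$ and the $C_i$ for $1\le i\le s-1$ are $K_{X'}$-trivial, the whole configuration is genuinely flopping (not flipping), which is what licenses the use of crepant flops and the resulting derived equivalences; the coprimality $(r,a)=1$ and $r>1$ enter through the index-$1$ cover being a hypersurface, ensuring the versal $\mathbf Q$-Gorenstein deformation has the product form $xy=\prod_i(z^r-h_i(t))$ used to build $\mu$. The remaining assertions (3) and (4) of the main theorem — that the versal NC deformation $F$ of $A$ on $X$ deforms to $\bigoplus_i E_i^{\oplus r}$ — are handled separately in Theorem \ref{direct sum'} by the slope-stability filtration argument of Theorem \ref{direct sum}, now applied blockwise to each $E_i$.
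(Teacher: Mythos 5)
Your proposal follows essentially the same route as the paper: construct the crepant simultaneous partial resolution of Lemma \ref{crepant}, apply Theorem \ref{Hacking} at each $P_i$ to the twisted strict transforms $A'_i = A'(-C_1-\dots-C_{i-1})$, obtain semi-orthogonality in one direction on the fixed model, get the reverse direction by flopping the chain (which interchanges the $A'_i$ while leaving the generic fiber unchanged), and push forward to $\mathcal X$. The only cosmetic difference is in step (i): the paper does not embed the $E_i$ in an exceptional collection but directly computes $R\mathrm{Hom}_{X'}(A'_i,A'_j)\cong RH(X',\mathcal O_{X'}(-(C_i+\dots+C_{j-1})))\cong 0$ for $i<j$ and invokes upper semi-continuity, which is simpler and avoids the identification you flag as delicate.
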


\begin{proof}
Let $\mu: \mathcal X' \to \mathcal X$ be the crepant simultaneous partial resolution 
constructed in Lemma \ref{crepant}
The central fiber $X'$ has $s$ singularities $P_i$ of type $\frac 1{r^2}(1,ar-1)$, and 
$\mathcal X'$ is a $\mathbf Q$-Gorenstein smoothing of $X'$ whose general fiber is the same 
as that of the smoothing $\mathcal X$.
The exceptional curves $C_i$ connect the singular points of $X'$ as described in the lemma.

Let $A'$ be a divisorial sheaf on $X'$ corresponding to the strict transform of the divisor $D$ in the following way:
$\mu_*A' \cong A$ and that $A'$ is locally isomorphic to $\mathcal{O}_{X'}(-C_0)$ near $\mu^{-1}(P)$.
Let $A'_i = A'(- C_1 - \dots - C_{i-1})$ for $1 \le i \le s$.
Then $A'_i$ is a divisorial sheaf on $X'$ which is locally isomorphic to $\mathcal{O}_{X'}(-C_{i-1})$ near $P_i$, 
locally dual invertible at $P_j$ for $j < i$, and locally invertible elsewhere.
 
By Theorem \ref{Hacking}, there is a maximally Cohen-Macaulay sheaf $\mathcal E'_i$ on $\mathcal X'$ 
for every $1 \le i \le s$ such that 
$\mathcal E'_i \otimes \mathcal O_{X'} \cong (A'_i)^{\oplus r}$ and $\mathcal E'_i \otimes \mathcal O_Y$ is an 
exceptional vector bundle.

We consider $A'_i$ and $A'_j$ such that $i < j$.
Then $A'_i$ (resp. $A'_j$) is locally dual invertible at the points $P_1,\dots,P_{i-1}$ 
(resp. $P_1, \dots, P_{j-1}$) and locally invertible elsewhere except at $P_i$ (resp $P_j$).
It follows that $\mathcal Ext^k(A'_i,A'_j) \cong 0$ for $k > 0$ by Lemma \ref{local vanishing}. 
We calculate 
\[
\begin{split}
&\mathbb R\text{Hom}_{X'}(A'_i, A'_j) \cong \mathbb R\Gamma(X', \mathcal Hom(A'_i,A'_j)) \\
&\cong \mathbb R\Gamma(X',\mathcal{O}_{X'}(-(C_i + \dots + C_{j-1})) \cong 0
\end{split}
\]
because $\mathbb R\Gamma(X',\mathcal{O}_{X'}) \cong \mathbb R\Gamma(X', \mathcal{O}_{C_i + \dots + C_{j-1}}) \cong k$.
By the upper semi-continuity, we obtain
\[
\mathbb R\text{Hom}_Y(\mathcal E'_i \otimes \mathcal O_Y,\mathcal E'_j \otimes \mathcal O_Y) \cong 0
\]
for $i < j$.

If we flop a curve $C_i$ on $\mathcal X'$, the divisorial sheaves $A'_i$ and $A'_{i+1}$ on $X'$ are
interchanged.
More precisely, let $\alpha: \mathcal X' \dashrightarrow \mathcal X'_1$ be the flop of a curve $C_i$.
Then there is an isomorphism of special fibers $\beta: X' \to X'_1$ which is induced by the flop as a birational map, but not the restriction of the flop.
$\alpha$ induces an isomorphism $\alpha_*$ of the spaces of divisors from $\mathcal X'$ to $\mathcal X'_1$, and the latter induces by restriction 
an isomorphism of the space of divisors on the special fibers.
By pulling back by $\beta$, we obtain the above correspondence of divisors on $X'$.

But since the generic fiber is unchanged under the flop, we obtain 
\[
\mathbb R\text{Hom}_Y(\mathcal E'_i \otimes \mathcal O_Y,\mathcal E'_j \otimes \mathcal O_Y) \cong 0
\]
for $i > j$.
Therefore we have 
\[
\mathbb R\text{Hom}_Y(\mathcal E'_i \otimes \mathcal O_Y,\mathcal E'_j \otimes \mathcal O_Y) \cong 0
\]
whenever $i \ne j$.

We claim that $R\mu_*A'_i \cong A$, i.e., $R^j\mu_*A'_i \cong 0$ for $j > 0$ and all $i$.
Indeed, since it is a local assertion, we may assume that 
$X = \frac 1{r^2s}(1,ars-1)$, $A' = \mathcal{O}_{X'}(-C_0)$ and $A = \mathcal{O}_X(-\mu(C_0))$.
Then we have $R\mu_*\mathcal{O}_{C_0+C_1 + \dots + C_{i-1}} \cong \mathcal{O}_{\mu(C_0)}$.
Since $R\mu_*\mathcal{O}_{X'} \cong \mathcal{O}_X$, we obtain our claim.

By the upper semi-continuity again as in the proof of Theorem \ref{Hacking}, 
we have $R^j\mu_*\mathcal E'_i \cong 0$ for $j > 0$ and all $i$.
We define $\mathcal E_i$ by $R\mu_*\mathcal E'_i = \mathcal E_i$.
We have an exact sequence
\[
0 \to \mathcal E'_i \to \mathcal E'_i \to (A'_i)^{\oplus r} \to 0
\]
where the first arrow is the multiplication of $t$.
Then we have 
\[
0 \to \mathcal E_i \to \mathcal E_i \to A^{\oplus r} \to 0
\]
i.e., $\mathcal E_i \otimes \mathcal O_X \cong A^{\oplus r}$.
This completes the proof.
\end{proof}

\begin{Thm}\label{direct sum'}
Assume the conditions of Theorem \ref{Hacking'}.
Let $F$ be a versal NC deformation of $A$ on $X$.
Then the following hold.

(0) $F$ is a locally free or dual free sheaf of rank $r^2s$ and is locally free at $P$.

(1) $\mathbb R\text{Hom}_X(F,A) = k$.

(2) $\text{Ext}^i_X(F,F) = 0$ for $i > 0$, and $F$ deforms to a locally free or dual free sheaf $\mathcal F$, 
if $\Delta$ is replaced by a smaller disk.

(3) $\mathcal F \otimes \mathcal O_Y \cong \bigoplus_{i=1}^s (\mathcal E_i \otimes \mathcal O_Y)^{\oplus r}$ on $Y$.
In particular $\text{End}(F)$ deforms to $\text{Mat}(k,r)^{\times s}$.
\end{Thm}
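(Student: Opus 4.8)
The plan is to run the $s=1$ argument of Theorem \ref{direct sum} almost verbatim, the only change being that the single exceptional bundle there is replaced by the family $E_1,\dots,E_s$ of mutually orthogonal exceptional bundles produced in Theorem \ref{Hacking'}. For (0), the statement that $F$ is locally free of rank $r^2s$ at $P$ is Theorem \ref{KKS} applied to the cyclic quotient $\frac 1{r^2s}(1,ars-1)$, using hypothesis (b) that $A$ is analytically $\mathcal O(-1)$ at $P$; at every other point $A$ is invertible or dual invertible, so $\mathcal Ext^i(A,A)=0$ for $i>0$ there, the NC deformation is locally trivial, and $F$ is locally $A^{\oplus r^2s}$, in particular locally free or dual free. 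For (1) and (2) I would copy the proof of Theorem \ref{direct sum}: versality gives $\text{Hom}_X(F,A)\cong k$ and $\text{Ext}^1_X(F,A)=0$; Lemma \ref{local vanishing} gives $\mathcal Ext^i(F,A)=0$ for $i>0$; hence $\text{Ext}^2_X(F,A)\cong H^2(X,\mathcal Hom(F,A))$ vanishes by Serre duality, since $F$ is a successive extension of $A$ and $H^0(X,\omega_X)\cong H^2(X,\mathcal O_X)^*=0$; so $R\text{Hom}_X(F,A)\cong k$, and because $F$ is a successive self-extension of $A$ the complex $R\text{Hom}_X(F,F)$ is filtered by copies of $R\text{Hom}_X(F,A)\cong k$, whence $\text{Ext}^i_X(F,F)=0$ for $i>0$, and the deformation proposition of Section 2 then yields the locally free or dual free extension $\mathcal F$ over a smaller disk.

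The heart of the proof is (3). First I would show $R\text{Hom}_Y(\mathcal F\vert_Y,E_i)\cong k^{\oplus r}$ for each $i$. Since $\mathcal E_i\vert_X\cong A^{\oplus r}$ by Theorem \ref{Hacking'}, part (1) gives $R\text{Hom}_X(F,\mathcal E_i\vert_X)\cong k^{\oplus r}$, concentrated in degree $0$; as $\mathcal F$ is locally free or dual free and $\mathcal E_i$ is maximally Cohen--Macaulay, Lemma \ref{local vanishing} gives $\mathcal Ext^{>0}_{\mathcal X}(\mathcal F,\mathcal E_i)=0$ and $\mathcal Hom_{\mathcal X}(\mathcal F,\mathcal E_i)$ is flat over $\Delta$, while $\mathcal F\vert_Y$ is locally free because $Y$ is smooth, so upper semicontinuity gives the assertion on $Y$. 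Next, $\mathcal F\vert_Y$ is slope semistable of slope $\mu(A)$: $F$ is a successive extension of the divisorial sheaf $A$, hence semistable by the maximal-subsheaf argument in the proof of Theorem \ref{direct sum}, and semistability is open in the family; and each $E_i$ is slope stable of the same slope $\mu(A)$ by $E_i\vert_X\cong A^{\oplus r}$ together with \cite{H}.

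Then I would build a filtration of $\mathcal F\vert_Y$: choose bases $(f_{i,1},\dots,f_{i,r})$ of $\text{Hom}_Y(\mathcal F\vert_Y,E_i)$ for all $i$, arrange all $rs$ resulting maps in some order, and successively pass to the kernel of the accumulated family. At each stage the new map is nonzero on the current kernel --- if it vanished there it would factor through the sum of the $E_{i'}$ already used, and then, since $\text{Hom}_Y(E_{i'},E_i)=0$ for $i'\ne i$ and $\text{End}_Y(E_i)\cong k$, it would be a linear combination of earlier maps into the same $E_i$ --- and the successive kernels remain semistable of slope $\mu(A)$, so each new map is surjective in codimension $1$ onto $E_i$ with kernel again of slope $\mu(A)$. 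As the total rank consumed is $\sum_i r\cdot r=r^2s=\text{rank}(\mathcal F\vert_Y)$, the process exhausts $\mathcal F\vert_Y$, and its graded pieces are subsheaves of the $E_i$ agreeing with them in codimension $1$. Comparing Euler characteristics, $\chi(\mathcal F\vert_Y)=\chi(F)=r^2s\,\chi(A)=r\sum_i\chi(\mathcal E_i\vert_X)=\sum_i r\,\chi(E_i)$ forces every graded piece to equal the whole $E_i$; finally $\text{Ext}^1_Y(E_j,E_i)=0$ for all $i,j$ (orthogonality for $i\ne j$, exceptionality for $i=j$) splits the filtration, so $\mathcal F\vert_Y\cong\bigoplus_{i=1}^s E_i^{\oplus r}$. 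Then $\text{End}_Y(\mathcal F\vert_Y)\cong\prod_{i=1}^s\text{Mat}(k,r)=\text{Mat}(k,r)^{\times s}$ by orthogonality, and since the deformation proposition gives $\dim\text{End}(F)=\dim\text{End}(\mathcal F\vert_Y)$, the algebra $\text{End}(F)$ deforms to $\text{Mat}(k,r)^{\times s}$.

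The step I expect to be the main obstacle is the filtration construction: one must check stage by stage that the accumulated maps stay surjective in codimension one onto a direct sum of the $E_i$ with exactly the right multiplicities, and this is precisely where the mutual orthogonality of the $E_i$ from Theorem \ref{Hacking'} and the constancy of slopes along the deformation are indispensable. A secondary point needing care is the semicontinuity input $R\text{Hom}_Y(\mathcal F\vert_Y,E_i)\cong k^{\oplus r}$, since $\mathcal F$ and the $\mathcal E_i$ are in general only dual free and not perfect complexes, so the flatness and base-change bookkeeping must be carried out by hand using Lemma \ref{local vanishing}.
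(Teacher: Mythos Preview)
Your proposal is correct and follows essentially the same approach as the paper's proof: parts (0)--(2) are reduced to the $s=1$ argument of Theorem \ref{direct sum}, and part (3) is handled by upper semi-continuity to get $R\text{Hom}_Y(\mathcal F\vert_Y,E_i)\cong k^{\oplus r}$, followed by the filtration-by-successive-kernels argument (using slope semistability of $\mathcal F\vert_Y$, stability of the $E_i$, and $\text{Hom}_Y(E_i,E_j)=0$ for $i\ne j$), the Euler-characteristic comparison, and the vanishing $\text{Ext}^1_Y(E_i,E_j)=0$ to split the filtration. Your write-up is in fact more explicit than the paper's about the independence step in the filtration and about the base-change justification for semi-continuity; the only minor slip is notational---you write ``$E_i\vert_X\cong A^{\oplus r}$'' where you mean $\mathcal E_i\vert_X$.
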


\begin{proof}
(0) is already known.
The proofs (1) and (2) are the same as in Theorem \ref{direct sum}.

(3) We modify the proof of Theorem \ref{direct sum}.
We have again $\mathbb R\text{Hom}_Y(\mathcal F \otimes \mathcal O_Y,\mathcal E_i \otimes \mathcal O_Y) \cong k^{\oplus r}$. 
We prove that a natural homomorphism 
\[
\mathcal F \otimes \mathcal O_Y \to 
\bigoplus_{i=1}^s \text{Hom}(\mathcal F \otimes \mathcal O_Y,\mathcal E_i \otimes \mathcal O_Y)^* 
\otimes \mathcal E_i \otimes \mathcal O_Y \cong \bigoplus_{i=1}^s (\mathcal E_i \otimes \mathcal O_Y)^{\oplus r}
\]
is an isomorphism.

Since $\text{Hom}(\mathcal E_i \otimes \mathcal O_Y,\mathcal E_j \otimes \mathcal O_Y) \cong 0$ for $i \ne j$, 
we have again a filtration of $\mathcal F \otimes \mathcal O_Y$ such that $\text{Gr}(\mathcal F \otimes \mathcal O_Y)$ 
is a subsheaf of $\bigoplus_{i=1}^s (\mathcal E_i \otimes \mathcal O_Y)^{\oplus r}$ whose cokernel is supported at isolated points.
We have $\chi(\mathcal F \otimes \mathcal O_Y) = \chi(F) = r^2s \chi(A) = 
\sum_{i=1}^s r \chi(\mathcal E_i \otimes \mathcal O_Y)$, hence 
$\text{Gr}(\mathcal F \otimes \mathcal O_Y) \cong \bigoplus_{i=1}^s (\mathcal E_i \otimes \mathcal O_Y)^{\oplus r}$.
Since $\text{Ext}^1(\mathcal E_i \otimes \mathcal O_Y,\mathcal E_j \otimes \mathcal O_Y) = 0$ for all $i,j$, 
we obtain our assertion.
\end{proof}

\begin{Rem}
The referee remarked that Lemma 5.1 and Theorems 5.2 and 5.3 hold true also in the case 
where $X$ has Gorenstein singularities $\frac 1s(1,s-1)$.
This can be considered as the case where $r = a = 1$, and the same proofs work.
Lemma 5.1 is reduced to the well-known simultaneous resolution of Du Val singularities.
A divisorial sheaf on $X$ extends to its smoothing as a divisorial sheaf in Theorem 5.2 because we have a global assumption
$H^2(X, \mathcal O_X) = 0$.
\end{Rem}

%%%%%%%%%%%%%%%%%%%%%%%
\section{Example: $\mathbf Q$-Gorenstein smoothings of weighted projective planes}

We consider $\mathbf Q$-Gorenstein smoothings of weighted projective planes to del Pezzo surfaces
as examples of the main results. 
First we consider $\mathbf Q$-Gorenstein smoothings to $\mathbf P^2$.

\begin{Thm}\label{Pa1a2a3}
Let $a_1,a_2,a_3$ be positive integers satisfying a Markov equation $a_1^2 +a_2^2+a_3^2=3a_1a_2a_3$.
Let $X = \mathbf P(a_1^2,a_2^2,a_3^2)$ be a weighted projective plane, and 
let $D^b(X) = \langle L_1,L_2,L_3 \rangle = \langle \overline F_1,\overline F_2,\overline F_3 \rangle$ 
be the semi-orthogonal decomposition explained in \S 3, 
where $\text{rank}(F_i) = a_i^2$.
Then under a $\mathbf Q$-Gorenstein smoothing $\mathcal X \to \Delta$ of $X$ to $\mathbf P^2$, 
$F_1,F_2,F_3$ deform to a direct sum $E_1^{\oplus a_1}, E_2^{\oplus a_2}, E_3^{\oplus a_3}$
for a full exceptional collection of vector bundles $(E_1,E_2,E_3)$ on $\mathbf P^2$ 
such that $\text{rank}(E_i) = a_i$.
\end{Thm}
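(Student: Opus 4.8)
The plan is to apply the Wahl‑case main theorems, Theorem~\ref{Hacking} and Theorem~\ref{direct sum}, separately at the three cyclic quotient singularities $P_1,P_2,P_3$ of $X=\mathbf P(a_1^2,a_2^2,a_3^2)$, and then to upgrade the resulting data on the general fibre to a \emph{full} exceptional collection on $\mathbf P^2$ using the classification of exceptional collections there. First I would check the hypotheses. We have $H^p(X,\mathcal O_X)=0$ for $p>0$. The point $P_i$ is the singularity $\tfrac{1}{a_i^2}(a_j^2,a_k^2)$ with $\{i,j,k\}=\{1,2,3\}$; since a Markov triple is pairwise coprime and $a_1^2+a_2^2+a_3^2=3a_1a_2a_3$ forces $a_j^2+a_k^2\equiv0\pmod{a_i}$, after normalisation $P_i$ is a Wahl singularity of type $\tfrac1{a_i^2}(1,a_ib_i-1)$ with $0<b_i<a_i$ (this is also contained in \cite{HaP}), so condition~(a) of Theorem~\ref{Hacking} holds at $P_i$ with $r=a_i$, $s=1$. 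By the construction recalled in \S3 (with the integer $m$ chosen there, so that $a_1^2\mid m$, $m\equiv1$ modulo $a_3^2$, etc.), $F_i$ is the versal NC deformation of the divisorial sheaf $A_i:=L_i$, and $A_i$ is analytically $\mathcal O(-1)$ along a toroidal axis through $P_i$ while being invertible or dual invertible at the remaining two singular points; this is condition~(b), and condition~(c) is the given smoothing. Replacing $t$ by a common root $t^N$ with $N$ divisible by the base‑change degrees demanded at each $P_i$, and shrinking $\Delta$ finitely many times, the general fibre $Y=f^{-1}(t)$ remains $\mathbf P^2$.

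Next I would run the machinery at each $P_i$. Theorem~\ref{Hacking} produces a maximally Cohen--Macaulay sheaf $\mathcal E_i$ on $\mathcal X$ with $\mathcal E_i\vert_X\cong A_i^{\oplus a_i}$ and $E_i:=\mathcal E_i\vert_Y$ an exceptional vector bundle of rank $a_i$ on $\mathbf P^2$; since these sheaves are built by weighted blow‑ups at the \emph{distinct} points $P_i$, the constructions take place in disjoint neighbourhoods and all the $\mathcal E_i$ (and the $\mathcal F_i$ below) live on one and the same $\mathcal X$. Theorem~\ref{direct sum} then gives $\text{Ext}^p_X(F_i,F_i)=0$ for $p>0$, so after a further shrinking of $\Delta$ the sheaf $F_i$ deforms to a locally free or dual free sheaf $\mathcal F_i$ on $\mathcal X$ with $\mathcal F_i\vert_X\cong F_i$ and $\mathcal F_i\vert_Y\cong E_i^{\oplus a_i}$; moreover $\text{End}(F_i)$, the Kalck--Karmazyn algebra of Theorem~\ref{KKS}, deforms to $\text{Mat}(a_i,k)$. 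This already yields the asserted behaviour ``$F_i$ deforms to $E_i^{\oplus a_i}$'', with $\text{rank}(E_i)=a_i$.

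It remains to check that $(E_1,E_2,E_3)$ is a \emph{full} exceptional collection of vector bundles on $\mathbf P^2$. Each $E_i$ is exceptional. From the semi‑orthogonal decomposition $D^b(X)=\langle F_1,F_2,F_3\rangle$ we get $R\text{Hom}_X(F_i,F_j)\cong0$ for $i>j$; since $\mathcal F_i,\mathcal F_j$ are flat over $\Delta$, upper semi‑continuity of $\dim\text{Ext}^p$ in the family shows that the locus in $\Delta$ where $R\text{Hom}\bigl(\mathcal F_i\vert_{f^{-1}(t)},\mathcal F_j\vert_{f^{-1}(t)}\bigr)$ is nonzero is closed and avoids $0$, hence finite, so after shrinking $R\text{Hom}_Y(\mathcal F_i\vert_Y,\mathcal F_j\vert_Y)\cong0$, and therefore $R\text{Hom}_{\mathbf P^2}(E_i,E_j)\cong0$ for $i>j$. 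Thus $(E_1,E_2,E_3)$ is an exceptional collection of vector bundles on $\mathbf P^2$ of length $3$; its Euler‑form Gram matrix $\bigl(\chi(E_i,E_j)\bigr)$ is upper triangular with $1$'s on the diagonal, so in particular the classes $[E_i]$ are linearly independent in $K_0(\mathbf P^2)$, and since on $\mathbf P^2$ every exceptional collection of maximal length $3$ is full (\cite{R}; equivalently, $D^b(\mathbf P^2)$ admits no phantom admissible subcategory) we conclude $D^b(\mathbf P^2)=\langle E_1,E_2,E_3\rangle$. As $\text{rank}(E_i)=a_i$ by construction and $(a_1,a_2,a_3)$ solves the Markov equation by hypothesis, this is consistent with \cite{R} Theorem~3.2.

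The step I expect to be the main obstacle is the last one: exceptionality and the semi‑orthogonality relations descend to the general fibre by purely homological deformation arguments, but \emph{generation} of $D^b(\mathbf P^2)$ is not a priori preserved under specialisation, so one genuinely needs the special structure of $\mathbf P^2$ --- the helix/mutation classification of exceptional collections, or equivalently the absence of phantoms --- to exclude the possibility that the deformed collection generates only a proper admissible subcategory. A subsidiary bookkeeping point is to perform all finite base changes and shrinkings of $\Delta$ simultaneously for the three singular points; this is harmless because finitely many such operations compose and the general fibre stays $\mathbf P^2$.
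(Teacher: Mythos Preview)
Your proposal is correct and follows the paper's argument closely for the first two steps: applying Theorems~\ref{Hacking} and~\ref{direct sum} at each Wahl singularity $P_i$ to produce the $E_i$ with $\mathcal F_i\vert_Y\cong E_i^{\oplus a_i}$, and deducing semi-orthogonality $R\text{Hom}(E_i,E_j)\cong 0$ for $i>j$ from $R\text{Hom}(F_i,F_j)\cong 0$ by upper semi-continuity.

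The only genuine difference is in the fullness argument. You invoke the structural fact that any length-$3$ exceptional collection on $\mathbf P^2$ is full (via \cite{R} or the absence of phantoms). The paper instead argues directly: the invertible sheaves $\mathcal O_X(ma_1a_2a_3)$ deform to $\mathcal O_{\mathbf P^2}(m)$ along $\mathcal X\to\Delta$; since the $F_i$ generate $D^b(X)$ they in particular generate each $\mathcal O_X(ma_1a_2a_3)$, and the finite generation data deforms, so the $E_i$ generate each $\mathcal O_{\mathbf P^2}(m)$, hence all of $D^b(\mathbf P^2)$. The paper's route is more self-contained and transports verbatim to the del Pezzo case (using $\mathcal O_X(mK_X)\rightsquigarrow\mathcal O_Y(mK_Y)$), whereas your route needs as external input a no-phantom result specific to the target surface. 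On the other hand, your argument avoids having to justify that ``generation deforms'', which the paper treats rather briskly.
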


\begin{proof}
We note that the $3$ coordinate points of $X$ are automatically quotient singularities of the type $\frac 1{r^2}(1,ar-1)$ or possibly smooth.
By Theorem \ref{direct sum}, $F_i$ deforms to $E_i^{\oplus a_i}$ for an exceptional vector bundle $E_i$.
Since $\mathbb R\text{Hom}(F_i,F_j) \cong 0$ for $i > j$, it follows that 
$\mathbb R\text{Hom}(E_i,E_j) \cong 0$ for $i > j$ by the upper semi-continuity theorem.
Note that the same conclusion holds even if one of the $a_i$ are equal to $1$.

We will prove that the exceptional collection $(E_1,E_2,E_3)$ is full.
The following proof is due to the suggestion of the referee.
The point is that the fullness is an open property because the supports of coherent sheaves are closed.
Let $\mathcal F_i$ be the locally free or dual free sheaves on $\mathcal X$ obtained in Theorem 4.3
such that 
$\mathcal F_i \otimes \mathcal O_X \cong F_i$ and $\mathcal F_i \otimes \mathcal O_Y \cong E_i^{\oplus a_i}$
for $i = 1,2,3$.
Let $\mathcal R_i = \text{End}(\mathcal F_i) = f_*\mathcal End(\mathcal F_i)$.
It is a free $\mathcal O_{\Delta}$-module.
We note that there are no higher cohomologies because the $\mathcal F_i$ are pretilting.

Since $\mathcal F_i$ is flat over $\mathcal R_i$, there is an exact functor 
$\Phi_i: \text{mod}(\mathcal R_i) \to \text{coh}(\mathcal X)$ of $\mathcal O_{\Delta}$-linear abelian categories given by
$\Phi_i(\bullet) = \bullet \otimes_{\mathcal R_i} \mathcal F_i$.
We denote its derived functor also by $\Phi_i: D^b(\text{mod}(\mathcal R_i)) \to D^b(\text{coh}(\mathcal X))$.
It has a right adjoint functor $\Psi_i: D^b(\text{coh}(\mathcal X)) \to D^b(\text{mod}(\mathcal R_i))$ defined by 
$\Psi_i(\bullet) = R\text{Hom}(\mathcal F_i, \bullet)$.
Indeed we have $\text{Hom}_{\mathcal X}(a \otimes_{\mathcal R_i}^L \mathcal F_i, b) 
\cong \text{Hom}_{\mathcal R_i}(a, R\text{Hom}(\mathcal F_i,b))$. 
We have $\Psi_i\Phi_i \cong \text{Id}_{D^b(\text{mod}(\mathcal R_i))}$, and
the functors $\Phi_i$ are fully faithful, because
$\text{Hom}_{\mathcal R_i}(a,b) \cong \text{Hom}_{\mathcal R_i}(a,\Psi_i\Phi_i(b)) \cong 
\text{Hom}_{\mathcal X}(\Phi_i(a),\Phi_i(b))$.

Let $\mathcal C \subset D^b(\text{coh}(\mathcal X))$ be the right orthogonal complement of the $\mathcal F_i$,
i.e., $\mathcal C = \{x \in D^b(\text{coh}(\mathcal X)) \mid R\text{Hom}(\mathcal F_i, x) \cong 0 \,\, \forall i\}$. 
For $x \in D^b(\text{coh}(\mathcal X))$, we define the $x^{(i)} \in D^b(\text{coh}(\mathcal X))$ for $0 \le i \le 3$ 
inductively as follows.
Let $x^{(3)} = x$, and we define $x^{(i-1)}$ from $x^{(i)}$ by a distinguished triangle
\[
\Phi_i\Psi_i(x^{(i)}) \to x^{(i)} \to x^{(i-1)} \to \Phi_i\Psi_i(x^{(i)})[1].
\]
We claim that $\Psi_j(x^{(i)}) \cong 0$ for $j > i$.
Indeed, if we apply $\Psi_i$ to the above distinguished triangle, then we have $\Psi_i(x^{(i-1)}) \cong 0$.
If we apply $\Psi_j$ for $j > i$, then we have $\Psi_j(x^{(i-1)}) \cong \Psi_j(x^{(i)}) \cong 0$ because
$R\text{Hom}(\mathcal F_j, \mathcal F_i) \cong 0$.
Therefore we have $x^{(0)} \in \mathcal C$.

Let $t$ be a coordinate on $\Delta$.
We have an exact sequence $0 \to \mathcal O_{\mathcal X} \to \mathcal O_{\mathcal X} \to \mathcal O_X \to 0$,
where the first arrow is given by the multiplication of $t$.
Let us fix $c \in \mathcal C$.
By tensoring with the above exact sequence, we obtain a distinguished triangle
\[
c \to c \to c_0 \to c[1]
\]
where $c_0 = c \otimes_{\mathcal O_{\mathcal X}}^L \mathcal O_X \in D^b(\text{coh}(X))$.
Since $c_0 \in \mathcal C$, we have
\[
0 \cong R\text{Hom}_{\mathcal X}(\mathcal F_i, c_0) \cong R\text{Hom}_X(F_i, c_0)
\]
for all $i$.
Here we used $\omega_{\mathcal X} \otimes^L_{\mathcal O_{\mathcal X}} \mathcal O_X \cong \omega_X$, 
which is obtained by taking $\omega_{\mathcal X} \otimes^L_{\mathcal O_{\mathcal X}}$ to the above 
exact sequence, at the points where $\mathcal F_i$ is dual free.

Hence $c_0 \cong 0$, because the $F_i$ generate $D^b(\text{coh}(X))$.
It follows that $t: H^j(c) \to H^j(c)$ are bijective for all $j$.
Since the $H^j(c)$ are coherent sheaves on $\mathcal X$, their supports are closed subsets which do not
intersect $X$.
Since $c$ is a bounded complex, we conclude that $c \cong 0$ if we replace $\Delta$ by a smaller disk if necessary.

Let $\mathcal O_{\mathcal X}(1)$ be a divisorial sheaf on $\mathcal X$ such that 
$\mathcal O_{\mathcal X}(1) \otimes_{\mathcal O_{\mathcal X}} \mathcal O_X \cong \mathcal O_X(a_1a_2a_3)$
and $\mathcal O_{\mathcal X}(1) \otimes_{\mathcal O_{\mathcal X}} \mathcal O_Y \cong \mathcal O_{\mathbf P^2}(1)$.
We set $c_k = \mathcal O_{\mathcal X}(k)^{(0)} \in \mathcal C$ for $k = -2,-1,0$.
By shrinking $\Delta$ three times, we may assume that $c_k \cong 0$ for all $k$.

For any $y \in D^b(\text{coh}(Y))$, if
\[
0 \cong R\text{Hom}_{\mathcal X}(\mathcal O_{\mathcal X}(k), y) \cong R\text{Hom}_Y(\mathcal O_{\mathbf P^2}(k), y),
\]
then we have $y \cong 0$, because the $\mathcal O_{\mathbf P^2}(k)$ for $k=-2,-1,0$ 
generate $D^b(\text{coh}(\mathbf P^2))$.
Since $c_k \cong 0$ for all $k$, we deduce that, if 
$R\text{Hom}_{\mathcal X}(\mathcal F_i, y) \cong 0$ for all $i$, 
then $y \cong 0$.
Now assume that $R\text{Hom}_Y(E_i, y) \cong 0$ for all $i$.
Then we have
\[
R\text{Hom}_{\mathcal X}(\mathcal F_i, y) \cong R\text{Hom}_Y(E_i^{\oplus a_i}, y) \cong 0
\]
hence $y \cong 0$.
This completes the proof of the fulness of the $E_i$.
\end{proof}

Hacking-Prokhorov \cite{HaP} classified all normal projective surfaces $X$ having only quotient singularities 
such that $-K_X$ is ample, Picard number $\rho(X) = 1$, and that $X$ has a $\mathbf Q$-Gorenstein smoothing.
We consider the case of weighted projective planes.

\begin{Thm}
Let $X = \mathbf P(s_1a_1^2,s_2a_2^2,s_3a_3^2)$ be a weighted projective plane which has 
a $\mathbf Q$-Gorenstein smoothing $\mathcal X \to \Delta$ to a del Pezzo surface $Y$
as classified in \cite{HaP} Theorem 4.1, where $a_1,a_2,a_3$ are Cartier indexes of the canonical divisor 
$K_{\mathcal X}$ at the singular points.
Let $D^b(X) = \langle L_1,L_2,L_3 \rangle = \langle \overline F_1,\overline F_2,\overline F_3 \rangle$ 
be the semi-orthogonal decomposition explained in \S 3, 
where $\text{rank}(F_i) = s_ia_i^2$.
Then under the $\mathbf Q$-Gorenstein smoothing, 
$F_i$ deforms to a direct sum $\bigoplus_{j=1}^{s_i} E_{i,j}^{\oplus a_i}$ 
for a $3$-block full exceptional collection of vector bundles 
\[
(E_{1,1},\dots,E_{1,s_1};E_{2,1}, \dots, E_{2,s_2}; E_{3,1}, \dots, E_{3,s_3})
\]
on $Y$ such that $\text{rank}(E_{i,j}) = a_i$.
\end{Thm}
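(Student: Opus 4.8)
The plan is to run the argument of the $\mathbf P^2$-case theorem block by block, using Theorems~\ref{Hacking'} and~\ref{direct sum'}, and then to upgrade the resulting three blocks to a single full $3$-block exceptional collection.

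First I would apply the main theorems to each singular point separately. By the Hacking--Prokhorov description \cite{HaP}, each $P_i\in X=\mathbf P(s_1a_1^2,s_2a_2^2,s_3a_3^2)$ is a class~T cyclic quotient singularity of type $\tfrac1{a_i^2s_i}(1,\,a_ia_is_i-1)$ (up to the normalization of the main theorem), with Cartier index $a_i$ and Milnor number $s_i-1$, and $H^p(X,\mathcal O_X)=0$ for $p>0$. The divisorial sheaf $L_i$ of \S3.3 whose versal NC deformation is $F_i$ is, by its very construction, analytically a toroidal coordinate axis at $P_i$ and locally invertible or dual invertible at $P_j$ for $j\ne i$; hence the hypotheses (a)--(c) of Theorem~\ref{Hacking'} hold for each triple $(X,L_i,P_i)$. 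Replacing $\Delta$ by a common finite base change $t\mapsto t^N$ and shrinking it so that all three statements hold at once, Theorem~\ref{direct sum'} produces locally free or dual free sheaves $\mathcal F_i$ on $\mathcal X$ with $\mathcal F_i\vert_X\cong F_i$ and
\[
\mathcal F_i\vert_Y\cong\bigoplus_{j=1}^{s_i}E_{i,j}^{\oplus a_i},
\]
where the $E_{i,j}$ are exceptional vector bundles of rank $a_i$ on $Y$ that are mutually orthogonal inside each block, i.e. $R\mathrm{Hom}_Y(E_{i,j},E_{i,j'})\cong 0$ for $j\ne j'$, and $\mathrm{End}(F_i)$ deforms to $\mathrm{Mat}(k,a_i)^{\times s_i}$.

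Next I would glue the blocks. From the semi-orthogonal decomposition $D^b(X)=\langle F_1,F_2,F_3\rangle$ we get $R\mathrm{Hom}_X(F_i,F_j)\cong 0$ for $i>j$; since $\mathcal F_i,\mathcal F_j$ are flat over $\Delta$ with $\mathcal F_\bullet\vert_X=F_\bullet$, upper semi-continuity gives $R\mathrm{Hom}_Y(\mathcal F_i\vert_Y,\mathcal F_j\vert_Y)\cong 0$, hence $R\mathrm{Hom}_Y(E_{i,j},E_{i',j'})\cong 0$ whenever $i>i'$. Combined with the within-block vanishing this shows that
\[
(E_{1,1},\dots,E_{1,s_1};\,E_{2,1},\dots,E_{2,s_2};\,E_{3,1},\dots,E_{3,s_3})
\]
is a $3$-block exceptional collection of vector bundles on $Y$ with block ranks $(a_1,a_2,a_3)$, and by \cite{KN} the data $(a_i),(s_i)$ then automatically satisfy the associated Markov relation. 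It remains to prove that this collection is full. Ordering it as above, the Euler-pairing Gram matrix of the $E_{i,j}$ is block upper triangular with identity diagonal blocks, hence unipotent, so the classes $[E_{i,j}]$ span a free subgroup of $K_0(D^b(Y))$ of rank $s_1+s_2+s_3$, and the admissible subcategory $\mathcal A\subseteq D^b(Y)$ they generate has $\mathrm{rk}\,K_0(\mathcal A)\ge s_1+s_2+s_3$. On the other hand $\mathrm{rk}\,K_0(D^b(Y))=\chi_{\mathrm{top}}(Y)=12-K_Y^2$ because $Y$ is a rational surface, and I would compute $\chi_{\mathrm{top}}(Y)$ from the smoothing: a neighborhood of each $P_i$ is contractible, its Milnor fiber has Euler characteristic $s_i$ (Milnor number $s_i-1$), and $\chi_{\mathrm{top}}(X)=3$, so the cut-and-paste formula gives $\chi_{\mathrm{top}}(Y)=3-3+\sum_i s_i=s_1+s_2+s_3$. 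Therefore $K_0(\mathcal A)$ and $K_0(D^b(Y))$ have equal rank, so in the semi-orthogonal decomposition $D^b(Y)=\langle\mathcal A,\mathcal A^{\perp}\rangle$ the piece $\mathcal A^{\perp}$ has $K_0(\mathcal A^{\perp})\otimes\mathbf Q=0$; since $D^b$ of a del Pezzo surface has no (quasi-)phantom subcategory, $\mathcal A^{\perp}=0$, i.e. the collection is full.

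The step I expect to be the main obstacle is precisely the fullness argument, which couples a topological input (the Euler-characteristic surgery formula for $\mathbf Q$-Gorenstein, i.e. class~T, smoothings, together with the computation of the relevant Milnor fibers) with a categorical input (the absence of phantom subcategories on del Pezzo surfaces). If one prefers to avoid the phantom issue, the alternative is to invoke directly the dictionary between the Hacking--Prokhorov classification \cite{HaP} of the surfaces $X$ and the Karpov--Nogin classification \cite{KN} of full $3$-block exceptional collections on del Pezzo surfaces, and to recognize the collection constructed above as one of the latter. Everything preceding the fullness claim is a routine combination of Theorems~\ref{Hacking'}--\ref{direct sum'} with flatness and upper semi-continuity.
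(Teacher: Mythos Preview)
Your argument is correct and matches the paper up through the semi-orthogonality of the three blocks: applying Theorems~\ref{Hacking'} and~\ref{direct sum'} at each $P_i$, then using upper semi-continuity on $R\mathrm{Hom}_X(F_i,F_j)\cong 0$ for $i>j$ is exactly what the paper does.

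The fullness step, however, is handled differently. The paper bypasses both the Euler-characteristic surgery and the phantom question entirely: it simply observes that the invertible (or dual invertible) sheaves $\mathcal O_X(mK_X)$ lie in $\langle F_1,F_2,F_3\rangle$ and deform to $\mathcal O_Y(mK_Y)$, so the deformed sheaves $\mathcal F_i\vert_Y$, and hence the $E_{i,j}$, generate all $\mathcal O_Y(mK_Y)$; since the pluricanonical line bundles generate $D^b(Y)$ for a del Pezzo surface, fullness follows. This is shorter and self-contained, requiring no external input about $K$-theory ranks or the nonexistence of phantoms. Your route is also valid (and your Euler-characteristic count $\chi_{\mathrm{top}}(Y)=s_1+s_2+s_3$ is correct), but it leans on the no-phantom theorem for del Pezzo surfaces, which is a substantially heavier result than anything the paper needs. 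The alternative you mention at the end---matching against the Karpov--Nogin list---would likewise work but is less direct than the paper's deformation-of-generators argument.
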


\begin{proof}
The proof similar to the previous theorem.
We note that the $3$ coordinate points of $X$ are automatically quotient singularities of the type $\frac 1{r^2s}(1,ars-1)$ or 
possibly $\frac 1s(1,s-1)$.
By the main theorem, $F_i$ deforms to $\bigoplus_{j=1}^{s_i} E_{i,j}^{\oplus a_i}$ for exceptional vector bundles $E_{i,j}$
which are mutually orthogonal.
We have again 
$\mathbb R\text{Hom}(E_{i,j},E_{i',j'}) \cong 0$ for $i > i'$.
The proof of the fullness is similar to that of Theorem \ref{Pa1a2a3}.
\end{proof}

We expect that these in the latter theorem coincide with Karpov-Nogin blocks (\cite{KN}).
More generally, toric surfaces in \cite{HaP} Theorem 4.1 can be treated similarly, because the 
construction of pretilting bundles works similarly as in the case of weighted projective planes.

%%%%%%%%%%%%%%%%%%%%%%%%%%%%%%%%%%%%%%%%%%
%%%%%%%%%%%%%%%%%%%%%%%%%%%%%%%%%%%%%%%%%%
%%%%%%%%%%%%%%%%%%%%%%%%%%%%%%%%%%%%%%%%%%

Graduate School of Mathematical Sciences, University of Tokyo,
Komaba, Meguro, Tokyo, 153-8914, Japan. 

kawamata@ms.u-tokyo.ac.jp

\end{document}